\newcounter{main}
\newtheorem{prop}[main]{Proposition}
\newtheorem{cor}[main]{Corollary}
\newtheorem{thm}[main]{Theorem}
\newtheorem{lem}[main]{Lemma}
\newtheorem{prob}[main]{Problem}
\theoremstyle{definition}
\newtheorem{dfn}[main]{Definition}
\newtheorem{trm}[main]{Terminology}
\newtheorem{exa}[main]{Example}
\theoremstyle{remark}
\newtheorem{rem}[main]{Remark}
\newcommand{\R}{{\mathbb{R}}}
\newcommand{\N}{{\mathbb{N}}}
\newcommand{\C}{{\mathbb{C}}}
\newcommand{\id}{{\mathrm{id}}}
\newcommand{\asrt}{{\operatorname{asrt}}}
\newcommand{\Cat}[1]{\mathsf{#1}}
\newcommand{\op}{\textsf{op}}
\newcommand{\vN}{\Cat{vN}}
\newcommand{\aandthen}{\mathbin{\tilde{*}}}
\newcommand{\ceil}[1]{{\lceil #1 \rceil}}
\newcommand{\after}{\mathbin{\circ}}
\newcommand{\Eff}[1]{[0,1]_{#1}}
\newcommand{\sa}[1]{{#1}_{\mathrm{sa}}}
\newcommand{\pos}[1]{{#1}_{+}}
\newcommand{\uwlim}{\qopname\relax m{uwlim}}
\DeclarePairedDelimiter\floor{\lfloor}{\rfloor}
\DeclareMathOperator{\tr}{tr}
\begin{document}

\title{A universal property for sequential measurement}

\date{\today}

\author{Abraham Westerbaan}
\email{bram@westerbaan.name}
\author{Bas Westerbaan}
\email{bas@westerbaan.name}
\affiliation{Radboud Universiteit Nijmegen}

\maketitle
We study the \emph{sequential product}
\cite{gudder2,gudder3,gudder4},
the operation $p*q = \sqrt{p}q\sqrt{p}$
on the set of effects, $[0,1]_\mathscr{A}$,
of a von Neumann algebra~$\mathscr{A}$
that represents sequential measurement
of first~$p$ and then~$q$.
In~\cite{gudder}
Gudder and Lat\'emoli\`ere give
a list of axioms
based on physical grounds
that completely determines
the sequential product on
a von Neumann algebra of type~I,
that is, a von Neumann algebra~$\mathscr{B}(\mathscr{H})$
of all bounded operators on some Hilbert space~$\mathscr{H}$.
In this paper
we give a list of axioms
that completely determines
the sequential product
on all von Neumann algebras simultaneously,
see Thm~\ref{thm:main}.

These axioms
may be formulated
in purely categorical terms
(although we do not pursue this here,
see also Remark~\ref{rem:cats}).
In this way
this paper contributes to the larger program~\cite{nd,telos,effectusintro}
to identify structure
in the category of von Neumann algebras
with completely positive normal linear contractions
to interpret
the  constructs
in a programming language
designed for a quantum computer:
with the sequential product
one can interpret
measurement.\cite{telos,effectusintro}

Our axioms for the sequential product
are based on the following observations.
Given a von Neumann algebra~$\mathscr{A}$
and~$p\in [0,1]_\mathscr{A}$
the expression $\sqrt{p}a\sqrt{p}$
makes sense for all~$a\in \mathscr{A}$
(and not only for~$a\in [0,1]_\mathscr{A}$).
The resulting map~$\asrt_p\colon \mathscr{A}\to\mathscr{A}$
(so $\asrt_p(a)=\sqrt{p}a\sqrt{p}$)
factors as
\begin{equation*}
\xymatrix@C=7em{
\mathscr{A} 
\ar[r]^-{\pi\colon a\mapsto \ceil{p}a \ceil{p}}
&
\ceil{p}\mathscr{A} \ceil{p}
\ar[r]^-{c\colon a\mapsto \sqrt{p}a\sqrt{p}}
&	
\mathscr{A}
},
\end{equation*}
where $\ceil{p}$ is the least projection above~$p$.

(Roughly speaking,
the von Neumann algebra $\ceil{p}\mathscr{A}\ceil{p}$
represents the subtype of~$\mathscr{A}$ in which the predicate~$p$ holds.
The map~$c$ is simply the restriction of~$\asrt_p$ 
to~$\ceil{p}\mathscr{A}\ceil{p}$,
while~$\pi$
is the map which forgets that~$p$ holds.
The map~$c$
is a more sharply typed version of sequential product than~$\asrt_p$
--- 
much in the same way that the absolute value
on the reals is more sharply described
as a map~$\R\to [0,\infty)$ 
than as a map~$\R\to\R$.)

The maps~$c$ and~$\pi$ have a universal property:
$c$ is a \emph{compression of~$p$}
and~$\pi$ is a \emph{corner of~$\ceil{p}$}
(see Definition~\ref{dfn:main}).
Our first axiom for the sequential product~$(p,q)\mapsto p*q$
will be  that  $p*(-) = \tilde \pi \circ \tilde c$
where~$\tilde \pi$ is a corner of~$\ceil{p}$
and~$\tilde c$ is a compression of~$p$.
Somewhat to our surprise,
while~$\tilde\pi$ 
	and~$\tilde c$ are unique up to unique isomorphism,
the composition~$\tilde \pi \circ \tilde c$ 
is not uniquely determined.
To mend this problem,
we add three more axioms.

\begin{trm}
Although we assume the reader is familiar with the basics
of the theory of von Neumann algebras,~\cite{sakai1971}
we have included the
relevant definitions
and a selection of useful results
in the appendix.

For brevity,
a linear map between von Neumann algebras,
which is  normal, completely positive,
and contractive,
will be called a \textbf{process}.
(This generalizes the standard notion of
\emph{quantum process} between finite-dimensional Hilbert
spaces to von Neumann algebras.)
\end{trm}

\begin{dfn}
\label{dfn:main}
Let~$\mathscr{A}$
and~$\mathscr{C}$ 
be a von Neumann algebras,
and let~$p\in \mathscr{A}$
with $0\leq p \leq 1$ be given.
\begin{enumerate}
\item
A map $\tilde c\colon \mathscr{C} \to\mathscr{A}$
is a \textbf{compression} of~$p$
if~$\tilde c$ is a process
with
$\tilde c(1)\leq p$, and~$\tilde c$
is final among such maps
in the sense that
for every von Neumann algebra~$\mathscr{B}$
and process~$f\colon \mathscr{B} \to \mathscr{A}$
    with $f(1) \leq p$
there is a unique process
$\bar{f} \colon \mathscr{B}
\to \mathscr{C}$
such that~$\tilde c \after \bar{f} = f$. 
\item
A map $\tilde \pi\colon \mathscr{A}\to 
\mathscr{C}$
is a \textbf{corner} of~$p$
if~$\tilde \pi$ is a process
with
$\tilde \pi(p)=\tilde \pi(1)$,
and~$\tilde \pi$ is
initial among such maps
in the sense that
for every von Neumann algebra~$\mathscr{B}$
and process $g\colon \mathscr{A} \to \mathscr{B}$
with~$g(p) = g(1)$
there is a unique 
process~$\bar{g} \colon 
\mathscr{C} \to \mathscr{B}$
with $\bar{g} \after \tilde \pi = g$.
\end{enumerate}
\end{dfn}
\begin{dfn}
\label{dfn:asp}
An \textbf{abstract sequential product}
is a 
family of operations
$\aandthen\colon [0,1]_\mathscr{A}\times
[0,1]_\mathscr{A}\to[0,1]_\mathscr{A}$,
where~$\mathscr{A}$ ranges over all von Neumann algebras,
which obeys the following axioms.
\begin{enumerate}
\item[Ax.1]
For every von Neumann algebra~$\mathscr{A}$
and~$p\in [0,1]_\mathscr{A}$,
there
is a compression~$\tilde c\colon \mathscr{C}\to \mathscr{A}$
of~$p$,
and corner $\tilde \pi\colon \mathscr{A}\to\mathscr{C}$
of~$\ceil{p}$
such that
for all~$q\in[0,1]_\mathscr{A}$,
\begin{equation*}
p \aandthen q \ =\  \tilde c(\tilde \pi(q)).
\end{equation*}

\item[Ax.2]
$p\aandthen(p\aandthen q)=(p\aandthen p)\aandthen q$
for every von Neumann algebra~$\mathscr{A}$
	and all $p,q\in[0,1]_\mathscr{A}$.

\item[Ax.3]
$f(p\aandthen q)=f(p)\aandthen f(q)$
for every multiplicative process
$f\colon \mathscr{A}\to\mathscr{B}$
and all $p,q\in [0,1]_\mathscr{A}$.

\item[Ax.4]
For every von Neumann algebra~$\mathscr{A}$
and
$p\in [0,1]_\mathscr{A}$,
and projections $e_1,e_2\in\mathscr{A}$,
\begin{equation*}
p \aandthen e_1 \ \leq\ 1-e_2
\quad\iff\quad p\aandthen e_2 \ \leq\ 1-e_1.
\end{equation*}
\end{enumerate}
\end{dfn}

Let us formulate the main result of this paper.
\begin{thm}\label{thm:main}
The
sequential product, $*$, given by
\begin{equation*}
p*q \ = \ \sqrt{p}q\sqrt{p}
\end{equation*}
for every von Neumann algebra~$\mathscr{A}$
	and $p,q\in[0,1]_\mathscr{A}$,
is the unique abstract sequential product
(see Definition~\ref{dfn:asp}).
\end{thm}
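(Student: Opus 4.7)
Each axiom for $p*q=\sqrt p q\sqrt p$ is verified directly. For (Ax.1) the maps $\pi(a)=\ceil{p}\,a\,\ceil{p}$ and $c(a)=\sqrt p\,a\,\sqrt p$ from the introduction satisfy $c(\pi(q))=\sqrt p\,q\,\sqrt p$ (using $\sqrt p\,\ceil{p}=\sqrt p$) and are shown to be a corner of $\ceil{p}$ and a compression of $p$ respectively by a standard check of the universal properties. For (Ax.2) both sides equal $pqp$ (using $\sqrt{p*p}=\sqrt{p^2}=p$). For (Ax.3) a multiplicative process is a normal $*$-homomorphism, hence preserves continuous functional calculus, so $f(\sqrt p)=\sqrt{f(p)}$ and $f(p*q)=f(p)*f(q)$. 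For (Ax.4) the fact that $a\le 1-e$ iff $ea=0$ (for an effect $a$ and projection $e$), combined with the identity $(e_2\sqrt p e_1)(e_2\sqrt p e_1)^*=e_2\sqrt p e_1\sqrt p e_2$, reduces the condition $\sqrt p e_1\sqrt p\le 1-e_2$ to the manifestly symmetric relation $e_2\sqrt p e_1=0$.

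\textbf{Uniqueness.} Let $\aandthen$ be any abstract sequential product. By (Ax.1) and the uniqueness of compressions and corners up to a unique $*$-isomorphism, there is a $*$-automorphism $\alpha_p$ of $\ceil{p}\mathscr{A}\ceil{p}$ with
\[
p\aandthen q \;=\; \sqrt p\,\alpha_p(\ceil{p}\,q\,\ceil{p})\,\sqrt p,
\]
and it suffices to show $\alpha_p=\id$. Applying (Ax.3) to the inclusion $\iota\colon\ceil{p}\mathscr{A}\ceil{p}\hookrightarrow\mathscr{A}$ (a multiplicative process) shows $\alpha_p$ depends only on the support subalgebra, so we may assume $\ceil{p}=1$. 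For $p=e$ a projection, $\sqrt p=e$ and (Ax.2) collapses to $\alpha_e^2=\alpha_e$ on $e\mathscr{A} e$, forcing $\alpha_e=\id$. For commutative $\mathscr{A}$, the choice $F=1-E$ in (Ax.4) for a projection $E$ yields $\alpha_p(E)\le E$, and by the symmetry of (Ax.4) also $E\le\alpha_p(E)$, hence $\alpha_p(E)=E$; by the spectral theorem $\alpha_p=\id$ on commutative von Neumann algebras. Applying (Ax.3) to the inclusion $W^*(p)\hookrightarrow\mathscr{A}$ of the commutative von Neumann algebra generated by $p$ then gives $\alpha_p|_{W^*(p)}=\id$, so in particular $\alpha_p$ fixes $p$, $\sqrt p$, and all spectral projections of $p$.

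The final step is to conclude $\alpha_p=\id$ on all of $\mathscr{A}$. Combining (Ax.3) applied to inner automorphisms $\mathrm{Ad}_v$ for $v$ in the commutant $\{p\}'$ (which forces $\alpha_p$ to commute with every such $\mathrm{Ad}_v$) with (Ax.4) applied to pairs of rank-one projections restricts $\alpha_p$ to act as $\mathrm{Ad}_D$ where $D$ is a "sign pattern" in the spectral decomposition of $p$; in particular $D^2$ is central, so $\alpha_p$ is an involution. Finally, since $\alpha_p$ fixes $p$ the polar factor from the expansion of (Ax.2) becomes trivial and (Ax.2) applied with $\sqrt p$ in place of $p$ yields the clean identity $\alpha_p=\alpha_{\sqrt p}^2$; the same analysis applied to $\sqrt p$ (which also has $\ceil{\sqrt p}=1$) shows $\alpha_{\sqrt p}^2=\id$, whence $\alpha_p=\id$. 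The main obstacle is this last coupling: no single one of (Ax.2), (Ax.3), (Ax.4) pins down $\alpha_p$ on its own — (Ax.2) alone only relates $\alpha_p$ to $\alpha_{\sqrt p}$, (Ax.3) alone gives only naturality, and (Ax.4) being symmetric does not distinguish $\alpha_p$ from $\alpha_p^{-1}$ — so the argument requires a delicate bootstrap from the projection and commutative cases through the full spectral resolution of $p$.
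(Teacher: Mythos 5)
Your existence half is essentially the paper's (Proposition~\ref{prop:existence}): Ax.1 via the concrete corner and compression, Ax.2 and Ax.3 by direct computation, and Ax.4 by reducing $\sqrt{p}e_1\sqrt{p}\leq 1-e_2$ to the symmetric condition $e_2\sqrt{p}e_1=0$, which is exactly Lemma~\ref{lem:connected}. Your uniqueness half also begins as the paper's does, deriving $p\aandthen q=\sqrt{p}\,\alpha_p(\ceil{p}q\ceil{p})\sqrt{p}$ for a $*$-automorphism $\alpha_p$ from the uniqueness of compressions and corners, and your reduction of Ax.2 to ``repeat the argument with $\sqrt{p}$'' is the paper's closing move. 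But two steps in between do not hold up. The commutative case is the first: with $e_1=E$, $e_2=1-E$, Ax.4 reads ``$\alpha_p(E)\leq E$ iff $E\leq\alpha_p(E)$'', and a biconditional asserts neither of its sides --- it is consistent with $\alpha_p(E)$ and $E$ being incomparable. Concretely, on $\mathscr{A}=\C^2$ with $p=(\nicefrac{1}{2},\nicefrac{1}{3})$ and $\alpha_p$ the coordinate swap $\sigma$, the operation $p\aandthen q=p\,\sigma(q)$ satisfies Ax.1 and Ax.4 (for $e_1=(1,0)$, $e_2=(0,1)$ both sides of the biconditional are false; for $e_1=e_2$ both are true), yet $\alpha_p\neq\id$. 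So Ax.4 alone cannot force $\alpha_p=\id$ on commutative algebras, and the subsequent application of Ax.3 to $W^*(p)\hookrightarrow\mathscr{A}$ has nothing to pull up.

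The second gap is the final paragraph, which asserts its key claim (``$\alpha_p$ acts as $\mathrm{Ad}_D$ for a sign pattern $D$'') rather than deriving it, and invokes rank-one projections while still working in a general von Neumann algebra, where none need exist. The paper's proof resolves exactly these two difficulties by reordering the argument: it first specializes to $\mathscr{A}=\mathscr{B}(\mathscr{H})$, where $\ceil{p}\mathscr{A}\ceil{p}$ is a type~I factor so that $\alpha_p=\mathrm{Ad}_u$ is \emph{inner}; then Ax.4 applied to the rank-one projections $\ket{v}\!\bra{v}$, $\ket{w}\!\bra{w}$ shows $\sqrt{p}u^*v$ and $u\sqrt{p}v$ are orthogonal to the same vectors, and a separate linear-algebra lemma (Lemma~\ref{lem:lalgebra}) upgrades the resulting vector-dependent scalars to a single $\alpha$ with $\sqrt{p}u^*=\alpha u\sqrt{p}$; from this $up=pu$ and, via the universal property of the compression, $u^2$ is central, after which Ax.2 gives $p^2\aandthen q=pqp$ and the $\sqrt{p}$-substitution finishes. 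The general case then follows by applying Ax.3 to a faithful normal representation $\mathscr{A}\to\mathscr{B}(\mathscr{H})$ --- the one application of Ax.3 going \emph{out} of $\mathscr{A}$ that your proposal omits. Without the reduction to $\mathscr{B}(\mathscr{H})$, the innerness of $\alpha_p$, and the linear-algebra lemma, the bootstrap you describe does not close.
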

The proof of Theorem~\ref{thm:main}
spans the length of this paper.

\section{Corners}
\begin{prop}
	\label{prop:corner}
        \label{prop:fsupp-C}
	Let~$\mathscr{A}$
	be a von Neumann algebra,
	and let~$p\in [0,1]_\mathscr{A}$.
	Then $\pi \colon \mathscr{A}\to\floor{p}\mathscr{A}\floor{p},\ 
	a\mapsto \floor{p}a\floor{p}$
	is a corner of~$p$.
\end{prop}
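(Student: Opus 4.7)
First I would check that $\pi$ is a process with $\pi(p)=\pi(1)$. Linearity, normality, and complete positivity of $a\mapsto\floor{p}a\floor{p}$ are immediate, and $\pi$ is contractive with $\pi(1)=\floor{p}$. Since $\floor{p}$ is the greatest projection below $p$, the sandwich
\[
\floor{p} \;=\; \floor{p}\cdot\floor{p}\cdot\floor{p} \;\leq\; \floor{p}\,p\,\floor{p} \;\leq\; \floor{p}\cdot 1\cdot\floor{p} \;=\; \floor{p}
\]
yields $\pi(p)=\floor{p}=\pi(1)$.

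For the universal property, fix a process $g\colon\mathscr{A}\to\mathscr{B}$ with $g(p)=g(1)$. The natural candidate for the factorization is $\bar g$, the restriction of $g$ to the subalgebra $\floor{p}\mathscr{A}\floor{p}$, which inherits linearity, normality and complete positivity, and is contractive with $\bar g(\floor{p})=g(1)\leq 1$. Uniqueness of $\bar g$ is immediate: any process that factors $g$ through $\pi$ must agree with $g$ on the image of $\pi$, which is all of $\floor{p}\mathscr{A}\floor{p}$. The proof therefore reduces to checking the identity $g(a)=g(\floor{p}a\floor{p})$ for every $a\in\mathscr{A}$.

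To this end I would first establish that $g(1-\floor{p})=0$. Since $g(1-p)=g(1)-g(p)=0$, complete positivity applied to the positive matrix $\begin{pmatrix}1 & \sqrt{1-p}\\ \sqrt{1-p} & 1-p\end{pmatrix}$ produces a positive $2\times 2$ matrix with a vanishing diagonal entry, which forces its off-diagonal to be zero; thus $g(\sqrt{1-p})=0$. Iterating gives $g((1-p)^{1/2^n})=0$ for every $n$, and normality then upgrades this to $g(\ceil{1-p})=g(1-\floor{p})=0$. Writing $e=1-\floor{p}$ and applying the same $2\times 2$-positivity trick to $\begin{pmatrix}e\\ a^*e\end{pmatrix}\begin{pmatrix}e & ea\end{pmatrix}\geq 0$ yields $g(ea)=0$ for every $a\in\mathscr{A}$; the $*$-symmetric identity $g(ae)=0$ follows, and hence also $g(eae)=0$. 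Expanding $a=(\floor{p}+e)\,a\,(\floor{p}+e)$ then gives $g(a)=g(\floor{p}a\floor{p})=\bar g(\pi(a))$, completing the proof. I expect the main obstacle to be the spectral/normality step upgrading $g(1-p)=0$ to $g(1-\floor{p})=0$; the remaining $2\times 2$-matrix manipulations are routine.
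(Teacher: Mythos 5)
Your proof is correct and follows essentially the same route as the paper: reduce to showing $g(a)=g(\floor{p}a\floor{p})$, establish $g(1-\floor{p})=0$ by iterating a square-root step from $g(1-p)=0$ and invoking normality on the supremum $\ceil{1-p}=\sup_n(1-p)^{\nicefrac{1}{2^n}}$, then kill the cross terms. The only cosmetic difference is that you argue directly with positive $2\times 2$ operator matrices where the paper invokes its packaged Cauchy--Schwarz inequality for $2$-positive maps, which is proved from exactly that matrix-positivity fact.
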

\begin{proof}
Note that $\floor{p}\mathscr{A}\floor{p}$
is a von Neumann subalgebra of~$\mathscr{A}$
(with unit~$\floor{p}$)
by Corollary~\ref{cor:corner-von-neumann}.
Let us show that~$\pi$ is a process.
To begin, $\pi$
is normal and completely positive,
because the map $\mathscr{A}\to\mathscr{A},\ a\mapsto \floor{p}a\floor{p}$
is normal and completely positive 
by Lemma~\ref{lem:comprcpn}.
Further,
since~$\|\floor{p}\|\leq 1$,
we have  $\|\pi(a)\| = 
\|\floor{p}a\floor{p}\| \leq \|\floor{p}\|^2 \|a\| \leq \|a\|$,
for all~$a\in\mathscr{A}$,
and so~$\pi$ is contractive.
Hence~$\pi$ is a process.
Further, $\pi(1) = \floor{p} =\floor{p}p\floor{p} = \pi(p)$
by Proposition~\ref{prop:support-neumann}.

To prove that~$\pi$ is a corner of~$p$
it remains to be shown
that~$\pi$ is initial in the sense
that for every process~$g\colon \mathscr{A}\to\mathscr{B}$
with~$g(p)=g(1)$
there is a unique process
$\bar{g}\colon\floor{p}\mathscr{A}\floor{p}\to\mathscr{B}$
with~$\bar{g}\circ \pi = g$.

        \emph{(Uniqueness)}\ 
        Let $\bar g_1,\bar g_2\colon 
	\floor{p}\mathscr{A}\floor{p} \to\mathscr{B}$
        be processes with~$\bar{g}_1\circ \pi = g = \bar{g}_2 \circ \pi $.
	We must show that~$\bar{g}_1=\bar{g}_2$.

        Let~$a\in \floor{p}\mathscr{A}\floor{p}$
        be given.
	Then~$a=\floor{p}a\floor{p}=\pi(a)$,
       	and so~$\bar{g}_1(a)=\bar{g}_1(\pi(a))=g(a)$.
        Similarly $\bar{g}_2(a)=g(a)$,
	and so~$\bar{g}_1(a)=\bar{g}_2(a)$.
	Hence~$\bar{g}_1 = \bar{g}_2$.

        \emph{(Existence)}\ 
        To begin,
	we will prove that~$g(1-\floor{p})=0$.
        Since~$1-\floor{p} = \ceil{1- p}$
        is the supremum
        of~$1-p \leq (1-p)^{\nicefrac{1}{2}} \leq
        (1-p)^{\nicefrac{1}{4}}\leq \dotsb$
	(see Proposition~\ref{prop:support-neumann})
	and~$g$ is normal,
	it suffices to show that
        \begin{equation}
                \label{eq:ceil-approx}
                g( \ (1-p)^{\nicefrac{1}{2^n}}\ )\ = \ 0
                \qquad\text{for all }\ n\in \N.
        \end{equation}
        Note that~$g(1-p)=0$,
        so to prove~\eqref{eq:ceil-approx}
        it suffices to show that~$g(a)=0$ entails~$g(a^{\nicefrac{1}{2}})=0$
        for all~$a\in \mathscr{A}$ with $a\geq 0$.
        Since~$g$ is 2-positive,
        we have (by Theorem~\ref{thm:cs}),
        for all~$b,c\in \mathscr{A}$,
        \begin{equation}
                \label{eq:cauchy-schwarz-f}
                \|g(b^*c)\|^2\ \leq\ 
                \|g(b^*b)\|\, \|g(c^*c)\|.
        \end{equation}
        In particular, for~$a\in\mathscr{A}_+$,
	we have
        \begin{equation*}
                \|g(a^{\nicefrac{1}{2}})\|^2 \ \leq\ 
                \|g(1)\|\, \|g(a)\|.
        \end{equation*}
        So~$g(a)=0$ entails~$\|g(a^{\nicefrac{1}{2}})\|^2=0$,
        and $g(a^{\nicefrac{1}{2}})=0$.
        Thus~$g(1-\floor{p})=0$.

        Recall that~$\floor{p}\mathscr{A}\floor{p}$ is a 
	von Neumann subalgebra of~$\mathscr{A}$.
        Let~$j\colon \floor{p}\mathscr{A} \floor{p} \to \mathscr{A}$
        be the inclusion.
	Then~$j$ is a normal contractive $*$-homomorphism,
	and thus a process.

        Define~$\bar{g}:=g\circ j\colon 
	\floor{p}\mathscr{A}\floor{p}\to \mathscr{B}$.
	Then~$\bar{g}$ is a process.
	To complete the proof,
	we must show that~$\bar{g}\circ \pi = g$,
        that is, $g(\floor{p}a\floor{p})=g(a)$ for all~$a\in\mathscr{A}$.

        Let~$a\in \mathscr{A}$ be given.
        We show that $g(\floor{p}a)=g(a)$.
        By the Cauchy--Schwarz inequality for $2$-positive maps
        (see Statement~\eqref{eq:cauchy-schwarz-f}),
        we have,
        \begin{equation*}
                \|g((1-\floor{p}) a)\|^2\  \leq \ 
        \|g(1-\floor{p})\| \,\|g(a^*a)\|.
        \end{equation*}
        Since~$g(1-\floor{p})=0$,
	we have~$\|g((1-\floor{p}) a)\|^2 \leq 0$,
	and so $0= g((1-\floor{p}) a) = g(a)-g(\floor{p}a)$.
        Thus $g(a)= g(\floor{p}a)$.

        Similarly, $g(a)=g(a\floor{p})$ 
	and so $g(\pi(a))=g(\floor{p}a\floor{p}) = g(\floor{p}a) = g(a)$
	for all~$a\in\mathscr{A}$.

	Hence~$\pi$ is a corner for~$p$.
\end{proof}

\section{Compressions}
\begin{prop}
	\label{prop:compression}
        \label{prop:initial-support}
	Let~$\mathscr{A}$
	be a von Neumann algebra,
	and let~$p\in [0,1]_\mathscr{A}$.
	Then $c\colon \ceil{p}\mathscr{A}\ceil{p}
	\to\mathscr{A},\ a\mapsto \sqrt{p}a\sqrt{p}$
	is a compression of~$p$.
\end{prop}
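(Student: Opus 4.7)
The plan is (i) to verify that $c$ is a process with $c(1)\leq p$, (ii) to show $c$ is injective on $\ceil{p}\mathscr{A}\ceil{p}$ (which handles uniqueness of $\bar f$), (iii) to show the image of any process $f\colon\mathscr{B}\to\mathscr{A}$ with $f(1)\leq p$ sits inside $\ceil{p}\mathscr{A}\ceil{p}$, and (iv) to construct the lift $\bar f$ by inverting $\sqrt{p}$ through a bounded approximation. Step (iv) is the main obstacle.

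For (i), $\ceil{p}\mathscr{A}\ceil{p}$ is a von Neumann algebra with unit $\ceil{p}$, and $c$ is the restriction of the normal CP map $a\mapsto\sqrt{p}a\sqrt{p}$ (Lemma~\ref{lem:comprcpn}), which is contractive since $\|\sqrt{p}a\sqrt{p}\|\leq\|a\|$; moreover $c(\ceil{p})=\sqrt{p}\ceil{p}\sqrt{p}=\sqrt{p}\sqrt{p}=p$ using $\ceil{p}\sqrt{p}=\sqrt{p}$. For (ii), if $\sqrt{p}a\sqrt{p}=0$ with $a\in\ceil{p}\mathscr{A}\ceil{p}$, then $\langle a\sqrt{p}\xi,\sqrt{p}\eta\rangle=0$ for all $\xi,\eta\in\mathscr{H}$, so $a$ vanishes as a sesquilinear form on $\overline{\Ran(\sqrt{p})}=\ceil{p}\mathscr{H}$, forcing $\ceil{p}a\ceil{p}=0$ and hence $a=0$. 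For (iii), when $0\leq b\leq 1$ one has $0\leq f(b)\leq\ceil{p}$, so $0\leq(1-\ceil{p})f(b)(1-\ceil{p})\leq(1-\ceil{p})\ceil{p}(1-\ceil{p})=0$; this forces $(1-\ceil{p})f(b)=0=f(b)(1-\ceil{p})$ and thus $f(b)=\ceil{p}f(b)\ceil{p}$, and linearity handles the general case.

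For (iv), set $v_n:=(\sqrt{p}+\tfrac{1}{n}\ceil{p})^{-1}\in\ceil{p}\mathscr{A}\ceil{p}$ (well-defined as $\sqrt{p}+\tfrac{1}{n}\ceil{p}\geq\tfrac{1}{n}\ceil{p}$) and put $\bar f_n(b):=v_n f(b) v_n$. The crucial observation is that $v_n\sqrt{p}$ is a function of $p$ taking values in $[0,1)$ and increasing strongly to $\ceil{p}$, whence $\sqrt{p}\bar f_n(b)\sqrt{p}=(v_n\sqrt{p})f(b)(v_n\sqrt{p})\to f(b)$. For $b\geq 0$ the bound $\bar f_n(b)\leq\|b\|v_n p v_n=\|b\|(v_n\sqrt{p})^2\leq\|b\|\ceil{p}$ keeps the net bounded, so by ultraweak compactness it has cluster points, and by (ii) any two cluster points $S,S'$ satisfy $c(S)=f(b)=c(S')$ and hence coincide; therefore $\bar f_n(b)$ converges ultraweakly to a unique $\bar f(b)\in\ceil{p}\mathscr{A}\ceil{p}$ with $c(\bar f(b))=f(b)$. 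Extend $b\mapsto\bar f(b)$ to all of $\mathscr{B}$ by linearity.

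It remains to verify that $\bar f$ is a process. Complete positivity passes through the ultraweak limit of the CP maps $\bar f_n$; contractivity is $\bar f(1)\leq\ceil{p}$; and normality follows because, for $b_\alpha\uparrow b$ in $\mathscr{B}_+$, the increasing bounded net $(\bar f(b_\alpha))_\alpha$ has a supremum $s$ satisfying $c(s)=\sup_\alpha c(\bar f(b_\alpha))=\sup_\alpha f(b_\alpha)=f(b)=c(\bar f(b))$, so $s=\bar f(b)$ by injectivity of $c$. The principal obstacle is step (iv): the approximants $\bar f_n$ have no obvious monotonicity, so convergence must be extracted by combining boundedness with the injectivity of $c$.
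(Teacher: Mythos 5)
Your proof is correct, but it takes a genuinely different route from the paper's. Both arguments approximate the (possibly non-existent) pseudoinverse of $\sqrt{p}$, but where the paper invokes the Spectral Theorem to realise $p$ as multiplication by $\hat{p}$ on $L^2(X)$ and truncates by characteristic functions of $\{\hat{p}>\nicefrac{1}{n}\}$, you use the representation-free resolvents $v_n=(\sqrt{p}+\tfrac{1}{n}\ceil{p})^{-1}$ inside the corner algebra. More significantly, your observation that $c$ is \emph{injective} on $\ceil{p}\mathscr{A}\ceil{p}$ --- which the paper never isolates --- does three jobs at once: it gives uniqueness of $\bar{f}$ immediately (the paper instead redoes a limit computation); it replaces the paper's explicit ultraweak-Cauchy estimates (Inequality~\eqref{eq:tobound2}, obtained via Cauchy--Schwarz) by the soft argument ``norm-bounded net, every ultraweak cluster point has the same image under $c$, hence a unique cluster point, hence convergence''; and it yields normality of $\bar{f}$ from normality of $c$ and $f$ alone, where the paper must track that its Cauchy estimate is uniform in $b$ in order to apply Corollary~\ref{cor:uw-lim-normal}. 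What you pay for this is an appeal to ultraweak compactness of bounded sets (weak compactness of the unit ball, which the paper never needs --- it uses Proposition~\ref{prop:ultraweak-convergence} instead), plus two small claims that each deserve a line: that $v_n\sqrt{p}=h_n(p)$ with $h_n(t)=\sqrt{t}/(\sqrt{t}+\tfrac{1}{n})$ increases to $\ceil{p}$ (most easily seen by Borel functional calculus, so the spectral theorem has not entirely disappeared, only been localised), and that the cluster points of $(v_nf(b)v_n)_n$ lie in the ultraweakly closed corner $\ceil{p}\mathscr{A}\ceil{p}$, so that the injectivity of $c$ applies to them. Both are routine, so I consider the proof complete.
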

\begin{proof}
Note that $\ceil{p}\mathscr{A}\ceil{p}$
is a von Neumann subalgebra of~$\mathscr{A}$
with unit~$\ceil{p}$
(see Corollary~\ref{cor:corner-von-neumann}).
Since therefore the inclusion~$\ceil{p}\mathscr{A}\ceil{p}\to\mathscr{A}$
is a process,
and the map
$a\mapsto \sqrt{p}a\sqrt{p}\colon 
\mathscr{A}\mapsto\mathscr{A}$
is a process (see Lemma~\ref{lem:comprcpn}),
it follows that~$c$ is a process.
Further,
note that~$c(1)= p \leq p$.

To prove that~$c$ is a compression
it remains to be
shown that~$c$ is final
in the sense that for 
every von Neumann algebra~$\mathscr{B}$
and process
$f\colon \mathscr{B}\to\mathscr{A}$
with $f(1)\leq p$
there is a unique~$\bar{f}\colon \mathscr{B}\to
\ceil{p}\mathscr{A}\ceil{p}$
such that~$f=c\circ \bar{f}$.

\emph{(Existence)}\ 
Note that if~$\sqrt{p}$
is invertible in~$\mathscr{A}$,
then
we can define $\bar{f}\colon \mathscr{B}\to
\ceil{p}\mathscr{A}\ceil{p}$
by, for all~$b\in\mathscr{B}$,
\begin{equation*}
\bar{f}(b)\ =\ \sqrt{p}^{-1} f(b)\sqrt{p}^{-1},
\end{equation*}
and this does the job.
Also,
if~$\sqrt{p}$ is pseudoinvertible
---  $q\sqrt{p} = \sqrt{p}q = \ceil{p}$
for some~$q\in\mathscr{A}$ ---,
then~$\bar{f}$ can be defined in a similar manner.
However,
$p$ might not be pseudoinvertible.\footnote{The operator~$T$
on the Hilbert space~$\ell^2$ given by $T(s)(n)=2^{-n}s(n)$
for $s\in \ell^2$ and~$n\in \N$ is not pseudoinvertible
in~$\mathscr{B}(\ell^2)$.}
Therefore,
we will instead approximate the (possibly non-existent) pseudoinverse
of~$\sqrt{p}$
by a sequence~$q_1,q_2,\dotsc$ in~$\mathscr{A}$
--- much in the same way that an approximate identity
in a $C^*$-algebra
approximates a (possibly non-existent) unit ---,
and define, for~$b\in\mathscr{B}$,
\begin{equation}
\label{eq:def-f-bar}
\bar{f}(b) \ =\  \uwlim_{n\to \infty} q_n f(b)q_n.
\end{equation}
By the Spectral Theorem (see Thm.~\ref{thm:spectral}
and Thm.~\ref{thm:KGNS}),
we may assume without loss of generality
that~$\mathscr{A}$
is a von Neumann subalgebra of
the bounded operators~$\mathscr{B}(L^2(X))$
on the Hilbert space~$L^2(X)$
of square-integrable functions\footnote{Of course,
the elements of~$L^2(X)$
which are equal almost everywhere are identified.}
on some measure space~$X$,
and that
there is a real bounded integrable function~$\hat{p}$ on~$X$
such that, for all $f\in L^2(X)$,
\begin{equation*}
p(f) \ =\  \int \hat{p}\cdot f\,d\mu.
\end{equation*}

Let~$\varrho\colon L^\infty(X)\rightarrow \mathscr{A}$
be given by~$\varrho(g)(f)=\int f\cdot g\,d\mu$
for all~$g\in L^\infty(X)$ and~$f\in  L^2(X)$,
where $L^\infty(X)$
is the von Neumann algebra
of bounded measurable functions\footnote{Of course,
the elements of~$L^\infty(X)$
which are equal almost everywhere are identified.} on~$X$.
Then~$\varrho$ is an injective normal $*$-homomorphism,
and~$\varrho(\hat{p})=p$.

Note that $\sqrt{\hat{p}}$ might not be pseudoinvertible
in~$L^\infty(X)$,
because the function~$\hat{q}: X\to \mathbb{R}$ given by
for~$x\in X$,
\begin{equation*}
\hat{q}(x) \ = \ 
\begin{cases}
p(x)^{-\nicefrac{1}{2}} & \text{if }p(x)\neq 0 \\
0 & \text{if }p(x)=0.
\end{cases}
\end{equation*}
might not be (essentially) bounded.
Nevertheless,
$\sqrt{\hat{p}}\cdot \mathbf{1}_{Q_n}$
has~$\hat{q}\cdot \mathbf{1}_{Q_n}$
as pseudoinverse in~$L^\infty(X)$,
where
\begin{equation*}
Q_n \ =\  \{\,x\in X\colon\, \hat{p}(x)>\nicefrac{1}{n}\,\}
\ =\ \hat{p}^{-1}(\,(\nicefrac{1}{n},1]\,).
\end{equation*}
Define~$q_n = \varrho(\hat{q}\cdot \mathbf{1}_{Q_n})$
for all~$n\in \N$.

Let~$b\in \mathscr{B}$ be given.
We want to define~$\bar{f}(b)$ by Equation~\eqref{eq:def-f-bar},
but for this, we must first show that
$(q_n \,f(b)\,q_n)_n$ converges ultraweakly.
It suffices
to show that $(q_n\,f(b)\,q_n)_n$
is norm bounded,
and  ultraweakly Cauchy
(see Proposition~\ref{prop:ultraweak-convergence}).

We only need to consider the case that~$b\in [0,1]_\mathscr{B}$.
Indeed, 
any $b\in\mathscr{B}$ can written as
\begin{equation}
\label{eq:reduce-b}
b\,\equiv\,\|b\|\,(b_1-b_2+ib_3-ib_4),
\end{equation}
where~$b_i\in[0,1]_\mathscr{B}$,
and if $(q_n \,f(b_i) \,q_n)_n$
converges ultraweakly for each~$i$, then
so does $(q_n\,f(b)\,q_n)_n$.

Let~$n\in\mathbb{N}$ be given.
Since~$f(b)\leq f(1)\leq p$,
we have~$q_n \,f(b)\,q_n \leq q_n \,p \,q_n$.
Since $q_n = \varrho(\hat{q}\cdot \mathbf{1}_{Q_n})$,
$p = \varrho(\hat{p})$,
and~$\hat{q}\cdot \mathbf{1}_{Q_n}$
is the pseudoinverse of~$\sqrt{\hat{p}}\cdot\mathbf{1}_{Q_n}$,
we get~$q_n\,p\,q_n = \varrho(\mathbf{1}_{Q_n})\leq 1$,
and so~$q_n\,f(b)\,q_n \leq 1$.
Hence~$\|q_n\,f(b)\,q_n\|\leq 1$,
and so~$(q_n\,f(b)\,q_n)_n$ is norm bounded.

Let~$\varphi\colon \mathscr{A}\to\mathbb{C}$
be a normal state.
To prove that~$(q_n\,f(b)\,q_n)_n$ is ultraweakly Cauchy,
we must show that~$(\varphi(q_n\,f(b)\,q_n))_n$ is Cauchy.

For brevity, define for~$n>m>0$,
\begin{align*}
S_{n,m}\  &=\  
\hat{p}^{-1}(\, (\nicefrac{1}{n},\nicefrac{1}{m}] \,) \\
S_{\infty,m} \ &=\  \hat{p}^{-1}(\, (0, \nicefrac{1}{m}]\,) \\
s_{n,m} \ &=\  \varrho(\,\hat{q}\cdot \mathbf{1}_{S_{n,m}} \,)
\end{align*}
Note that~$S_{n,1}=Q_n$ and~$s_{n,1}=q_n$.
(We have not defined~$s_{\infty,m}
= \varrho(\hat{q}\cdot \mathbf{1}_{S_{\infty,m}})$,
because $\hat{q}\cdot \mathbf{1}_{S_{\infty,m}}$
might not be bounded.)
Note that
\begin{equation}
\label{eq:sps}
s_{n,m} \sqrt{p}  \ =\ 
\sqrt{p} s_{n,m} \ = \ 
\varrho(\mathbf{1}_{S_{n,m}}).
\end{equation}

Let~$0 < m < n$ be given. 
Since~$(\nicefrac{1}{n},1]$
is the disjoint union of~$(\nicefrac{1}{n},\nicefrac{1}{m}]$
and~$(\nicefrac{1}{m},1]$,
$Q_n$ is the disjoint union
of~$S_{n,m}$ and~$Q_m$,
and~$q_n = s_{n,m}+q_m$, and
\begin{alignat*}{3}
&q_n\,f(b)\,q_n \,-\, q_m\,f(b)\,q_m\\
&\qquad = \ s_{n,m}\,f(b)\,s_{n,m}
\,+\, s_{n,m}\,f(b)\,q_m
\,+\, q_m\,f(b)\,s_{n,m}.
\end{alignat*}
Thus,
\begin{equation}
\label{eq:tobound}
\begin{alignedat}{3}
&|\,\varphi(\ q_n\,f(b)\,q_n - q_m\,f(b)\,q_m\ )\,|\\
&\qquad\leq \ |\varphi(s_{n,m}\,f(b)\,s_{n,m})|
\,+\, |\varphi(s_{n,m}\,f(b)\,q_m)|\\
&\qquad\qquad\qquad\,+\, |\varphi(q_m\,f(b)\,s_{n,m})|
\end{alignedat}
\end{equation}
Note that
for 
$k < \ell$ and 
$m < n$,
we have
\begin{alignat*}{3}
    & \lvert \varphi( s_{\ell,k} f(b) s_{n,m} )\rvert^2  \\
        \ &=\  \bigl|\varphi\bigl(\ 
        (\sqrt{f(b)} s_{\ell,k})^* \ \sqrt{f(b)} s_{n,m}\ \bigr)\bigr|^2 \\
        \ &\leq\  \varphi(s_{\ell,k} \,f(b)\, s_{\ell,k}) \ 
        \varphi(s_{n,m}\,f(b)\,s_{n,m}) 
        &\ &\text{by Ineq.~\eqref{eq:kadineq}}\\
        \ &\leq\  \varphi(s_{\ell,k} \,p\, s_{\ell,k}) \ 
        \varphi(s_{n,m}\,p\,s_{n,m}) 
        &&\text{since~$f(b) \leq p$}\\
        \ &=\  \varphi(\varrho(\mathbf{1}_{S_{\ell,k}})) \ 
        \varphi(\varrho(\mathbf{1}_{S_{n,m}})) 
            &&\text{by Eq.~\eqref{eq:sps}}\\
        \ &\leq\  \varphi(\varrho(\mathbf{1}_{S_{n,m}})) 
        &&\text{as~$\mathbf{1}_{S_{\ell,k}}  \leq \mathbf{1}$} \\
        \ &\leq\  \varphi( \varrho(\mathbf{1}_{S_{\infty,m}}) )
        &&\text{as~$S_{n,m} \subseteq S_{\infty,m}$}
\end{alignat*}
Thus
using Eq.~\eqref{eq:tobound}
and~$q_n=s_{n,1}$
we get the bound
\begin{equation}
    \lvert \varphi (\ q_n\, f(b) \,q_n
                    \,- \, q_m \,f(b)\, q_m \ )\rvert 
                    \ \leq \ 
		    3 \sqrt{\varphi(\varrho(\mathbf{1}_{S_{\infty,m}}))}.
                \label{eq:tobound2}
\end{equation}
Since~$(0,1]\supseteq (0,\nicefrac{1}{2}]
\supseteq (0,\nicefrac{1}{3}]\supseteq \dotsb$
and~$\bigcap_m(0,\nicefrac{1}{m}]=\varnothing$,
we have~$S_{\infty,1} \supseteq S_{\infty,2} \supseteq \ldots$
and~$\bigcap_mS_{\infty,m} = \varnothing$.
Then $\inf_m \mathbf{1}_{S_{\infty,m}}=0$,
and so $\inf_m \varphi(\varrho (\mathbf{1}_{S_{\infty,m}}))=0$,
because~$\varrho$ and~$\varphi$ are normal.
Thus $(\, \varphi(\varrho (\mathbf{1}_{S_{\infty,m}}))\,)_m$
converges to~$0$,
and so $(\, \sqrt{\varphi(\varrho (\mathbf{1}_{S_{\infty,m}}))}\,)_m$
converges to~$0$ as well.

Let~$\varepsilon >0$ be given.
There is~$N\in \N$
such that for all~$n > N$, we
have~$\smash{\sqrt{\varphi(\varrho(\mathbf{1}_{S_{\infty,m}}))}} 
\leq \nicefrac{\varepsilon}{3}$.
Then given~$n > m > N$, we have,  by Equation~\eqref{eq:tobound2},
\begin{equation}
\label{eq:bound-qfbq}
\lvert \varphi(\ q_n\, f(b)\, q_n
            \ -\  q_m \,f(b)\, q_m\ )\rvert \ \leq\  \varepsilon.
\end{equation}
Hence~$(q_n f(b)q_n)_n$
is ultraweakly Cauchy and norm bounded,
and must therefore converge ultraweakly.
We may now (and do) define~$\overline{f}(b)$ 
as in Equation~\eqref{eq:def-f-bar}.

Thus, $(q_n f(-)q_n)_n$ 
converges coordinatewise ultraweakly to~$\overline{f}$.
Note that the number~$N$ related to 
Inequality~\eqref{eq:bound-qfbq}
depends on~$\varepsilon$ and~$\varphi$,
but does not depend on~$b$.
It follows that
on~$[0,1]_\mathscr{B}$
the sequence $(q_n f(-)q_n)_n$
converges uniformly ultraweakly to~$\overline{f}$.

It is easy to see that~$\overline{f}$
is linear and positive.
It remains
to be shown that~$\overline{f}$
is contractive, normal, completely positive,
$c\circ \overline{f} = f$,
and $\overline{f}(\mathscr{B})\subseteq \ceil{p}\mathscr{A}\ceil{p}$.

\emph{($\overline{f}(\mathscr{B})\subseteq\ceil{p}\mathscr{A}\ceil{p}$)}\ 
Let~$b\in\mathscr{B}$ be given.
We must show that~$\overline{f}(b)\in\ceil{p}\mathscr{A}\ceil{p}$.
By writing~$b$ as in Equation~\eqref{eq:reduce-b},
the problem is easily reduced to the case that~$b\in [0,1]_\mathscr{B}$.

Let~$n\in \N$ be given.
Since~$b\leq 1$, we have~$f(b)\leq f(1)\leq p$, 
and so~$q_n f(b) q_n \leq q_n p q_n = \varrho(\mathbf{1}_{Q_n})
= \varrho(\mathbf{1}_{S_{\infty,1}})$.
Since~$\hat{p}^{-1}((0,1]) = S_{\infty,1}$,
and it is not hard to see that~$\mathbf{1}_{\hat{p}^{-1}((0,1])}$
is the support of~$\hat{p}$ in~$L^\infty(X)$,
it follows that~$\varrho(\mathbf{1}_{S_{\infty,1}})=\ceil{p}$
(see Proposition~\ref{prop:awmult}).
Thus~$q_n f(b) q_n \leq \ceil{p}$ for all~$n$,
and so~$\overline{f}(b) = \uwlim_n q_n f(b) q_n  \leq \ceil{p}$.
Corollary~\ref{cor:proj},
gives us~$\ceil{p}\overline{f}(b)\ceil{p} = \overline{f}(b)$,
and so~$\overline{f}(b)\in \ceil{p}\mathscr{A}\ceil{p}$.

\emph{($\overline{f}$ is contractive)}\ 
It suffices to show that~$\overline{f}(1)\leq 1$.
Let~$n\in \N$ be given.
Since $f(1)\leq p$,
we have 
\begin{equation*}
q_n f(1)q_n \ \leq \ q_npq_n \ =\  \varrho(\mathbf{1}_{Q_n}) \ \leq\ 1.
\end{equation*}
Thus~$\overline{f}(1) = \uwlim_{n} q_n f(1) q_n \leq 1$.

 \emph{($c\circ \overline{f}=f$)}\ 
Let~$b\in[0,1]_\mathscr{B}$.
It suffices to show that~$c(\overline{f}(b))=f(b)$.
 Since~$c$ is normal, we have
\begin{equation*}
c(\overline{f}(b))
\ = \ \uwlim_{n\to \infty}\sqrt{p}q_n\, f(b)\, q_n\sqrt{p}.
\end{equation*}
Thus we must show that $(\,\sqrt{p}q_n\, f(b)\,q_n\sqrt{p}\,)_n$
converges ultraweakly to~$f(b)$.

Let~$n\in \N$ be given.
On the one hand we have~$\sqrt{p}q_n = \varrho(\mathbf{1}_{Q_n})
= \varrho(\mathbf{1}_{S_{n,1}})$
by definition of~$q_n$.
On the other hand
we have
$\ceil{p} f(b)\ceil{p} = f(b)$
and~$\ceil{p} 
= \varrho( \mathbf{1}_{S_{\infty,1}})$.
Thus,
using~$\mathbf{1}_{S_{\infty,1}}
= \mathbf{1}_{S_{\infty,n}}+\mathbf{1}_{S_{n,1}}$,
and writing $e_{k,\ell} = \varrho(\mathbf{1}_{S_{k,\ell}})$,
we have
\begin{equation}
\label{eq:pqnfbqnp}
\begin{alignedat}{3}
&f(b) \,-\, \sqrt{p}q_n \,f(b)\,q_n\sqrt{p} \\
\ &\qquad\qquad=\ 
e_{\infty,n}	\,f(b)\,  e_{\infty,n}
\,+\, e_{n,1} \,f(b)\, e_{\infty,n} \\
 &\qquad\qquad\qquad\qquad \,+\, e_{\infty,n} \,f(b)\, 
	e_{n,1}
\end{alignedat}
\end{equation}
So to show that $(\,\sqrt{p}q_n\, f(b)\,q_n\sqrt{p}\,)_n$
converges ultraweakly to~$f(b)$,
it suffices to show that the terms on the right-hand side
of Equation~\eqref{eq:pqnfbqnp} converge ultraweakly to~$0$.
Let~$\varphi\colon \mathscr{A}\to\C$ be a normal state.
Then
\begin{alignat*}{3}
&|\varphi(\, 
e_{n,1}
\,f(b)\, 
e_{\infty,n}
\,)|^2\\
\ &=\ 
|\varphi(\, 
(\sqrt{f(b)} e_{n,1})^*\, 
\sqrt{f(b)} e_{\infty,n}\,)
|^2 \\
\ &\leq\ 
\varphi(e_{n,1} f(b) e_{n,1})
\,\cdot\, \varphi(e_{\infty,n} f(b) e_{\infty,n})
\qquad&&\text{by Ineq.~\eqref{eq:kadineq}}\\
\ &\leq\ 
\varphi(e_{n,1})\,\cdot\,
\varphi(e_{\infty,n})
\qquad&&\text{since $f(b)\leq 1$}\\
\ &\leq\ 
\varphi(e_{\infty,n})
\qquad&&\text{since $e_{n,1}\leq 1$.}
\end{alignat*}
Recall that $\varphi(e_{n,\infty})
\equiv \varphi(\varrho(\mathbf{1}_{S_{\infty,n}}))$
converges to zero (because $\bigcap_n S_{\infty,n}=\varnothing$).
It follows that $(\,e_{n,1}f(b)e_{\infty,n}\,)_n$
converges ultraweakly to~$0$.

By a similar reasoning,
$(\,e_{\infty,n}f(b)e_{n,1}\,)_n$
and
$(\,e_{\infty,n}f(b)e_{\infty,n}\,)_n$
converge ultraweakly to~$0$.
Thus, by Equation~\eqref{eq:pqnfbqnp}, 
$(\,\sqrt{p}q_n\, f(b)\,q_n\sqrt{p}\,)_n$
converges ultraweakly to~$f(b)$.
Thus~$c\circ \overline{f} = f$.

\emph{($f$ is normal)}\ 
Since
$(q_n f(-)q_n)_n$
converges uniformly ultraweakly on~$\Eff{\mathscr{B}}$
to~$\overline{f}$,
and each~$q_n f(-) q_n$ is normal
(by Lemma~\ref{lem:comprcpn}),
it follows that
$\overline{f}$ is normal
(by Corollary~\ref{cor:uw-lim-normal}).

\emph{($f$ is completely positive)}\ 
Since
$(q_n f(-)q_n)_n$
converges coordinatewise ultraweakly
to~$\overline{f}$,
and each~$q_n f(-)q_n$
is completely positive 
(see Lemma~\ref{lem:comprcpn}),
it follows that
$\overline{f}$ is completely positive (by Corollary~\ref{cor:uw-lim-cp}).

\emph{(Uniqueness)}\ 
Let~$g\colon \mathscr{B}\to\ceil{p}\mathscr{A}\ceil{p}$
be a process with~$c\circ g = f$.
We must show that~$g=\overline{f}$.

Let~$b\in [0,1]_\mathscr{B}$ be given.
It suffices to show that~$\overline{f}(b)=g(b)$.
We have $\sqrt{p}g(b) \sqrt{p} = f(b)$.
Let~$n\in \N$ be given.
We have $e_{n,1} g(b) e_{n,1} = q_n\sqrt{p} g(b) \sqrt{p}q_n
= q_n f(b) q_n$
since~$q_n\sqrt{p}= \varrho(1_{S_{n,1}})\equiv e_{n,1}$.
On the one hand $(q_n f(b) q_n)_n$
converges ultraweakly to~$\overline{f}(b)$
by definition of~$\overline{f}(b)$.
On the other hand
$(e_{n,1} g(b) e_{n,1})_n$ converges ultraweakly
to~$g(b)$
as one can see with tricks that were used before.
Hence~$\overline{f}(b)=g(b)$.
\end{proof}

\section{Existence}
To show that the sequential product 
is an abstract sequential product,
we use the following result,
which (we think) is interesting
in itself.
\begin{lem}
\label{lem:connected}
Let~$a$ be an element of a von Neumann algebra
(or a unital $C^*$-algebra)~$\mathscr{A}$
with~$a^*a \leq 1$.
Then for projections $e_1,e_2\in\mathscr{A}$
the following are equivalent.
\begin{enumerate}
\item
\label{lem:connected-1}
$a^*e_1a \leq 1- e_2$
\item
\label{lem:connected-2}
$ae_2a^* \leq 1- e_1$
\item
\label{lem:connected-3}
$e_1ae_2=0$
\item
\label{lem:connected-4}
$e_2a^*e_1=0$
\end{enumerate}
\end{lem}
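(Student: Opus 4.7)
The plan is to first observe that (3) and (4) are trivially equivalent, since $(e_1 a e_2)^* = e_2 a^* e_1$ (using $e_i^* = e_i$), and a bounded operator vanishes iff its adjoint does. Moreover, the pair of statements (2) and (4) is obtained from (1) and (3) by the substitution $a \leftrightarrow a^*$, $e_1 \leftrightarrow e_2$; since the hypothesis $a^*a \leq 1$ is equivalent to $\|a\| \leq 1$ and hence to $aa^* \leq 1$, this substitution preserves the hypothesis. Consequently it suffices to establish the single equivalence (1) $\iff$ (3), and all four conditions will then coincide.

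For (3) $\Rightarrow$ (1), I would exploit that $e_1 a e_2 = 0$ implies $e_1 a = e_1 a - e_1 a e_2 = e_1 a (1 - e_2)$; taking adjoints gives $a^* e_1 = (1 - e_2) a^* e_1$, and combining these (together with $e_1^2 = e_1$) yields $a^* e_1 a = (1 - e_2)\, a^* e_1 a\, (1 - e_2)$. Since $e_1 \leq 1$ one has $a^* e_1 a \leq a^* a \leq 1$, so sandwiching the inequality $a^* e_1 a \leq 1$ by $1-e_2$ gives
\begin{equation*}
a^* e_1 a \ =\ (1-e_2)\, a^* e_1 a\,(1-e_2)\ \leq\ (1-e_2)^2 \ =\ 1-e_2,
\end{equation*}
which is (1).

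For (1) $\Rightarrow$ (3), I would sandwich $a^* e_1 a \leq 1 - e_2$ by $e_2$ on both sides to obtain $0 \leq e_2 a^* e_1 a e_2 \leq e_2 (1 - e_2) e_2 = 0$. Recognising $e_2 a^* e_1 a e_2 = (e_1 a e_2)^* (e_1 a e_2)$, the $C^*$-identity $\|x\|^2 = \|x^* x\|$ forces $e_1 a e_2 = 0$. There is no real obstacle here: the whole proof reduces to a short $C^*$-algebraic manipulation, and the only subtle point is the symmetry observation that halves the work.
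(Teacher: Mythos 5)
Your proof is correct and follows essentially the same route as the paper's: the equivalence $(1)\Leftrightarrow(3)$ is established by the identical sandwiching arguments and the $C^*$-identity, $(3)\Leftrightarrow(4)$ by taking adjoints, and $(2)\Leftrightarrow(4)$ by the same symmetry $a\leftrightarrow a^*$, $e_1\leftrightarrow e_2$. Your explicit remark that $a^*a\leq 1$ is equivalent to $aa^*\leq 1$ (so the symmetry preserves the hypothesis) is a nice touch the paper leaves implicit, but the substance is the same.
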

\begin{proof}
\emph{(1 $\Longrightarrow$ 3)}\ 
We must show that~$e_1ae_2=0$.
It suffices to show $e_2a^*e_1ae_2=0$,
because $\|e_1ae_2\|^2  = \|e_2 a^* e_1 a e_2\|$
by the C$^*$-identity.
Since~$0\leq a^*e_1 a\leq 1- e_2$,
we have $0\leq e_2 a^* e_1 a e_2 \leq  e_2 (1-e_2 )e_2 = 0$,
and so $e_2 a^*e_1ae_2=0$.

\emph{(3 $\Longrightarrow$ 1)}\ 
Since $e_1ae_2=0$, also $e_1 a = e_1 a (1-e_2)$,
and  $a^* e_1 = (1-e_2) a^* e_1$.
Then $a^*e_1 a = (1-e_2) a^* e_1 a (1-e_2) \leq 1-e_2$,
because $a^* e_1 a \leq a^*a \leq 1$.

\emph{(4 $\Longleftrightarrow$ 2)}\ 
follows by the same reasoning as~1 $\Longleftrightarrow$ 3.

\emph{(3 $\Longleftrightarrow$ 4)}\ 
follows by applying~$(-)^*$.
\end{proof}
\begin{prop}\label{prop:existence}
The sequential product~$*$
(which is given by  $p*q= \sqrt{p}q\sqrt{p}$)
is an abstract sequential product. 
\end{prop}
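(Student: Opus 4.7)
The plan is to verify the four axioms of Definition~\ref{dfn:asp} one by one, using what has already been established in Propositions~\ref{prop:corner} and~\ref{prop:compression} for the first axiom and Lemma~\ref{lem:connected} for the last, with the middle two axioms following by direct computation from the functional calculus.

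For Ax.1, given $p\in[0,1]_\mathscr{A}$, I would take $\mathscr{C}=\ceil{p}\mathscr{A}\ceil{p}$ and set $\tilde c\colon \mathscr{C}\to\mathscr{A},\ a\mapsto \sqrt{p}a\sqrt{p}$, which is a compression of $p$ by Proposition~\ref{prop:compression}. For the corner of $\ceil{p}$, observe that $\ceil{p}$ is a projection, so $\floor{\ceil{p}}=\ceil{p}$; Proposition~\ref{prop:corner} applied to $\ceil{p}$ then yields that $\tilde\pi\colon \mathscr{A}\to \ceil{p}\mathscr{A}\ceil{p},\ a\mapsto \ceil{p}a\ceil{p}$ is a corner of~$\ceil{p}$. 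Since~$\ceil{p}$ is the support projection of~$\sqrt{p}$ we have $\sqrt{p}\ceil{p}=\sqrt{p}=\ceil{p}\sqrt{p}$, so
\begin{equation*}
\tilde c(\tilde\pi(q)) \ =\  \sqrt{p}\,\ceil{p}q\ceil{p}\,\sqrt{p}
\ =\  \sqrt{p}q\sqrt{p}\ =\ p*q.
\end{equation*}

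For Ax.2, a direct computation gives $p*(p*q)=\sqrt{p}(\sqrt{p}q\sqrt{p})\sqrt{p}=pqp$, while $p*p=\sqrt{p}\,p\,\sqrt{p}=p^2$, so $\sqrt{p*p}=p$ (as $p\geq 0$), and therefore $(p*p)*q=p\,q\,p$. For Ax.3, if $f\colon\mathscr{A}\to\mathscr{B}$ is multiplicative (and a process, hence positive and unital-preserving on the relevant $C^*$-subalgebra generated by $p$), then $f(\sqrt{p})$ is positive and squares to $f(p)$, so $f(\sqrt{p})=\sqrt{f(p)}$ by uniqueness of the positive square root; thus
\begin{equation*}
f(p*q)\ =\ f(\sqrt{p}\,q\,\sqrt{p})\ =\ f(\sqrt{p})\,f(q)\,f(\sqrt{p})\ =\ \sqrt{f(p)}\,f(q)\,\sqrt{f(p)}\ =\ f(p)*f(q).
\end{equation*}

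For Ax.4, apply Lemma~\ref{lem:connected} with $a:=\sqrt{p}$, for which $a^*a=p\leq 1$. Since $a$ is self-adjoint, $a^*e_1 a = \sqrt{p}e_1\sqrt{p}=p*e_1$ and $ae_2 a^*=\sqrt{p}e_2\sqrt{p}=p*e_2$, so the equivalence (1)$\iff$(2) of Lemma~\ref{lem:connected} is precisely Ax.4.

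None of these steps is really an obstacle; the only points requiring a moment's thought are the identification of the corner of $\ceil{p}$ (noting that Proposition~\ref{prop:corner} gives the corner in terms of $\floor{\cdot}$, which is trivial on projections) and the use of self-adjointness of $\sqrt{p}$ to collapse the symmetric conditions of Lemma~\ref{lem:connected} into the form demanded by Ax.4. The heavy lifting has all been done in the previous sections, so this proposition is essentially a bookkeeping exercise.
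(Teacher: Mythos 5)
Your proposal is correct and follows essentially the same route as the paper: Ax.1 via Propositions~\ref{prop:compression} and~\ref{prop:corner} (the latter applied to the projection $\ceil{p}$) together with $\sqrt{p}\ceil{p}=\sqrt{p}$, and Ax.4 via the equivalence of items 1 and 2 in Lemma~\ref{lem:connected} with $a=\sqrt{p}$. The paper merely declares Ax.2 and Ax.3 ``easy''; your computations ($\sqrt{p^2}=p$ for $p\geq 0$, and $f(\sqrt{p})=\sqrt{f(p)}$ by uniqueness of the positive square root) are exactly the intended details.
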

\begin{proof}
\emph{(Ax.1)}\ 
Let~$\mathscr{A}$
be a von Neumann algebra,
and let~$p,q\in [0,1]_\mathscr{A}$.
Since~$\ceil{p}\sqrt{p}=\sqrt{p}$
(by Prop.~\ref{prop:support-neumann}),
\begin{equation*}
p*q\ = \ 
\sqrt{p}q\sqrt{p}
\ =\ \sqrt{p}\ceil{p}q\ceil{p}\sqrt{p}
\ =\ c(\pi{p}(q)), 
\end{equation*}
where~$\pi{p}\colon \mathscr{A}\to \ceil{p}\mathscr{A}\ceil{p}$ 
is the corner of~$\ceil{p}$ from Proposition~\ref{prop:fsupp-C},
and $c\colon \ceil{p}\mathscr{A}\ceil{p}\to\mathscr{A}$
is the compression of~$p$
from Proposition~\ref{prop:initial-support}.
Thus~$*$ obeys~Ax.1.

The proof of
\emph{(Ax.2)} and \emph{(Ax.3)} is easy,
and
\emph{(Ax.4)}\ follows from
Lemma~\ref{lem:connected}.
\end{proof}

\section{Uniqueness}
We will need the following fact later on.
\begin{lem}\label{lem:lalgebra}
   Let~$f,g\colon V \to W$
        be linear maps between complex vector spaces.
    Assume that for every~$v \in V$,
        there is an~$\alpha \in \C \backslash\{0\}$
        with~$f(v) = \alpha\cdot g(v)$.

Then there is~$\alpha_0 \in \mathbb{C}\backslash \{0\}$
with~$f = \alpha_0 \cdot  g$.
\end{lem}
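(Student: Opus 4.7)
The plan is to handle the degenerate case $g \equiv 0$ first (then the hypothesis forces $f \equiv 0$ as well, and any nonzero scalar works as $\alpha_0$), and otherwise to fix a witness $v_0 \in V$ with $g(v_0) \neq 0$, let $\alpha_0 \in \C \setminus \{0\}$ be the scalar given by the hypothesis at $v_0$, and prove $f = \alpha_0 \cdot g$.

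A preliminary observation tidies up the kernel of~$g$: whenever $g(v) = 0$, the hypothesis yields some nonzero $\alpha$ with $f(v) = \alpha \cdot 0 = 0$, so $f(v) = 0 = \alpha_0 g(v)$ automatically. Thus it suffices to fix an arbitrary $v \in V$ with $g(v) \neq 0$, pick $\alpha \neq 0$ with $f(v) = \alpha g(v)$, and show $\alpha = \alpha_0$.

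The key step is a case split on whether $g(v_0)$ and $g(v)$ are linearly independent in~$W$. In the independent case, applying the hypothesis to $v_0 + v$ produces a nonzero $\beta$ with $\alpha_0 g(v_0) + \alpha g(v) = \beta g(v_0) + \beta g(v)$; independence then forces $\alpha_0 = \beta = \alpha$. In the dependent case, write $g(v) = \lambda g(v_0)$ for a (necessarily nonzero) $\lambda \in \C$; then $v - \lambda v_0$ lies in the kernel of~$g$, so by the preliminary observation $f(v) = \lambda f(v_0) = \lambda \alpha_0 g(v_0) = \alpha_0 g(v)$, and cancelling the nonzero $g(v)$ against $f(v) = \alpha g(v)$ gives $\alpha = \alpha_0$. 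The only place that requires real thought is precisely this case split, since the naive ``add $v_0$ and $v$'' trick pins down a single scalar only when the images are independent; the dependent subcase has to be routed through the kernel observation instead.
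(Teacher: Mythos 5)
Your proof is correct and follows essentially the same route as the paper's: the same case split on linear independence of the images and the same $v_0+v$ trick in the independent case. The only difference is organizational — the paper first proves the claim for injective $f,g$ and then reduces the general case by quotienting $V$ by the common kernel, whereas you absorb kernel vectors directly via your preliminary observation, which slightly streamlines the argument without changing its substance.
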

\begin{proof}
For the moment,
assume~$f$ and~$g$ are injective.
If~$V=\{0\}$, then~$\alpha_0 \equiv 1$ works,
so assume~$V \neq \{0\}$.
Pick any~$v \in V$ with~$v \neq 0$.
Let~$\alpha_0\in\mathbb{C}\backslash\{0\}$
 be such that~$f(v) = \alpha_0 \cdot g(v)$.
Let~$w \in V$.
We have to show that $f(w) = \alpha_0 \cdot g(w)$.
Now, either~$g(v)$ and~$g(w)$ are linearly dependent or not.

Suppose that~$g(v)$ and~$g(w)$ are linearly independent.
Let~$\beta \in \C \backslash \{0\}$
be such that~$f(w) = \beta \cdot g(w)$,
and let~$\gamma\in\C\backslash\{0\}$ be such 
that~$f(v+w) = \gamma\cdot g(v+w)$.
Then
\begin{equation*}
    (\gamma - \alpha_0) \cdot g(v) \,+\,  (\gamma - \beta)\cdot g(w)\ = \ 0.
\end{equation*}
By linear independence,
we have~$\gamma - \alpha_0 = 0 = \gamma - \beta$.
Hence~$\alpha_0 = \beta$, and so~$f(w) = \alpha_0 \cdot g(w)$.

Suppose that~$g(v)$ and~$g(w)$ are linearly dependent.
As~$v \neq 0$ and~$g$ is injective,
we have~$g(v) \neq 0$.
Thus~$g(w) = \varrho \cdot g(v)$ for some~$\varrho \in \C$.
Then~$g(w - \varrho \cdot v)=0$,
and so~$w= \varrho\cdot v$,
since~$g$ is injective.
We have
\begin{equation*}
    f(w) \ =\  \varrho\cdot f( v) 
    \ =\  \varrho\cdot\alpha_0 \cdot g(v) 
    \ =\  \alpha_0\cdot g(w).
\end{equation*}

Thus we have $f(w) = \alpha_0 g(w)$
whether $g(v)$ and~$g(w)$ are linearly dependent or not.

We now return to the general case
in which~$f$ and~$g$ might not be injective.
Note that the kernels of~$f$ and~$g$ coincide,
and so, writing~$N \equiv \ker f = \ker g$,
there are unique~$t,s \colon V/N \to W$
such that~$s \after q = f$
and~$t \after q = g$,
where~$q\colon V \to V/N$
is the quotient map.
Clearly, $s$ and~$t$ are injective,
and for every~$v\in V/N$
there is~$\alpha \in \mathbb{C}\backslash \{0\}$
with~$s(v) = \alpha\cdot t(v)$.
Thus, by the previous discussion,
there is $\alpha_0\in\mathbb{C}\backslash\{0\}$
with~$s = \alpha_0 \cdot t$.
Then~$f = \alpha_0\cdot g$.
\end{proof}

\begin{prop}
\label{prop:uniqueness}

For any abstract sequential product, $\aandthen$,
we have $p \aandthen q = \sqrt{p}q\sqrt{p}$,
where~$p,q\in[0,1]_\mathscr{A}$
and~$\mathscr{A}$ is a von Neumann algebra.
\end{prop}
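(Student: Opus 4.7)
The plan is to systematically strip away the ambiguity left by each of the four axioms until $p\aandthen q=\sqrt{p}q\sqrt{p}$ drops out.

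First, I would unpack Ax.1 using the universal properties of Propositions~\ref{prop:corner} and~\ref{prop:compression}. The former, applied to the projection $\ceil{p}$, exhibits $\pi\colon a\mapsto\ceil{p}a\ceil{p}$ as a corner of $\ceil{p}$, and the latter exhibits $c\colon a\mapsto\sqrt{p}a\sqrt{p}$ as a compression of $p$. Any pair $(\tilde c,\tilde\pi)$ produced by Ax.1 must therefore factor through $(c,\pi)$ via unital $*$-isomorphisms of $\ceil{p}\mathscr{A}\ceil{p}$ (unital because they are processes whose inverses are again processes). Composing these isomorphisms yields a $*$-automorphism $\gamma_p$ of $\ceil{p}\mathscr{A}\ceil{p}$ with
\begin{equation*}
p\aandthen q \ =\  \sqrt{p}\,\gamma_p(\ceil{p}q\ceil{p})\,\sqrt{p},
\end{equation*}
so the proposition reduces to showing $\gamma_p=\id$.

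Second, I would apply Ax.3 twice to specialise the ambient algebra. A faithful normal unital representation $\rho\colon\mathscr{A}\hookrightarrow\mathscr{B}(H)$ is a multiplicative process, so by Ax.3 it suffices to prove the identity in $\mathscr{B}(H)$; a further application of Ax.3 to the non-unital but multiplicative inclusion $\mathscr{B}(\ceil{p}H)\hookrightarrow\mathscr{B}(H)$ reduces to the case $\ceil{p}=1$. There $\gamma_p$ is an automorphism of a type~I factor, hence inner, say $\gamma_p(x)=u_px u_p^*$ for some unitary $u_p\in\mathscr{B}(H)$, and the goal is now $u_p\in\C\cdot 1$.

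Third comes the core analysis using Ax.4. Set $T=\sqrt{p}u_p$, so that $TT^*=p\leq 1$. Lemma~\ref{lem:connected} applied with $a=T^*$ rephrases Ax.4 as the symmetry $e_1Te_2=0\iff e_2Te_1=0$ for all projections $e_1,e_2$. Evaluating this on rank-one projections $e_i=|\xi_i\rangle\langle\xi_i|$ yields $\langle\xi_2,T\xi_1\rangle=0\iff\langle\xi_2,T^*\xi_1\rangle=0$, so that for every $\xi\in H$ the vectors $T\xi$ and $T^*\xi$ are proportional. Lemma~\ref{lem:lalgebra}, applied to $T^*$ and $T$ as linear maps $H\to H$, then upgrades this pointwise proportionality to a single identity $T^*=\lambda_0 T$ with $\lambda_0\in\C\setminus\{0\}$. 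Taking adjoints gives $|\lambda_0|=1$; comparing $T^*T=u_p^*pu_p$ with $TT^*=p$ and the relation $T^*T=\lambda_0 T^2=TT^*$ gives $u_p^*pu_p=p$, whence $u_p$ commutes with $\sqrt{p}$; substituting this into $T^2=\bar\lambda_0p$ gives $pu_p^2=\bar\lambda_0p$ and hence $u_p^2\in\C\cdot 1$ by injectivity of $p$.

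Finally, I would close the argument with Ax.2. Using the commutation $u_p\sqrt{p}=\sqrt{p}u_p$ and $u_p^2\in\C\cdot 1$, one computes
$p\aandthen(p\aandthen q)=pu_p^2q(u_p^*)^2p=pqp$,
while the same $\gamma$-description applied to $p^2$ gives
$(p\aandthen p)\aandthen q=p^2\aandthen q=pu_{p^2}qu_{p^2}^*p$.
Injectivity of $p$ then cancels to leave $u_{p^2}qu_{p^2}^*=q$ for every $q\in\mathscr{B}(H)$, whence $u_{p^2}\in\C\cdot 1$ (as $\mathscr{B}(H)$ is a factor). Replaying the same conclusion with $r=\sqrt{p}$ in place of $p$ (note $\ceil{r}=\ceil{p}=1$) gives $u_p=u_{r^2}\in\C\cdot 1$, so $\gamma_p=\id$ and $p\aandthen q=\sqrt{p}q\sqrt{p}$, as required. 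The hardest step, I expect, is extracting the commutation $u_p\sqrt{p}=\sqrt{p}u_p$ from the bare pointwise proportionality supplied by Ax.4 and Lemma~\ref{lem:lalgebra}; once it is in hand, the remaining Ax.2 manipulation is essentially automatic.
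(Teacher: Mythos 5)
Your proposal is correct and follows essentially the same route as the paper's proof: Ax.1 plus the two universal properties give $p\aandthen q=\sqrt{p}\,\gamma_p(\ceil{p}q\ceil{p})\,\sqrt{p}$ for an automorphism $\gamma_p$ which is inner by the type~I factor argument, Ax.4 evaluated on rank-one projections together with Lemma~\ref{lem:lalgebra} forces $u_p$ to commute with $p$ and $u_p^2$ to be scalar, Ax.2 then yields $p^2\aandthen q=pqp$, and the substitution $p\mapsto\sqrt{p}$ plus Ax.3 for a faithful representation finish the argument. The only (harmless) deviations are that you reduce to injective $p$ via an extra use of Ax.3 and cancel $p$ directly, where the paper instead keeps $u\in\ceil{p}\mathscr{A}\ceil{p}$ and obtains centrality of $u^2$ from the uniqueness clause of the compression $c$ together with factoriality of $\mathscr{B}(\mathscr{H})$.
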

\begin{proof}
Let~$\mathscr{A}$ be a von Neumann algebra,
and~$p \in [0,1]_{\mathscr{A}}$.
By~Ax.1 there is a corner~$\tilde{\pi}$ of~$\ceil{p}$
and a compression~$\tilde{c}$ of~$p$
such that~$p \aandthen q= \tilde{c}(\tilde{\pi}(q))$
for all~$q\in [0,1]_\mathscr{A}$.

Let~$c\colon \ceil{p}\mathscr{A}\ceil{p}\to\mathscr{A}$
be the compression of~$p$ 
given by~$c(a)=\sqrt{p}a\sqrt{p}$ for all~$a\in \ceil{p}\mathscr{A}\ceil{p}$
(see Proposition~\ref{prop:compression}).
Since both~$c$ and~$\tilde c$ are
compressions of~$p$
it is easy to see that there is 
an invertible process $\vartheta$
such that $\tilde c = c \circ \vartheta$.
In fact,
$\vartheta$ is a $*$-isomorphism
by 
Corollary~\ref{cor:invprocmult}.

Similarly,
$\tilde\pi = \chi\circ \pi$
where~$\chi$
is some $*$-isomorphism~$\chi$,
and~$\pi\colon \mathscr{A} \to \ceil{p}\mathscr{A}\ceil{p}$
is the corner of~$\ceil{p}$ 
given by~$\pi(a) = \ceil{p}a\ceil{p}$
for all~$a\in\mathscr{A}$
(see Proposition~\ref{prop:corner}).

Thus~$p\aandthen q = \sqrt{p} \,\psi(\,\ceil{p} q\ceil{p}\,)\, \sqrt{p}$
for all~$q\in[0,1]_\mathscr{A}$,
where~$\psi=\vartheta\circ \chi$
is a $*$-automorphism of~$\ceil{p}  \mathscr{A} \ceil{p}$.

Roughly speaking,
our goal is to prove~$\psi=\id$.
We will first consider the 
case that~$\mathscr{A} = \mathscr{B}(\mathscr{H})$.
Since~$\ceil{p} \mathscr{B}(\mathscr{H}) \ceil{p}$
is a type~I factor (i.e.~$*$-isomorphic
to~$\mathscr{B}(\mathscr{K})$ for some Hilbert
space~$\mathscr{K}$), it is known\cite{kaplansky1952}
that~$\psi$ must be an inner $*$-automorphism, that is,
there is a unitary~$u \in \ceil{p} \mathscr{B}(\mathscr{H}) \ceil{p}$
such that~$\psi(a) = u^* a u$
for all~$a \in \ceil{p} \mathscr{B}(\mathscr{H}) \ceil{p}$.
Note that $\ceil{p}u = u$ since~$u \in \ceil{p}\mathscr{B}(\mathscr{H})\ceil{p}$.
Thus we have, for all~$b \in [0,1]_{\mathscr{B}(\mathscr{H})}$,
\begin{equation}
    p \aandthen b \ =\  \sqrt{p} u^* b u \sqrt{p}.\label{eq:formaandthen}
\end{equation}
We aim to show that~$u=1$,
or at least that $u=\alpha 1$ for some~$\alpha \in \C$
with~$|\alpha|=1$.

Our first step is to prove that~$u p = p u$.
To this end, we extract some information about~$u$
from~Ax.4.
First, note that for vectors~$v,w\in \mathscr{H}$ with~$\|w\|=1$
and~$\|v\| \leq 1$,
\begin{equation}\label{eq:orthvecproj}
\ket{v}\!\bra{v} \ \leq\  1- \ket{w}\!\bra{w}
\quad\text{if and only if}\quad \left<v,w\right>\,=\,0.
\end{equation}
For any $v \in \mathscr{H}$ with~$\|v\|=1$, 
\begin{equation}
\label{eq:aandthenonrankone}
p \aandthen \ket{v}\!\bra{v}
\ =\  \sqrt{p} u^* \ket{v}\!\bra{v} u \sqrt{p}
\ =\  \ket{\sqrt{p} u^* v}\!\bra{\sqrt{p} u^* v}.
\end{equation}
For all~$v,w\in \mathscr{H}$ with  $\|v\|=\|w\|=1$,
the following are equivalent
\begin{alignat*}{3}
\left<\sqrt{p}u^*v,w\right>\ &=\ 0\\
\ket{\sqrt{p}u^*v }\!\bra{\sqrt{p}u^*v} 
\ &\leq\ 1-\ket{w}\!\bra{w}
\qquad &\text{by~\eqref{eq:orthvecproj}} \\
p\aandthen \ket{v}\!\bra{v} 
\ &\leq\ 1-\ket{w}\!\bra{w}
	\qquad&\text{by~\eqref{eq:aandthenonrankone}}\\
p\aandthen \ket{w}\!\bra{w} 
\ &\leq\ 1-\ket{v}\!\bra{v}
\qquad&\text{by~Ax.4}\\
&\vdots\\
\left<\sqrt{p}u^*w,v\right>\ &=\ 0\\
\left<u\sqrt{p}v,w\right>\ &=\ 0
\end{alignat*}
Thus~$\sqrt{p} u^*v$
    and~$u \sqrt{p} v$
    are orthogonal to the same vectors,
and so there is~$\alpha \in \C\backslash\{0\}$
with
\begin{equation*}
    \sqrt{p} u^* v \ =\  \alpha\cdot u \sqrt{p} v.
\end{equation*}
By scaling it is clear that this statement is also true for
all~$v \in \mathscr{H}$ (and not just for~$v$ with~$\|v\|=1$).

Although a priori~$\alpha$ might depend on~$v$,
we know by Lemma~\ref{lem:lalgebra}
that there is an~$\alpha \in \C \backslash \{ 0\}$
such that~$\sqrt{p}u^* = \alpha \cdot u \sqrt{p}$.
It follows that $p = \sqrt{p} u^* u \sqrt{p}
    = \alpha \cdot u \sqrt{p}  u \sqrt{p}
    = u \sqrt{p} \sqrt{p} u^*
    = u p u^*$,
and so~$p u = u p$.
Then also~$\sqrt{p} u = u \sqrt{p}$
(see Corollary~\ref{cor:commutes-sqrt}),
and thus~$\sqrt{p} u^* = \alpha u \sqrt{p} = \alpha \sqrt{p} u$.

Note that~$(\sqrt{p}u^*)^* = u \sqrt{p}$,
and so~$u \sqrt{p} = \alpha^* \sqrt{p} u^* = \alpha^* \alpha u \sqrt{p}$.
Then if~$u\sqrt{p} \neq 0$,
we get~$\alpha^*\alpha = 1$,
and if~$u \sqrt{p}=0$, we can put~$\alpha=1$
and still have both~$\sqrt{p} u^* = \alpha \sqrt{p}u$
and~$\alpha^*\alpha=1$.
It follows that, for all~$b \in \mathscr{B}(\mathscr{H})$,
\begin{equation*}
    c(u^*bu)=
    \sqrt{p} u^* b u \sqrt{p}
        = \sqrt{p} u b u^* \sqrt{p}
        = c(ubu^*),
\end{equation*}
where~$c$ is the compression of~$p$ from Proposition~\ref{prop:compression}.
By the universal property of~$c$ 
we get~$u^* (-) u = u (-)u^*$,
and thus~$u^2b = bu^2$ for all~$b \in \mathscr{B}(\mathscr{H})$.
Hence~$u^2$ is central in~$\mathscr{B(}\mathscr{H})$.
Since~$\mathscr{B}(\mathscr{H})$ is a factor, we get~$u^2 = \lambda \cdot 1$
for some~$\lambda \in \C$ with~$| \lambda | = 1$.

Since~$p$ commutes with~$u$,
we easily get~$p\aandthen p = p^2$.
Then from~Ax.2 it follows that
\begin{align*}
    p^2 \aandthen q\  &=\ (p \aandthen p)*q  \\
    &=\  p * (p \aandthen q) \\ 
    &=\  \sqrt{p} u^* \,\sqrt{p} u^*\,q\,u \sqrt{p}\,
            u \sqrt{p} \\  &=\  pqp.
\end{align*}
Thus,
if we repeat
the whole argument
with~$p$ replaced by~$\sqrt{p}$,
we see that $p\aandthen q = \sqrt{p}q\sqrt{p}$.

Let us now consider the general
case in which~$\mathscr{A}$
may not be $*$-isomorphic to~$\mathscr{B}(\mathscr{H})$
for some Hilbert space~$\mathscr{H}$,
but is instead
(without loss of generality)
 a von Neumann subalgebra
of~$\mathscr{B}(\mathscr{H})$ for some Hilbert space~$\mathscr{H}$
(see Theorem~\ref{thm:KGNS}).
Let~$q\in [0,1]_\mathscr{A}$.
Since the inclusion~$\varrho\colon \mathscr{A}\to\mathscr{B}(\mathscr{A})$
is a multiplicative process, we have
$\varrho(p \aandthen q) =  \varrho(p)\aandthen \varrho(q)
=\sqrt{\varrho(p)} \varrho(q)\sqrt{\varrho(p)}
= \varrho(\sqrt{p}q\sqrt{p})$.
Since~$\varrho$ is injective,
we conclude that~$ p \aandthen q = \sqrt{p} q \sqrt{p}$.
\end{proof}

\begin{proof}[Proof of Theorem~\ref{thm:main}]
By Proposition~\ref{prop:existence},
the sequential product~$*$
(given by $p*q=\sqrt{p}q\sqrt{p}$)
is an abstract sequential product,
and~$*$ is the only abstract sequential product
by Proposition~\ref{prop:uniqueness}
\end{proof}

\section*{Remarks}
\begin{rem}\label{rem:axioms}
Gudder and Lat\'emoli\`ere (G\&L) 
showed
in~\cite{gudder}
that the sequential product
on the effects of a Hilbert space~$\mathscr{H}$
is the only binary operation~$\aandthen$
that satisfies the following axioms.
For all~$a,b\in [0,1]_{\mathscr{B}(\mathscr{H})}$,
and every density operator~$\varrho$ on~$\mathscr{H}$,
\begin{enumerate}
    \item[GL1.] $\tr[(a \aandthen \varrho) b] = \tr[\varrho (a \aandthen b)]$;
    \item[GL2.] $a \aandthen 1 = 1 \aandthen a = a$;
    \item[GL3.] $a \aandthen (a \aandthen b) = (a \aandthen a) \aandthen b
                = a^2 \aandthen b$, and
    \item[GL4.] $a \mapsto a \aandthen b$ is strongly continuous.
\end{enumerate}
Let us compare their proof of uniqueness
with our proof of uniqueness of the abstract sequential product.
The broad strokes are similar:
in both proofs it is shown 
\begin{enumerate}
\item
first that
$p\aandthen q = \sqrt{p}u^* qu\sqrt{p}$
for appropriate~$u$;
\item
then that~$p^2 \aandthen q = pqp$
using Ax.4 and GL1 resp.,
\item
and finally $p \aandthen q = \sqrt{p}q\sqrt{p}$
  is obtained using GL3 and Ax.2 respectively.
  \end{enumerate}

However,
the short strokes are quite different.  
For instance,
while GL3 and Ax.2 clearly serve the same purpose in both proofs
(enabling the third step mentioned above),
the relation between GL1 and its analogue, Ax.4,
is less clear: Ax.4 only comes into play at the second step,
while GL1 is important in both the first and second steps.
Also,
the proof of G\&L has a branch in the first step
(case~iii on page~9 of~\cite{gudder}),
which has no companion in our proof.
\end{rem}

\begin{rem}\label{rem:cats}
The universal properties
of the compression~$c$ (from Proposition~\ref{prop:compression})
and of the corner~$\pi$ (from Proposition~\ref{prop:corner})
may be cast into the following chain of adjunctions.
\begin{equation*}
\label{diag:vn}
\vcenter{\xymatrix{
\int\square\ar[d]_{\dashv\;}^{\;\dashv}
   \ar@/_6ex/[d]^{\;\dashv}_{\begin{array}{c}\scriptstyle\mathrm{Compression} \\[-.5em]
                   \scriptstyle (p\in\Eff{\mathscr{A}}) \mapsto 
   \ceil{p^\perp}\mathscr{A}\ceil{p^\perp}\end{array}} 
   \ar@/^6ex/[d]_{\dashv\;}^{\begin{array}{c}\scriptstyle\mathrm{Corner} \\[-.5em]
                   \scriptstyle(p\in\Eff{\mathscr{A}}) \mapsto 
   \floor{p}\mathscr{A}\floor{p}\end{array}} \\
   \vN^\op\ar@/^3ex/[u]^(0.4){0\!}\ar@/_3ex/[u]_(0.4){\!1}
}}
\end{equation*}
Here,~$\vN$ is the category of von Neumann algebras
and
processes,
and
$\square$ is the functor~$\vN^\op \to \Cat{Poset}^\op$
given by~$\square (\mathscr{A}) = \Eff{\mathscr{A}}$
and~$\square(f)(p) = f(p^\perp)^\perp$,
and~$\int \square$ is its Grothendieck completion.
Such chains
appear in several other categories
and provide
a tool
to study the sequential product in other settings (see~\cite{telos}).
\end{rem}
\begin{rem}\label{rem:indep}
We have shown
that Ax.1, Ax.2, Ax.3, Ax.4
axiomatize the sequential product.
A natural question
is whether three of them would have sufficed.
We will show that
Ax.1, Ax.2, and Ax.4 cannot be dropped.
We do not know whether Ax.3 is redundant.

(Of course,
instead of being dropped,
the axioms may also be weakened. 
For example,
Ax.3 is used only
with~$f$ a representation, and Ax.4 is only used with~$e_1,e_2$ 
rank one projections.)
\begin{enumerate}
\item[Ax.1] The operation~$\aandthen$ given by
$p \aandthen q \equiv pqp$ satisfies Ax.2, Ax.3, and Ax.4,
but not Ax.1.
\item[Ax.2]
Observe that
if we pick
for every effect~$p$ 
on a von Neumann algebra~$\mathscr{A}$
a unitary~$u_p$ from~$\ceil{p}\mathscr{A}\ceil{p}$,
then we may form an operation~$\aandthen$
on all effects
by $p \aandthen q \equiv \sqrt{p} u_p^* q u_p \sqrt{p}$,
which satisfies Ax.1.

Further,
note that
if~$u_p^2 = u_{p^2}$ for all~$p$, then~$\aandthen$ satisfies~Ax.2;
and if~$f(u_p) = u_{f(p)}$ for any
unital~$*$-homomorphism~$f$,
    then~$\aandthen$ obeys Ax.3;
and if every~$u_p$ is self-adjoint, then~$\aandthen$ satisfies Ax.4.

Define~$u_p$
by~$u_p=g(p)$,
where $g\colon [0,1] \to \{-1,1\}$ 
is any
Borel function
with~$g(\nicefrac{2}{3})=1$
and $g(\nicefrac{4}{9})=-1$.
Then clearly~$\aandthen$ (defined by~$u_p$)
satisfies Ax.1, Ax.3, and Ax.4.
Also, $\aandthen$ does not satisfy Ax.2,
because 
for~$p = \smash{\left(\begin{smallmatrix}
1 & 0 \\ 0 & \nicefrac{2}{3}
\end{smallmatrix}\right)}$
in~$M_2$
we have~$u_p=
\smash{\left(\begin{smallmatrix}
1 & 0 \\
0 & 1
\end{smallmatrix} \right)}$,
while~$u_{p^2} = 
\smash{\left(\begin{smallmatrix}
1 & 0 \\
0 & -1
\end{smallmatrix} \right)}$,
and so~$(p\aandthen p)\aandthen q\neq p\aandthen(p\aandthen q)$,
where~$q=\frac{1}{\sqrt{2}}
\smash{\left(\begin{smallmatrix}
1 & 1 \\
1 & 1
\end{smallmatrix} \right)}$.

\item[Ax.4]
Pick a Borel function~$g\colon [0,1] \to S^1$
such that~$g(\nicefrac{1}{2})\neq 1$
and~$g(\lambda)^2 = g(\lambda^2)$
for all~$\lambda\in [0,1]$.
Then~$\aandthen$
given by $p\aandthen q= \sqrt{p}g(p)^*\, q\, g(p)\sqrt{p}$
obeys Ax.1, Ax.2, and Ax.3, but not Ax.4.
\end{enumerate}
\end{rem}
\begin{prob}
Do Ax.1, Ax.2 and Ax.4 imply Ax.3?
\end{prob}
\section*{Acknowledgements}
Sam Staton
suggested
we should look for a universal property for~$c$
from Proposition~\ref{prop:compression}.
Bart Jacobs noted the
chain of adjunctions
from Remark~\ref{rem:cats}.
Robert Furber pointed us towards
the spectral theorem
for the proof of Proposition~\ref{prop:compression}.
We thank them, and Kenta Cho
for their friendly help.

We have received funding from the
European Research Council under grant agreement \textnumero~320571.
\bibliography{main}{}
\bibliographystyle{alpha}

\appendix
\section{$C^*$-algebras}
\begin{trm}
        \begin{enumerate}
		\item
			A \textbf{$C^*$-algebra}~$\mathscr{A}$
			is a complete normed complex vector space
			endowed with a bilinear associative product
			and an antilinear map
			 $(-)^*\colon\mathscr{A}\to\mathscr{A}$
			 such that $a^{**}=a$, 
			 $(ab)^*=b^*a^*$,
			 $\|ab\|\leq \|a\|\|b\|$,
			 and$\|a^*a\|=\|a\|^2$
			 for all~$a,b\in\mathscr{A}$.

			 (The last equation is called the 
			 \emph{$C^*$-identity}.)
		\item
			An element $a$ of a $C^*$-algebra~$\mathscr{A}$
			is called 
			\begin{enumerate}
			\item
			\textbf{positive}
			if $a\equiv b^* b$ for some~$b\in \mathscr{A}$;
			\item
			\textbf{self-adjoint}
			if $a^* = a$;
			\item
			a \textbf{projection}
			if $a^*a=a$;
			\item
			\textbf{central}
			if~$ab=ba$ for all~$b\in\mathscr{A}$;
			\item
			a \textbf{unit}
			if~$ab=ba=b$ for all~$b\in\mathscr{A}$.
			\end{enumerate}
                        The set of positive elements of~$\mathscr{A}$
                        is denoted by~$\pos{\mathscr{A}}$,
                        and the set of self-adjoint elements of~$\mathscr{A}$
                         by~$\sa{\mathscr{A}}$.
		\item
			A $C^*$-algebra
			is partially ordered by as follows.
			For all~$a,b\in \mathscr{A}$,
			we have  $a\leq b$
			iff $b-a$ is positive.
		
		\item
			A $C^*$-algebra~$\mathscr{A}$ is 
			\begin{enumerate}
			\item
			\textbf{unital}
			if~$\mathscr{A}$ contains a unit,~$1$;
			\item
			\textbf{commutative}
			if~$ab=ba$ for all~$a,b\in\mathscr{A}$;
			\item 
			a \textbf{factor}
			if~$\mathscr{S}$ is unital
			and all its central elements are of the 
			form~$\lambda\cdot 1$ where~$\lambda\in \C$.
			\end{enumerate}
		\item
			Let~$\mathscr{A}$ and~$\mathscr{B}$
			be $C^*$-algebras.
			A linear map $f\colon \mathscr{A}\to\mathscr{B}$
			is called 
			\begin{enumerate}
			\item
			\textbf{bounded} if $\|f\|<\infty$,
			where
			\begin{equation*}
			\|f\|\ =\ \sup\{ \,\lambda\in [0,\infty)\colon\,
			\forall a\in\mathscr{A}[\ \|f(a)\|\leq \lambda \|a\|
			\ ]\,\}.
			\end{equation*}

			\item
			\textbf{contractive} if $\|f\|\leq 1$;
			\item
			a \textbf{$*$-homomorphism} if
			$f(ab)=f(a)f(b)$
			and $f(a^*)=f(a)^*$ for all~$a,b\in\mathscr{A}$;
			\item
			a \textbf{$*$-isomorphism}
			if~$f$ is a bijective $*$-homomorphism;
			\item
			\textbf{positive} if $f(a)\in\mathscr{B}_+$
			for all~$a\in\mathscr{A}_+$;
			\item
			\textbf{unital} if~$\mathscr{A}$ and~$\mathscr{B}$
			are unital, and~$f(1)=1$;
			\item
                        \textbf{normal}
                        if for every directed subset~$D$ of self-adjoint
                        elements of~$\mathscr{A}$:
                        if~$D$ has a supremum~$\bigvee D$ in~$\sa{\mathscr{A}}$,
                        then~$f(\bigvee D)$
                        is the supremum of~$\{f(d)\colon d\in D\}$
                        in~$\sa{\mathscr{B}}$;
			\item
			a \textbf{process}
			if~$f$ is normal, completely positive
				and contractive.
			\end{enumerate}
		\item
			Let~$\mathscr{A}$ be a unital $C^*$-algebra.
                         A \textbf{state} of~$\mathscr{A}$
                         is a positive unital 
                         linear map~$\varphi\colon \mathscr{A}\to
                         \C$.
		\item
			A \textbf{$C^*$-subalgebra}
			 of a $C^*$-algebra~$\mathscr{A}$
			is a norm closed linear subspace~$S$ of~$\mathscr{A}$
			such that $ab\in \mathscr{S}$ and $a^*\in \mathscr{S}$
			for all $a,b\in \mathscr{S}$.
			(Such a set~$\mathscr{S}$
			is itself a $C^*$-algebra
			in the obvious way.)
		\item
			For every positive element~$a$
			of a $C^*$-algebra~$\mathscr{A}$
			there is a unique positive $b\in\mathscr{A}$
			with~$a=b^2$ and $ba=ab$.
			We write $\sqrt{a}=b$.
	\end{enumerate}
\end{trm}
\begin{exa}
Let~$X$ be a compact Hausdorff space.
The commutative unital \emph{$C^*$-algebra
of continuous functions on~$X$}
is the set~$C(X)$ of continuous complex-valued functions on~$X$
 endowed with the supremum norm
and coordinatewise operations.
\end{exa}
\begin{thm}[Gel'fand--Neumark]
\label{thm:gelfand}
Every commutative unital $C^*$-algebra
is $*$-isomorphic to a $C^*$-algebra
of continuous functions on a compact Hausdorff space.
\end{thm}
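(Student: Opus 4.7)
The plan is to construct the compact Hausdorff space $X$ from $\mathscr{A}$ itself as its ``spectrum'' of characters, and then exhibit the required $*$-isomorphism as a concrete evaluation map. Specifically, let $X$ denote the set of characters of $\mathscr{A}$, i.e.\ nonzero unital $*$-homomorphisms $\chi \colon \mathscr{A} \to \mathbb{C}$ (equivalently, by automatic continuity in the commutative unital setting, nonzero multiplicative linear functionals), topologised as a subspace of the dual $\mathscr{A}^*$ with the weak-$*$ topology. Each character has norm $1$, and the conditions $\chi(1)=1$ and $\chi(ab)=\chi(a)\chi(b)$ are weak-$*$ closed, so $X$ is a weak-$*$ closed subset of the unit ball of $\mathscr{A}^*$; by Banach--Alaoglu, $X$ is then compact Hausdorff.

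Next, I would define the \emph{Gelfand transform} $\Gamma \colon \mathscr{A} \to C(X)$ by $\Gamma(a)(\chi) = \chi(a)$. By definition of the topology on $X$, each $\Gamma(a)$ is continuous, and $\Gamma$ is a unital $*$-homomorphism because every $\chi \in X$ is. The nontrivial input is that $\chi(a^*) = \overline{\chi(a)}$, which, after splitting into real and imaginary parts, reduces to $\chi(a) \in \mathbb{R}$ for self-adjoint $a$; this in turn follows from two classical facts: in a commutative unital Banach algebra, the spectrum of $a$ in $\mathscr{A}$ equals $\{\chi(a) : \chi \in X\}$, and in a unital $C^*$-algebra the spectrum of a self-adjoint element lies in $\mathbb{R}$. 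Isometricity of $\Gamma$ now comes from the $C^*$-identity: $\|a\|^2 = \|a^*a\|$, and for the self-adjoint element $b = a^*a$ we have $\|b\| = r(b) = \sup_{\chi}|\chi(b)|$, where the first equality is the spectral radius formula for self-adjoints (again a consequence of the $C^*$-identity, iterated). Thus $\|a\|^2 = \sup_{\chi}|\chi(a^*a)| = \sup_{\chi}|\chi(a)|^2 = \|\Gamma(a)\|_\infty^2$, so $\Gamma$ is isometric, hence injective with closed range.

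To finish I would invoke the Stone--Weierstrass theorem: the image $\Gamma(\mathscr{A})$ is a norm-closed unital $*$-subalgebra of $C(X)$ that separates points of $X$ (if $\chi_1 \neq \chi_2$, pick $a$ with $\chi_1(a) \neq \chi_2(a)$, so that $\Gamma(a)$ separates them), and therefore equals all of $C(X)$. The step requiring the most care is the bridge between the algebraic and the analytic side --- establishing that characters are $*$-preserving and that the spectral radius of a self-adjoint element coincides with its norm; both rest on the spectral theory of unital $C^*$-algebras. Once these ``spectral'' ingredients are in hand, the compactness of $X$ via Banach--Alaoglu and the surjectivity via Stone--Weierstrass are essentially formal.
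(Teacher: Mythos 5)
Your proposal is correct and is precisely the standard Gelfand-transform argument (characters with the weak-$*$ topology, Banach--Alaoglu for compactness, the $C^*$-identity for isometry, Stone--Weierstrass for surjectivity). The paper simply delegates to Theorem~2.1 of Conway's chapter~VIII, whose proof is this same argument, so there is no substantive difference in approach.
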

\begin{proof}
Apply Theorem~2.1 of~\cite{conway1990}.
\end{proof}
\begin{exa}
\label{exa:BH}
Let~$\mathscr{H}$ be a Hilbert space.
The bounded operators
on~$\mathscr{H}$
form a unital $C^*$-algebra,~$\mathscr{B}(\mathscr{H})$,
in which the product is given by composition,
$(-)^*$ is the adjoint,
and the norm is the operator norm.
Moreover, $\mathscr{B}(\mathscr{H})$
is a factor
(of ``type I''),
and~$A\in\mathscr{B}(\mathscr{H})$
is positive iff $0\leq\left<x,Ax\right>$
for all~$x\in \mathscr{H}$.

A \emph{$C^*$-algebra of bounded operators on~$\mathscr{H}$}
is a $C^*$-subalgebra~$\mathscr{B}(\mathscr{H})$ 
of bounded operators on~$\mathscr{H}$
(but need not be a factor).
\end{exa}
\begin{thm}[Gel'fand--Neumark--Segal]
\label{thm:gns}
Every unital $C^*$-algebra
is $*$-isomorphic
to a $C^*$-algebra of bounded operators on a Hilbert space.
\end{thm}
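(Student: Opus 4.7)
The plan is to carry out the standard Gel'fand--Neumark--Segal construction, which proceeds in three stages: build a representation from a single state, produce enough states to separate points, and then take a direct sum.

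First I would associate to every state $\varphi\colon \mathscr{A}\to\C$ a cyclic $*$-representation. The sesquilinear form $(a,b)\mapsto \varphi(a^*b)$ is positive semidefinite on $\mathscr{A}$, and the Cauchy--Schwarz inequality for states (a consequence of positivity, cf.\ the inequality used in the proof of Proposition~\ref{prop:corner}) shows that $N_\varphi=\{a\in\mathscr{A}\colon \varphi(a^*a)=0\}$ is a closed left ideal. The quotient $\mathscr{A}/N_\varphi$ carries a genuine inner product $\langle [a],[b]\rangle = \varphi(a^*b)$; its completion is a Hilbert space $\mathscr{H}_\varphi$. Left multiplication $\pi_\varphi(a)[b]=[ab]$ is well defined on $\mathscr{A}/N_\varphi$ because $N_\varphi$ is a left ideal, and one checks that $\|\pi_\varphi(a)[b]\|^2=\varphi(b^*a^*ab)\leq \|a\|^2\varphi(b^*b) = \|a\|^2\|[b]\|^2$ (using that $a^*a \leq \|a\|^2\cdot 1$ and positivity of $\varphi$), so $\pi_\varphi(a)$ extends to a bounded operator on $\mathscr{H}_\varphi$. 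Verifying that $\pi_\varphi$ is a unital $*$-homomorphism $\mathscr{A}\to\mathscr{B}(\mathscr{H}_\varphi)$ is then routine.

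Second, I would show that the states separate the elements of $\mathscr{A}$ in the strong sense that for every nonzero $a\in\mathscr{A}$ there is a state $\varphi$ with $\varphi(a^*a)=\|a^*a\|=\|a\|^2$. Apply the Gel'fand--Neumark theorem (Theorem~\ref{thm:gelfand}) to the commutative unital $C^*$-subalgebra $\mathscr{C}$ generated by $1$ and the self-adjoint element $a^*a$: this identifies $\mathscr{C}$ with $C(X)$ for some compact Hausdorff $X$, and evaluation at a point of $X$ where $|\widehat{a^*a}|$ attains its supremum yields a state $\varphi_0$ on $\mathscr{C}$ with $\varphi_0(a^*a)=\|a^*a\|$. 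Extend $\varphi_0$ to a state $\varphi$ on all of $\mathscr{A}$ via Hahn--Banach: the norm-preserving extension of a positive unital linear functional on a unital $C^*$-subalgebra is automatically positive (because for a unital bounded linear functional, positivity is equivalent to $\|\varphi\|=\varphi(1)$). Then $\pi_\varphi(a)$ is nonzero since its norm squared is at least $\varphi(a^*a)/\varphi(1)=\|a\|^2$.

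Third, I would form $\pi=\bigoplus_{\varphi}\pi_\varphi\colon \mathscr{A}\to\mathscr{B}\bigl(\bigoplus_\varphi \mathscr{H}_\varphi\bigr)$, where $\varphi$ runs over all states of $\mathscr{A}$. This is a unital $*$-homomorphism, and by the previous step it is injective. A standard fact about $C^*$-algebras is that an injective $*$-homomorphism is automatically isometric (because the spectrum of $a^*a$ is preserved: any $\lambda$ in the resolvent set of $\pi(a^*a)$ is also in that of $a^*a$, using that $(\lambda - a^*a)$ is invertible iff $\pi(\lambda-a^*a)$ is, which follows from the continuous functional calculus applied to the self-adjoint element $a^*a$). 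Hence $\pi(\mathscr{A})$ is norm-closed and $\pi$ is a $*$-isomorphism onto a $C^*$-subalgebra of $\mathscr{B}\bigl(\bigoplus_\varphi \mathscr{H}_\varphi\bigr)$, as required.

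The main obstacle is the second step, namely producing a state that witnesses $\|a\|^2$ on $a^*a$; this is where one genuinely leaves the realm of algebraic manipulation and invokes both the Gel'fand representation for the commutative subalgebra and the Hahn--Banach theorem (together with the positivity-from-norm criterion for extensions). The first and third steps are then essentially bookkeeping once the GNS construction and the automatic isometry of injective $*$-homomorphisms are in hand.
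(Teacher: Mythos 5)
Your argument is correct: it is the standard GNS construction (cyclic representation from each state, existence of a norm-attaining state via the Gel'fand representation of the commutative subalgebra generated by $a^*a$ plus a positive Hahn--Banach extension, then the direct sum and the automatic isometry of injective $*$-homomorphisms). The paper does not spell out a proof at all --- it simply defers to Theorem~5.17 of Conway --- and that cited theorem is established by exactly the construction you describe, so your proposal matches the intended argument.
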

\begin{proof}
Unfold Theorem 5.17 of~\cite{conway1990}.
\end{proof}

The norm determines the order:
\begin{lem}
\label{lem:order-via-norm}
Let~$\mathscr{A}$
be a unital $C^*$-algebra,
and  $a\in\mathscr{A}_\mathrm{sa}$.
Then
$a\geq 0$\  iff\  $\|\,\|a\|-a\,\|\,\leq\,\|a\|$.
\end{lem}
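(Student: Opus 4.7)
The plan is to reduce the statement to an obvious claim about real-valued continuous functions via Theorem~\ref{thm:gelfand}. Since~$a$ is self-adjoint, the norm-closed unital $*$-subalgebra~$\mathscr{S}$ of~$\mathscr{A}$ generated by~$a$ and~$1$ is a commutative unital $C^*$-subalgebra. By Gel'fand--Neumark, there is a $*$-isomorphism $\Phi\colon \mathscr{S}\to C(X)$ for some compact Hausdorff space~$X$. Writing $\hat{a}=\Phi(a)$, the function $\hat{a}$ is real-valued and continuous, and $\|\hat{a}\|_\infty = \|a\|$ because $*$-isomorphisms of $C^*$-algebras are isometric.

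In~$C(X)$, positivity is just pointwise nonnegativity, and the equivalence becomes elementary. If $\hat{a}\geq 0$, then $0\leq \hat{a}(x)\leq \|\hat{a}\|_\infty$ for all~$x\in X$, whence $|\|\hat{a}\|_\infty -\hat{a}(x)|\leq \|\hat{a}\|_\infty$, and so $\|\,\|a\|-a\,\|\leq \|a\|$. Conversely, if $\|\,\|a\|-a\,\|\leq \|a\|$, then $|\|\hat{a}\|_\infty - \hat{a}(x)|\leq \|\hat{a}\|_\infty$ for every $x\in X$, which forces $0\leq \hat{a}(x)\leq 2\|\hat{a}\|_\infty$, and hence $\hat{a}\geq 0$.

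The one step requiring care---and the main (though standard) obstacle---is verifying that positivity in~$\mathscr{S}$ coincides with positivity in~$\mathscr{A}$, so that the translation through~$\Phi$ is faithful. This rests on spectral permanence for $C^*$-algebras: an element of a unital $C^*$-algebra is positive iff it is self-adjoint and its spectrum lies in~$[0,\infty)$, and the spectrum of an element of a unital $C^*$-subalgebra containing the unit is unchanged when computed in the larger algebra (unlike in the purely Banach-algebra setting). Given this, $\hat{a}\geq 0$ in~$C(X)$ translates directly to $a\geq 0$ in~$\mathscr{A}$, closing the argument.
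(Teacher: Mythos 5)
Your argument is correct, but it is genuinely different from the paper's, which offers no proof at all: it simply cites VIII/Theorem~3.6 of Conway's \emph{A Course in Functional Analysis}. You instead give the standard self-contained reduction to the commutative case: pass to the unital $C^*$-subalgebra generated by $a$, apply Gel'fand--Neumark (Theorem~\ref{thm:gelfand}), and check the inequality pointwise for a real-valued continuous function, where both directions are immediate. The one delicate point, which you correctly isolate, is that positivity must be the same whether computed in the subalgebra or in $\mathscr{A}$. Here you must \emph{not} appeal to the paper's Corollary~\ref{cor:order-norm-permanence}, since that corollary is itself derived (via the proposition preceding it) from the very lemma you are proving; your appeal to spectral permanence --- the spectrum of an element of a unital $C^*$-subalgebra containing the unit agrees with its spectrum in the ambient algebra, and positivity is equivalent to self-adjointness plus spectrum in $[0,\infty)$ --- is an independent standard fact and avoids the circularity. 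The net effect is that your version makes the paper self-contained at the cost of importing spectral permanence, while the paper's citation keeps the appendix short; both are legitimate.
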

\begin{proof}
See VIII/Theorem~3.6 of~\cite{conway1990}.
\end{proof}

\begin{prop}
Let~$\mathscr{A}$ and~$\mathscr{B}$
be unital $C^*$-algebras,
and 
let~$f\colon\mathscr{A}\to\mathscr{B}$
be a unital $*$-homomorphism.

Then~$f$ is contractive,
and $f(\mathscr{A})$ is norm closed
and in fact a $C^*$-subalgebra of~$\mathscr{B}$.

Moreover,
if $f$ is injective,
then, for all~$a\in \mathscr{A}$,
we have 
 $\|f(a)\|=\|a\|$,
 and $f(a)\geq 0$ iff~$a\geq 0$.
\end{prop}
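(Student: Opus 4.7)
The plan is to establish contractivity first, then the subalgebra property, and finally the injective case. For contractivity, I first note that $f$ is positive: since $f(b^{*}b) = f(b)^{*}f(b)$, positive elements map to positive elements. Next, for self-adjoint $a \in \mathscr{A}$, I would argue $\sigma(f(a)) \subseteq \sigma(a)$: if $a - \lambda$ is invertible in $\mathscr{A}$, then $f((a-\lambda)^{-1})$ is an inverse to $f(a) - \lambda$ in $\mathscr{B}$. Since for self-adjoint elements the norm equals the spectral radius, this gives $\|f(a)\| \leq \|a\|$ on $\sa{\mathscr{A}}$. For general $a$, the $C^{*}$-identity yields
\begin{equation*}
\|f(a)\|^{2} \,=\, \|f(a)^{*}f(a)\| \,=\, \|f(a^{*}a)\| \,\leq\, \|a^{*}a\| \,=\, \|a\|^{2}.
\end{equation*}

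To show $f(\mathscr{A})$ is a $C^{*}$-subalgebra, closure under multiplication, addition, scaling, and involution is immediate from $f$ being a $*$-homomorphism. The delicate point is norm-closedness, and my plan is to factor through the kernel: since $f$ is a bounded $*$-homomorphism, $\ker f$ is a closed two-sided $*$-ideal, and $\mathscr{A}/\ker f$ becomes a $C^{*}$-algebra in the usual way. The induced map $\bar f \colon \mathscr{A}/\ker f \to \mathscr{B}$ is an injective unital $*$-homomorphism, and the ``moreover'' part (proved next) makes $\bar f$ isometric. Hence $f(\mathscr{A}) = \bar f(\mathscr{A}/\ker f)$ is isometrically isomorphic to a complete space and so closed in $\mathscr{B}$.

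For the ``moreover'' part, assume $f$ is injective. By the $C^{*}$-identity it suffices to prove $\|f(a)\| = \|a\|$ for positive $a$, in which case the norm is the spectral radius, so I need $\sigma(a) = \sigma(f(a))$. I already have $\sigma(f(a)) \subseteq \sigma(a)$; for the converse, if some $\lambda \in \sigma(a)$ were missing from $\sigma(f(a))$, Urysohn's lemma on $\sigma(a)$ produces a nonzero continuous function $g$ on $\sigma(a)$ vanishing on $\sigma(f(a))$. Applying continuous functional calculus inside the commutative $C^{*}$-subalgebra generated by $a$ and $1$ (identified via Gel'fand, Theorem~\ref{thm:gelfand}, with $C(\sigma(a))$), one gets $g(a) \neq 0$ while $f(g(a)) = g(f(a)) = 0$, contradicting injectivity. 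Thus the spectra coincide and $f$ is isometric.

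Finally, for the order equivalence: $a \geq 0 \Rightarrow f(a) \geq 0$ is just positivity of $f$. Conversely, $f(a) \geq 0$ makes $f(a)$ self-adjoint, so $f(a - a^{*}) = 0$ and injectivity gives $a = a^{*}$. For self-adjoint $a$, Lemma~\ref{lem:order-via-norm} says $a \geq 0$ iff $\|\,\|a\|\cdot 1 - a\,\| \leq \|a\|$, and since $f$ is isometric with $f(\|a\|\cdot 1 - a) = \|f(a)\|\cdot 1 - f(a)$, this characterization is preserved in both directions. The main obstacle, and essentially the only nontrivial step, is the spectral/functional-calculus argument showing injective $*$-homomorphisms are isometric; everything else is formal consequence.
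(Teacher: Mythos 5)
Your proof is correct and consistent with the paper's approach: the paper simply cites Conway's Theorem~VIII.4.8 for contractivity, norm-closedness of the image, and the isometry of injective unital $*$-homomorphisms, and handles the order claim via Lemma~\ref{lem:order-via-norm} exactly as you do, so your text merely supplies the standard spectral-radius and functional-calculus proof of the cited textbook result. The only point worth flagging is that your forward reference to the ``moreover'' part when proving norm-closedness is not circular, since that argument applies to the induced injective unital $*$-homomorphism on $\mathscr{A}/\ker f$ and nowhere uses closedness of the image.
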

\begin{proof}
Use Theorem VIII/4.8 of~\cite{conway1990}
and Lem.~\ref{lem:order-via-norm}.
\end{proof}

If we apply the proposition above
to the inclusion of a $C^*$-subalgebra,
then we get the following
desirable result.
\begin{cor}
\label{cor:order-norm-permanence}
Let~$\mathscr{A}$ be a unital $C^*$-algebra,
and let $a$ be an element of a unital $C^*$-subalgebra~$\mathscr{B}$
of~$\mathscr{A}$.

Then~$\|a\|_{\mathscr{A}} = \|a\|_{\mathscr{B}}$,
and $a\in\mathscr{A}_+$ iff $a\in \mathscr{B}_+$.
\end{cor}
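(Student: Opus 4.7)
The statement is precisely tailored to be read off from the preceding proposition applied to the inclusion map, and indeed the paragraph introducing the corollary flags this explicitly (``If we apply the proposition above to the inclusion of a $C^*$-subalgebra\ldots''). So my plan is simply to instantiate the proposition at the inclusion $j\colon \mathscr{B}\to\mathscr{A}$, $j(a)=a$.

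First, I would check the hypotheses of the proposition. Since $\mathscr{B}$ is a unital $C^*$-subalgebra of $\mathscr{A}$, its unit coincides with the unit of $\mathscr{A}$ (so $j(1_\mathscr{B})=1_\mathscr{A}$, making $j$ unital), and $j$ is a $*$-homomorphism because the multiplication and involution on $\mathscr{B}$ are inherited from $\mathscr{A}$. Injectivity of $j$ is immediate from the definition of the inclusion. Both $\mathscr{B}$ and $\mathscr{A}$ are unital $C^*$-algebras, so the proposition applies.

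The ``moreover'' clause of the proposition then yields $\|j(a)\|_{\mathscr{A}}=\|a\|_{\mathscr{B}}$ for every $a\in\mathscr{B}$, which is precisely $\|a\|_{\mathscr{A}}=\|a\|_{\mathscr{B}}$; and it yields $j(a)\geq 0$ in $\mathscr{A}$ iff $a\geq 0$ in $\mathscr{B}$, which is precisely $a\in\mathscr{A}_+$ iff $a\in\mathscr{B}_+$. There is nothing else to do; there is no real obstacle here, as the corollary is essentially a repackaging of the preceding proposition. The only point I would flag (briefly) is the convention that ``unital $C^*$-subalgebra'' is taken to mean a subalgebra sharing the same unit as $\mathscr{A}$, so that the inclusion is unital in the sense required by the proposition.
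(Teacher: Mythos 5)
Your proposal is correct and is exactly the paper's intended argument: the corollary is stated without a separate proof precisely because the preceding sentence instructs the reader to apply the proposition to the inclusion of the subalgebra, which is what you do. Your brief note that the inclusion is unital and injective is the only checking required, and it matches the paper's implicit reasoning.
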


The order also determines the norm:
\begin{cor}
Let~$\mathscr{A}$ be a unital $C^*$-algebra.
Then 
\begin{equation}
\label{eq:norm-by-order}
\|a\| \ =\  \min\{\,\lambda\in [0,\infty)\colon\,
-\lambda\leq a\leq \lambda \,\}
\end{equation}
for any self-adjoint element~$a$ of~$\mathscr{A}$.
\end{cor}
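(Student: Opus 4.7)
The plan is to reduce to the commutative case via the Gel'fand--Neumark theorem. Given a self-adjoint $a\in\mathscr{A}$, let $\mathscr{B}$ be the smallest unital $C^*$-subalgebra of $\mathscr{A}$ containing $a$. Since $a=a^*$, the algebra $\mathscr{B}$ is commutative. By Corollary~\ref{cor:order-norm-permanence}, the norm on $\mathscr{B}$ agrees with that on $\mathscr{A}$, and positivity in $\mathscr{B}$ is the same as positivity in $\mathscr{A}$; so both $\|a\|$ and the set $\{\lambda\in[0,\infty)\colon -\lambda\cdot 1\leq a\leq \lambda\cdot 1\}$ may be computed inside $\mathscr{B}$.

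Next I would apply Theorem~\ref{thm:gelfand} to obtain a unital $*$-isomorphism $\mathscr{B}\cong C(X)$ for some compact Hausdorff space $X$, carrying $a$ to some $\hat a\in C(X)$. The preceding proposition (on unital $*$-homomorphisms) tells us this isomorphism preserves norm and order, and since $a$ is self-adjoint $\hat a$ takes values in $\mathbb R$. Thus the identity to be proven reduces to the following straightforward statement in $C(X)$: for a real-valued $\hat a$,
\begin{equation*}
\|\hat a\|_\infty \;=\; \min\{\,\lambda\in[0,\infty)\colon -\lambda \leq \hat a(x)\leq \lambda \text{ for all } x\in X\,\}.
\end{equation*}

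The right-hand side is immediate from the definition of the supremum norm: the condition $-\lambda\leq\hat a\leq\lambda$ pointwise is equivalent to $|\hat a(x)|\leq \lambda$ for every $x\in X$, equivalently $\|\hat a\|_\infty\leq\lambda$, so the set in question is exactly $[\|\hat a\|_\infty,\infty)$ and its minimum is $\|\hat a\|_\infty=\|a\|$. There is really no obstacle here; the only thing to check carefully is that passing to $\mathscr{B}$ and then to $C(X)$ legitimately preserves both the norm and the order so that neither side of the identity changes under these reductions, which is precisely the content of Corollary~\ref{cor:order-norm-permanence} and the proposition on unital $*$-homomorphisms invoked just above.
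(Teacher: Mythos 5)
Your proof is correct and follows essentially the same route as the paper: pass to the commutative unital $C^*$-subalgebra generated by $a$, invoke Corollary~\ref{cor:order-norm-permanence} to see that neither the norm nor the order changes, and then use the Gel'fand--Neumark theorem to reduce to the evident statement about the supremum norm on $C(X)$. Your write-up is somewhat more explicit about why each reduction is legitimate, but there is no substantive difference.
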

\begin{proof}
Note that if~$\mathscr{A}=C(X)$ for some compact Hausdorff space,
then~\eqref{eq:norm-by-order} is evidently correct,
because the norm on~$C(X)$ is the supnorm.
Thus, \eqref{eq:norm-by-order}
is also correct if~$\mathscr{A}$ is commutative,
since in that case~$\mathscr{A}$
is $*$-isomorphic
to some~$C(X)$ by Theorem~\ref{thm:gelfand}.

In general, however, $\mathscr{A}$
need not be commutative,
but the $C^*$-subalgebra, $C^*(a)$,
generated by~$a$ is commutative.
Thus, since the order and the norm on~$C^*(a)$
agree with the order and norm on~$\mathscr{A}$
by Corollary~\ref{cor:order-norm-permanence},
\eqref{eq:norm-by-order}
holds on~$\mathscr{A}$
(because it holds on~$C^*(a)$).
\end{proof}

\begin{exa}
\label{exa:commutant-cstar}
Let~$\mathscr{A}$
be a $C^*$-algebra,
and let~$\mathscr{S}$ be a subset of~$\mathscr{A}$.
Then
$\mathscr{S}^\square = \{\,a\in\mathscr{A}\colon\, \forall
s\in \mathscr{S}\ [\, as=sa\,]\,\}$,
the \emph{commutant of~$\mathscr{S}$},
is a $C^*$-subalgebra of~$\mathscr{A}$
provided that $s^*\in\mathscr{S}$
for all~$s\in\mathscr{S}$.
\end{exa}
\begin{cor}
\label{cor:commutes-sqrt}
If an element, $a$,
of a $C^*$-algebra
commutes with~$b\geq 0$,
then~$a$ commutes with~$\sqrt{b}$.
\end{cor}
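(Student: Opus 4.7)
The plan is to realise $\sqrt{b}$ as a norm limit of polynomials in $b$, so that commutativity with $a$ transfers from $b$ to $\sqrt{b}$ by norm continuity of multiplication.

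First I would pass (if necessary) to the unitisation of $\mathscr{A}$ and apply the Gel'fand--Neumark theorem (Theorem~\ref{thm:gelfand}) to the commutative unital $C^*$-subalgebra $C^*(1,b)$ generated by $b$. This identifies $C^*(1,b)$ $*$-isomorphically with $C(\sigma(b))$, where $\sigma(b)\subseteq [0,\|b\|]$, and under this identification $\sqrt{b}$ corresponds to the continuous function $t\mapsto \sqrt{t}$. By the Weierstrass approximation theorem this function is the uniform limit on $[0,\|b\|]$ of a sequence of polynomials $p_n$; translating back across the Gel'fand isomorphism gives $p_n(b)\to \sqrt{b}$ in norm.

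Next, since $a$ commutes with $b$ it commutes with every power $b^k$, and hence with each polynomial $p_n(b)$. Because left- and right-multiplication by $a$ are norm-continuous (via $\|ax\|\leq \|a\|\|x\|$ and the analogous bound on the other side), passing to the limit yields
\begin{equation*}
a\sqrt{b} \ =\ \lim_n a\,p_n(b) \ =\ \lim_n p_n(b)\,a \ =\ \sqrt{b}\,a,
\end{equation*}
as desired. (One may present the same argument more categorically by noting that, since $b^*=b$, taking adjoints of $ab=ba$ gives $a^*b=ba^*$, so $b$ lies in the commutant $\{a,a^*\}^\square$, which by Example~\ref{exa:commutant-cstar} is a norm-closed $C^*$-subalgebra, hence closed under the limit $p_n(b)\to \sqrt{b}$; this absorbs the continuity step into the subalgebra structure.)

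The only non-trivial input is the polynomial-approximation step, which rests on the continuous functional calculus for the self-adjoint element $b$; once that is in place via Theorem~\ref{thm:gelfand}, the argument is purely formal.
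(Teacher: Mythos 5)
Your argument is correct and is essentially the proof the paper intends: the corollary is stated immediately after Example~\ref{exa:commutant-cstar} precisely so that one observes $b\in\{a,a^*\}^\square$ (using $b=b^*$ and $ab=ba$), notes that this commutant is a norm-closed $C^*$-subalgebra, and concludes that it contains $\sqrt{b}$ as a norm limit of polynomials in $b$ — which is exactly the content of your parenthetical remark. The explicit Gel'fand/Weierstrass approximation you spell out is the standard justification of that last step and introduces nothing beyond what the paper takes for granted.
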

\begin{trm}
Let~$\mathscr{A}$
be a $C^*$-algebra
(of operators on a Hilbert space~$\mathscr{H}$)
and let~$N\in\N$.
By~$M_N(\mathscr{A})$
we denote the set of $N\times N$-matrices
over~$\mathscr{A}$
which is itself a $C^*$-algebra
(of operators on the 
Hilbert space~$\mathscr{H}^{\oplus N}$).

Let~$\mathscr{A}$ and~$\mathscr{B}$
be $C^*$-algebras.
Let $f\colon\mathscr{A}\to\mathscr{B}$
be a linear map.
We say that~$f$ is~\textbf{$N$-positive}
if for every positive $N\times N$-matrix
$(A_{ij})_{ij}$
over~$\mathscr{A}$ 
the $N\times N$-matrix
$(f(A_{ij}))_{ij}$ over~$\mathscr{B}$ 
is positive in~$M_N(\mathscr{B})$.
$f$ is \textbf{completely positive}
if~$f$ is $N$-positive for all~$N\in\N$.%
~\cite{stinespring1955}
\end{trm}
\begin{lem}
\label{lem:proj}
Let~$a$ be an element and $p$  a projection
in a unital $C^*$-algebra~$\mathscr{A}$.
If~$a^*a\leq p$, then $ap=a$.
\end{lem}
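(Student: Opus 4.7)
The plan is to show the equivalent statement $a(1-p) = 0$ by bounding its norm. Since norms of positive elements (and more generally $\|x\|^2 = \|x^*x\|$ via the $C^*$-identity) are the natural tool here, I would start by writing $\|a(1-p)\|^2 = \|(1-p)a^*a(1-p)\|$.

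Next, I would use the hypothesis $a^*a \leq p$ together with the fact that conjugation by any element preserves the order, so sandwiching by $(1-p)$ on both sides yields $(1-p)a^*a(1-p) \leq (1-p)p(1-p)$. Since $p$ is a projection, $p(1-p) = p - p^2 = 0$, and therefore $(1-p)a^*a(1-p) \leq 0$. On the other hand, $(1-p)a^*a(1-p) = (a(1-p))^*(a(1-p)) \geq 0$, so this element must be exactly $0$.

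Combining with the first step, $\|a(1-p)\|^2 = 0$, hence $a(1-p) = 0$, which rearranges to $ap = a$. The only step that needs a brief justification is that conjugation by $(1-p)$ preserves the order $a^*a \leq p$; this is a standard fact, since $b \leq c$ implies $x^*bx \leq x^*cx$ for any $x$ in a $C^*$-algebra (apply it to $b = p - a^*a \geq 0$ and $x = 1-p$). There is no real obstacle — the argument is a short and direct application of the $C^*$-identity and elementary positivity.
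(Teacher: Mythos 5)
Your proof is correct and is essentially the same argument the paper uses: the paper derives the lemma as the special case $e_1=1$, $e_2=1-p$ of Lemma~\ref{lem:connected}, whose proof of the implication $(1\Rightarrow 3)$ is exactly your computation --- the $C^*$-identity $\|a(1-p)\|^2=\|(1-p)a^*a(1-p)\|$ followed by sandwiching $a^*a\leq p$ between $(1-p)$ and using $p(1-p)=0$.
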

\begin{proof}
Follows from Lemma~\ref{lem:connected}.
\end{proof}
\begin{cor}
\label{cor:proj}
Let~$\mathscr{A}$
be a unital $C^*$-algebra.
For every projection~$p$ in~$\mathscr{A}$
and $a\in \mathscr{A}$ with~$0\leq a\leq p$,
we have $ap=pa=a$.
\end{cor}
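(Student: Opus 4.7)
The plan is to derive the corollary directly from Lemma~\ref{lem:proj}, using the square root of $a$ as the element to which the lemma is applied. Since $0 \leq a \leq 1$ (noting $p \leq 1$), the positive square root $\sqrt{a}$ exists in $\mathscr{A}$, and by definition $(\sqrt{a})^*\sqrt{a} = \sqrt{a}^{\,2} = a$. The hypothesis $a \leq p$ then reads $(\sqrt{a})^*\sqrt{a} \leq p$, so Lemma~\ref{lem:proj} applies with $\sqrt{a}$ in place of $a$, giving $\sqrt{a}\,p = \sqrt{a}$.

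Multiplying this identity on the left by $\sqrt{a}$ yields
\begin{equation*}
    ap \ =\ \sqrt{a}\,(\sqrt{a}\,p) \ =\ \sqrt{a}\sqrt{a} \ =\ a.
\end{equation*}
For the other equality, I would take adjoints: since $a$ is self-adjoint (because $0 \leq a$) and $p$ is a projection, $pa = p^* a^* = (ap)^* = a^* = a$.

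An alternative route, should one prefer to avoid invoking the square root, is to note that $a$ and $1-a$ lie in the commutative $C^*$-subalgebra generated by $a$, are both positive, and commute, so $a - a^2 = a(1-a) \geq 0$; hence $a^*a = a^2 \leq a \leq p$, and Lemma~\ref{lem:proj} applied to $a$ itself gives $ap = a$, with $pa = a$ following by taking adjoints as above. Either way, there is no real obstacle here: the corollary is essentially a one-line reduction to Lemma~\ref{lem:proj}, and the only point requiring any thought is selecting an element whose $(-)^*(-)$ is bounded above by $p$, which is immediate once one remembers that $\sqrt{a}$ (or $a$ itself, via $a^2 \leq a$) is available.
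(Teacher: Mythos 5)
Your proof is correct and matches the paper's intended route: the paper states this corollary immediately after Lemma~\ref{lem:proj} with no written proof, clearly meaning it to follow by applying that lemma to a suitable element, which is exactly what you do (via $\sqrt{a}$, or equivalently via $a$ itself using $a^2\leq a\leq p$). Both of your reductions are valid and the adjoint step for $pa=a$ is fine.
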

\begin{cor}\label{lem:projsumunderone}
Let~$p,q$ be projections with $p+q\leq 1$
in a unital $C^*$-algebra.
Then  $pq = qp = 0$.
\end{cor}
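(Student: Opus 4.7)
The plan is to reduce the statement directly to Lemma~\ref{lem:connected} by taking $a = 1$. Since $\mathscr{A}$ is unital, $1^*1 = 1 \leq 1$, so the hypothesis of Lemma~\ref{lem:connected} is satisfied. With $a = 1$, $e_1 = p$, $e_2 = q$, condition~(1) of the lemma, namely $a^* e_1 a \leq 1 - e_2$, becomes simply $p \leq 1 - q$, which is precisely the assumption $p + q \leq 1$. The equivalent conditions~(3) and~(4) then read $e_1 a e_2 = pq = 0$ and $e_2 a^* e_1 = qp = 0$, which is exactly what we need.

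If one prefers a more elementary route that avoids invoking Lemma~\ref{lem:connected}, one can instead argue as follows. The assumption $p + q \leq 1$ rearranges to $0 \leq q \leq 1 - p$, and since $p$ is a projection so is $1 - p$. Applying Corollary~\ref{cor:proj} with the projection $1-p$ and the element $q$ yields $q(1-p) = (1-p)q = q$. Expanding the left equation gives $q - qp = q$, hence $qp = 0$; taking adjoints and using that $p$ and $q$ are self-adjoint gives $pq = (qp)^* = 0$.

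There is no real obstacle here: the result is an immediate consequence of material already established. The only thing to verify is that the hypotheses of the invoked lemma or corollary are genuinely met, which is essentially a one-line rearrangement of $p + q \leq 1$ into $q \leq 1 - p$. I would present the proof via Lemma~\ref{lem:connected} since it produces both $pq = 0$ and $qp = 0$ in a single symmetric step, matching the style already used in the appendix (compare the proof of Lemma~\ref{lem:proj}, which is handled the same way).
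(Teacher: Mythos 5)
Both of your arguments are correct: the paper gives no explicit proof of this corollary, but its placement immediately after Corollary~\ref{cor:proj} indicates the intended derivation is your second, elementary route (rewrite $p+q\leq 1$ as $0\leq q\leq 1-p$ and apply Corollary~\ref{cor:proj} to the projection $1-p$). Your preferred route via Lemma~\ref{lem:connected} with $a=1$ is equally valid --- the lemma is explicitly stated for unital $C^*$-algebras, and this mirrors how the paper derives Lemma~\ref{lem:proj} --- so either presentation is fine.
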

\begin{lem}
\label{lem:projection-order}
For an element~$p$
of a unital $C^*$-algebra~$\mathscr{A}$,
the following are equivalent.
\begin{enumerate}
\item
\label{lem:projection-order-1}
$p$ is a projection.
\item
\label{lem:projection-order-2}
$a\leq p$ and~$a\leq 1-p$
entails~$a=0$ for all $a\in\mathscr{A}_+$.
\end{enumerate}
\end{lem}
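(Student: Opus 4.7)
The plan is to prove the two implications separately. The forward direction, (1) $\Longrightarrow$ (2), should fall immediately out of Corollary~\ref{cor:proj}, while the reverse, (2) $\Longrightarrow$ (1), will require constructing an explicit positive witness lying beneath both $p$ and $1-p$.

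For (1) $\Longrightarrow$ (2), if $p$ is a projection then so is $1-p$. Given $a\in\mathscr{A}_+$ with $a\leq p$ and $a\leq 1-p$, Corollary~\ref{cor:proj} applied to $a\leq p$ yields $ap=pa=a$, and applied to $a\leq 1-p$ yields $a(1-p)=(1-p)a=a$. Combining these, $a=a(1-p)=a-ap=a-a=0$.

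For (2) $\Longrightarrow$ (1), the canonical witness is $a:=p(1-p)$. Working under the implicit hypothesis that $p=p^*$ and $0\leq p\leq 1$ (without which the inequalities in~(2) admit no nonzero candidates and the condition degenerates vacuously), we have $a=\sqrt{p}\,(1-p)\,\sqrt{p}\geq 0$ because $1-p\geq 0$ and conjugation preserves positivity. The factorisations $p-a=p^2\geq 0$ and $(1-p)-a=(1-p)^2\geq 0$ certify $a\leq p$ and $a\leq 1-p$, so hypothesis~(2) forces $a=0$, whence $p^2=p$; combined with self-adjointness, $p^*p=p$.

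The hard part is justifying the implicit self-adjointness and effect conditions on $p$. If one prefers a self-contained treatment for general self-adjoint $p$, the cleaner route is to reduce to the commutative $C^*$-subalgebra $C^*(1,p)\cong C(\sigma(p))$ via Theorem~\ref{thm:gelfand}: any spectral value $\lambda\in\sigma(p)\setminus\{0,1\}$ can, by continuous functional calculus, be turned into a nonzero positive element bounded above by both $p$ and $1-p$, contradicting~(2) and forcing $\sigma(p)\subseteq\{0,1\}$, i.e.\ $p$ is a projection.
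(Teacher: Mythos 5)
Your proof is correct, and the forward direction coincides with the paper's (the paper writes $a=ap+a(1-p)=2a$ where you write $a=a(1-p)=a-ap$; same use of Corollary~\ref{cor:proj}). For the reverse direction, however, you take a genuinely different and more elementary route. The paper passes to the commutative $C^*$-subalgebra generated by $p$, invokes Gel'fand--Neumark (Theorem~\ref{thm:gelfand}) to realise it as $C(X)$, and uses the pointwise witness $x\mapsto\min\{p(x),1-p(x)\}$; your explicit algebraic witness $a=p(1-p)$ with the factorisations $p-a=p^2$ and $(1-p)-a=(1-p)^2$ achieves the same conclusion with no representation theorem at all, which is a real simplification (your fallback via $C^*(1,p)\cong C(\sigma(p))$ is essentially the paper's argument, so you needn't keep it). You are also right to flag the implicit hypothesis $0\leq p\leq 1$: the lemma as literally stated fails for, say, $p=2$ or $p=i$ in $\mathbb{C}$, where condition (2) holds vacuously; the paper's own proof silently uses the same hypothesis, since $\min\{p(x),1-p(x)\}$ is positive only when $0\leq p(x)\leq 1$. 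In the paper the lemma is only ever applied to elements of $[0,1]_\mathscr{A}$ (via Corollary~\ref{cor:isometry-preserves-projections}), so nothing downstream breaks, but your version makes the needed hypothesis explicit rather than leaving it buried in the choice of witness.
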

\begin{proof}
\emph{(\ref{lem:projection-order-1}$\Longrightarrow$%
\ref{lem:projection-order-2})}\ 
Let~$a\in\mathscr{A}_+$ with $a\leq p$ and $a\leq 1-p$ be given.
Since~$ap=a$ and $a(1-p) =a$
by Corollary~\ref{cor:proj},
we get~$a=ap+a(1-p)=2a$,
and so~$a=0$.

\emph{(\ref{lem:projection-order-2}$\Longrightarrow$%
\ref{lem:projection-order-1})}\ 
We may assume that~$\mathscr{A}$
is commutative (by considering
the $C^*$-subalgebra generated by~$\{a\}$ instead),
and so~$\mathscr{A}\cong C(X)$
for some compact Hausdorff space
by Theorem~\ref{thm:gelfand}.

Then $a\in C(X)$ given by $a(x)=\min\{p(x),1-p(x)\}$
for all~$x\in X$
is positive and below both~$p$ and~$1-p$.
Thus~$a=0$ by assumption.
Then, for all~$x\in X$,
either~$p(x)=0$ or~$1-p(x)=0$.
Thus~$p$ takes only the values~$0$ and~$1$,
and is therefore easily seen to be a projection.
\end{proof}
\begin{cor}
\label{cor:isometry-preserves-projections}
Let~$f\colon \mathscr{A}\to\mathscr{B}$
be an invertible
positive unital linear map between unital $C^*$-algebras,
such that~$f^{-1}$ is positive.
Then~$f$ preserves projections.
\end{cor}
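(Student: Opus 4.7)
The plan is to apply Lemma~\ref{lem:projection-order} to $f(p)$, transporting the defining order-theoretic property of the projection $p$ across $f$ using the hypothesis that both $f$ and $f^{-1}$ are positive.

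First I would record a few preliminary observations about $f$ and $f^{-1}$. Any positive linear map between $C^*$-algebras preserves self-adjointness (a self-adjoint element is a difference of positives) and is order-preserving on self-adjoint elements (since $a\leq b$ means $b-a\in\pos{\mathscr{A}}$, which $f$ sends into $\pos{\mathscr{B}}$). Applied to both $f$ and $f^{-1}$, this tells me that $f$ is in fact an order-isomorphism between the self-adjoint parts of $\mathscr{A}$ and~$\mathscr{B}$. Moreover, since $f$ is unital and invertible, $f^{-1}(1)=f^{-1}(f(1))=1$, so $f^{-1}$ is unital as well. In particular, $1-f(p)=f(1)-f(p)=f(1-p)$.

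Next, let $p\in\mathscr{A}$ be a projection; I want to show $f(p)$ is a projection. By the observations above, $f(p)$ is self-adjoint, so it makes sense to compare elements of $\pos{\mathscr{B}}$ with both $f(p)$ and $1-f(p)$. By Lemma~\ref{lem:projection-order} it suffices to show that any $b\in\pos{\mathscr{B}}$ satisfying $b\leq f(p)$ and $b\leq 1-f(p)$ must be zero. Given such a $b$, I apply $f^{-1}$: since $f^{-1}$ is positive it is order-preserving, so $f^{-1}(b)\leq p$ and $f^{-1}(b)\leq f^{-1}(1-f(p))=1-p$, and $f^{-1}(b)\in\pos{\mathscr{A}}$. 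Applying Lemma~\ref{lem:projection-order} to the projection $p$ now yields $f^{-1}(b)=0$, hence $b=f(0)=0$, as required.

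There is no real obstacle here once Lemma~\ref{lem:projection-order} is in hand; the only point that might warrant care is the bookkeeping that $f^{-1}$ is a unital, order-preserving linear map, which follows immediately from $f(1)=1$, positivity of $f^{-1}$, and linearity. In particular, nowhere do we need multiplicativity of $f$ or $f^{-1}$, which is the content of the statement: the order structure alone, preserved in both directions, already forces $f$ to send projections to projections.
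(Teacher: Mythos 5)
Your proof is correct and is exactly the argument the paper intends: the corollary is stated immediately after Lemma~\ref{lem:projection-order} with no written proof, precisely because it follows by transporting the order-theoretic characterization of projections across $f$ and $f^{-1}$ as you do. The bookkeeping steps you supply ($f^{-1}$ unital and order-preserving, $f(1-p)=1-f(p)$, $f(p)$ self-adjoint) are the right ones.
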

\section{Von Neumann Algebras}
\begin{trm}
	A \textbf{von Neumann algebra}
	is a unital $C^*$-algebra~$\mathscr{A}$
	such that: \emph{(I)} every bounded 
	directed set of self-adjoint
	elements of~$\mathscr{A}$
	has a supremum in~$\sa{\mathscr{A}}$,
	and \emph{(II)} for every positive $a\in\mathscr{A}$:
	if $\varphi(a)=0$ 
	for every \emph{normal} 
	state~$\varphi$ of~$\mathscr{A}$,
	then~$a=0$. \cite{kadison1956}

	A \textbf{von Neumann subalgebra}
	of a von Neumann algebra~$\mathscr{A}$
	is a $C^*$-subalgebra~$\mathscr{S}$ of~$\mathscr{A}$
	such that for every bounded directed set~$D$
	of~$\mathscr{S}_\mathrm{sa}$
	we have $\bigvee D \in \mathscr{S}$,
	where $\bigvee D$ is the supremum of~$D$ in~$\mathscr{A}_\mathrm{sa}$.
\end{trm}
\begin{trm}
	Let~$\mathscr{A}$
	be a $C^*$-algebra.
	Given a net~$(a_i)_i$
	in~$\mathscr{A}$
	and
	$b\in\mathscr{A}$,---
	\begin{enumerate}
		\item
			$(a_i)_i$
			converges \textbf{ultraweakly}
			to~$b$
			if for every normal state~$\varphi$
			of~$\mathscr{A}$,
			\begin{equation*}
				(\varphi(a_i))_i
				\quad\text{converges to}\quad 
				\varphi(b);
			\end{equation*}

		\item
			and --- provided~$\mathscr{A}$
			is a $C^*$-subalgebra
			of the space of bounded operators
			$\mathscr{B}(\mathscr{H})$
			on a Hilbert space~$\mathscr{H}$ ---
			$(a_i)_i$
			converges \textbf{weakly} to~$b$
			(with respect to~$\mathscr{H}$)
			if for all~$x\in \mathscr{H}$,
			\begin{equation*}
				(\left<a_ix,x\right>)_i
				\quad\text{converges to}\quad
				\left<bx,x\right>.
			\end{equation*}
	\end{enumerate}
\end{trm}
\begin{exa}
Let~$\mathscr{H}$
be a Hilbert space.
Then~$\mathscr{B}(\mathscr{H})$
is a von Neumann algebra.
\end{exa}
\begin{thm}[Kadison]
\label{thm:neumann-weakly-closed}
For a $C^*$-algebra of bounded operators 
on a Hilbert space,
the following are equivalent.
\begin{enumerate}
\item
$\mathscr{A}$ is a von Neumann subalgebra of~$\mathscr{B}(\mathscr{H})$;
\item
$\mathscr{A}$ is weakly closed in~$\mathscr{B}(\mathscr{H})$.
\end{enumerate}
\end{thm}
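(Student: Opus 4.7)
My plan is to prove both directions by exploiting the key fact that in $\mathscr{B}(\mathscr{H})$ any bounded increasing net of self-adjoint operators converges strongly (and hence weakly) to its supremum, and so by the order structure the two notions of "closure" (order-theoretic and topological) can be compared.

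For $(2) \Rightarrow (1)$: assume $\mathscr{A}$ is weakly closed in $\mathscr{B}(\mathscr{H})$. For axiom (I), let $D \subseteq \sa{\mathscr{A}}$ be bounded and directed. Since $\mathscr{B}(\mathscr{H})$ is a von Neumann algebra, $D$ admits a supremum $s$ in $\sa{\mathscr{B}(\mathscr{H})}$, and the monotone net $(d)_{d \in D}$ converges weakly to $s$. By weak closure, $s \in \mathscr{A}$, and since the order on $\mathscr{A}$ is inherited from $\mathscr{B}(\mathscr{H})$, $s$ remains the supremum of $D$ in $\sa{\mathscr{A}}$. For axiom (II), each vector state $\omega_x(a) = \langle a x, x\rangle$ on $\mathscr{B}(\mathscr{H})$ restricts to a state on $\mathscr{A}$ which is normal (by axiom (I) just established, the suprema in $\sa{\mathscr{A}}$ and $\sa{\mathscr{B}(\mathscr{H})}$ agree, so preservation of suprema is inherited from the normality of $\omega_x$ on $\mathscr{B}(\mathscr{H})$). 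Since vector states separate positive elements of $\mathscr{B}(\mathscr{H})$, their restrictions separate positive elements of $\mathscr{A}$.

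For $(1) \Rightarrow (2)$: this is the deeper direction. My plan is to invoke Sakai's theorem \cite{sakai1971}, which equips an abstract von Neumann algebra $\mathscr{A}$ with a unique Banach space predual $\mathscr{A}_*$ consisting of the normal linear functionals, so that $\mathscr{A}$ is isometrically a dual Banach space. The weak-$*$ topology on $\mathscr{A}$ induced by $\mathscr{A}_*$ agrees on bounded subsets with the weak operator topology inherited from the inclusion $\mathscr{A} \hookrightarrow \mathscr{B}(\mathscr{H})$, because vector states of $\mathscr{B}(\mathscr{H})$ restrict to normal states of $\mathscr{A}$ and these span $\mathscr{A}_*$. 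By the Banach--Alaoglu theorem, the unit ball of $\mathscr{A}$ is weak-$*$ compact, hence weakly compact, hence weakly closed in $\mathscr{B}(\mathscr{H})$. An application of the Krein--Smulian theorem then upgrades weak closure of the unit ball to weak closure of the full subspace $\mathscr{A}$.

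The main obstacle is the $(1) \Rightarrow (2)$ direction: passing from the intrinsic order-completeness and state-separation axioms to a concrete topological closure statement really requires the predual machinery, so I would lean on \cite{sakai1971} and \cite{kadison1956} for that step rather than attempt a self-contained construction of $\mathscr{A}_*$, which would duplicate a substantial chunk of von Neumann algebra theory.
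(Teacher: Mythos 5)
The paper does not actually prove this theorem: it is stated as a citation, ``follows from Lemma~1 of~\cite{kadison1956}.'' Your proposal reconstructs the argument instead, and the reconstruction is sound. The direction $(2)\Rightarrow(1)$ is exactly the standard one: a bounded increasing net of self-adjoint operators converges strongly, hence weakly, to its supremum in $\mathscr{B}(\mathscr{H})$, so a weakly closed $C^*$-subalgebra contains that supremum, which is then also the supremum computed in the subalgebra because the order is inherited (cf.\ Corollary~\ref{cor:order-norm-permanence}); note that for the paper's definition of ``von Neumann subalgebra'' this closure property is all that is required, so your verification of axiom~(II) is harmless but not needed. The direction $(1)\Rightarrow(2)$ via Sakai's predual, Banach--Alaoglu, and Krein--\v{S}mulian is a legitimate and essentially standard route (close in spirit to Kadison's own), and you are right that the existence of the predual is the genuinely hard input that must be imported from \cite{sakai1971,kadison1956}.

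Two points of precision. First, you do not need (and have not justified) the claim that restrictions of vector states \emph{span} $\mathscr{A}_*$; for the implication ``weak-$*$ compact $\Rightarrow$ weakly compact'' it suffices that each vector functional restricts to a \emph{normal}, hence weak-$*$ continuous, functional on $\mathscr{A}$ --- and that is exactly what the closure-under-suprema hypothesis gives you. (A posteriori the two topologies do agree on the unit ball, since a continuous bijection from a compact space to a Hausdorff space is a homeomorphism.) Second, Krein--\v{S}mulian lives in the weak-$*$ ($\sigma$-weak) topology of $\mathscr{B}(\mathscr{H})$, not the weak operator topology; it yields that $\mathscr{A}$ is $\sigma$-weakly closed, and you then need the standard fact that a convex set has the same closure in the weak and $\sigma$-weak operator topologies (or that the two topologies coincide on bounded sets, fed back through Krein--\v{S}mulian) to conclude weak closedness in the sense of the statement. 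Both are routine repairs, so I would count the outline as correct, just with these two steps made explicit.
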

\begin{proof}
This follows from Lemma~1 of~\cite{kadison1956}.
\end{proof}
\begin{thm}[Kadison]
\label{thm:KGNS}
Any von Neumann algebra
is $*$-isomorphic
to a von Neumann subalgebra of~$\mathscr{B}(\mathscr{H})$
for some Hilbert space~$\mathscr{H}$.

Moreover, $\mathscr{H}$ can be chosen in such a way
that the ultraweak topology on~$\mathscr{A}$
coincides with weak topology on~$\mathscr{A}$
induced by~$\mathscr{B}(\mathscr{H})$
\end{thm}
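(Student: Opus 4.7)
The plan is to construct $\mathscr{H}$ as a direct sum of GNS Hilbert spaces arising from the normal states of $\mathscr{A}$. Axiom (II) in the definition of a von Neumann algebra guarantees that the normal states form a separating family, which will supply both injectivity of the representation and the required matching of topologies. The bare Gel'fand--Neumark--Segal theorem (Theorem~\ref{thm:gns}) is not enough on its own, because it works with arbitrary states and offers no handle on normality.

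First, for each normal state $\varphi$ on $\mathscr{A}$ I would run the GNS construction (as used in the proof of Theorem~\ref{thm:gns}) to obtain a cyclic $*$-representation $\pi_\varphi \colon \mathscr{A} \to \mathscr{B}(\mathscr{H}_\varphi)$ together with a unit vector $\xi_\varphi \in \mathscr{H}_\varphi$ satisfying $\varphi(a) = \langle \pi_\varphi(a)\xi_\varphi, \xi_\varphi\rangle$. Because $\varphi$ is normal, a direct supremum-preservation argument on bounded increasing nets of self-adjoint elements shows that $\pi_\varphi$ itself is normal. I would then set $\mathscr{H} = \bigoplus_\varphi \mathscr{H}_\varphi$ and $\pi = \bigoplus_\varphi \pi_\varphi$, taken over all normal states; this gives a normal unital $*$-homomorphism $\pi\colon \mathscr{A} \to \mathscr{B}(\mathscr{H})$. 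Injectivity is immediate from axiom (II): if $\pi(a^*a) = 0$ then $\varphi(a^*a) = 0$ for every normal $\varphi$, hence $a^*a = 0$ and $a = 0$.

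The main work is showing that $\pi(\mathscr{A})$ is weakly closed in $\mathscr{B}(\mathscr{H})$, since Theorem~\ref{thm:neumann-weakly-closed} then upgrades it to a von Neumann subalgebra. My approach would be to verify that $\pi$ sends bounded ultraweakly convergent nets to bounded weakly convergent nets in $\mathscr{B}(\mathscr{H})$: each vector state $T \mapsto \langle T\xi, \xi\rangle$ on $\mathscr{B}(\mathscr{H})$ pulls back along $\pi$ to a normal state on $\mathscr{A}$ (because $\pi$ is normal), so the weak topology on $\pi(\mathscr{A})$ is coarser than the image of the ultraweak topology on $\mathscr{A}$. For closure, I would argue that the image of the unit ball $[0,1]_{\mathscr{A}}$ (extended to the whole self-adjoint unit ball) is weakly compact by monotone completeness and hence closed; the full algebra then follows by a Kaplansky-density-style argument together with linearity.

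The hard part will be the second clause, namely the topological identification of the ultraweak and weak topologies on $\pi(\mathscr{A})$. One direction is routine from normality of $\pi$; the reverse requires that \emph{every} normal functional on $\mathscr{A}$ appears as the restriction along $\pi$ of a vector functional on $\mathscr{H}$, which is where it becomes essential that we summed over \emph{all} normal states rather than only a maximal family separating points. Polarization then upgrades normal states to general normal functionals, closing the loop.
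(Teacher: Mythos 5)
Your construction is exactly the one behind the paper's proof: the paper simply cites Theorem~1 of Kadison's 1956 paper, whose proof is the direct sum of GNS representations over all normal states, and the paper's only added remark is precisely your closing point --- that summing over \emph{all} normal states makes every normal state of $\mathscr{A}$ a vector state of the representation, which identifies the two topologies. The GNS step, injectivity via axiom~(II), normality of each $\pi_\varphi$, and the topology comparison are all fine as sketched (though note that the paper's ultraweak topology is defined via normal \emph{states} only, so the polarization step at the end is not needed).

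The gap is in the middle step, and it is twofold. First, you have made the problem harder than the statement requires: a \emph{von Neumann subalgebra} of $\mathscr{B}(\mathscr{H})$ is, by the paper's definition, a monotone-closed $C^*$-subalgebra; weak closedness is not what you need to prove. Monotone closedness of $\pi(\mathscr{A})$ follows directly from axiom~(I) together with two facts already in your setup: $\pi$ is normal as a map into $\mathscr{B}(\mathscr{H})$ (suprema of bounded increasing nets in a direct sum are computed blockwise), and an injective $*$-homomorphism is an isometric order isomorphism onto its image, so a bounded directed subset of $\pi(\mathscr{A})_{\mathrm{sa}}$ pulls back to one in $\mathscr{A}_{\mathrm{sa}}$, whose supremum $\pi$ then carries to the supremum computed in $\mathscr{B}(\mathscr{H})_{\mathrm{sa}}$. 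Second, the argument you do offer for weak closedness fails as stated: ``the image of the self-adjoint unit ball is weakly compact by monotone completeness'' is not a valid inference --- weak compactness already entails weak closedness, so this begs the question, and the passage from monotone closedness to weak closedness is precisely the nontrivial content of Kadison's Lemma~1, i.e.\ of Theorem~\ref{thm:neumann-weakly-closed}, which you are invoking in the wrong direction. If you want weak closedness at all, establish monotone closedness as above and then apply Theorem~\ref{thm:neumann-weakly-closed} to upgrade it, rather than the reverse.
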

\begin{proof}
That~$\mathscr{A}$ is $*$-isomorphic
to a von Neumann algebra of bounded
operators on some Hilbert space~$\mathscr{H}$
follows from Theorem~1 of~\cite{kadison1956}.
That the ultraweak topology on~$\mathscr{A}$
coincides with the weak topology on~$\mathscr{A}$
induced by~$\mathscr{H}$
follows
from the way the Hilbert space~$\mathscr{H}$
is constructed in the first paragraph of the proof of
Theorem~1\cite{kadison1956}
(if we take $(\omega_\alpha)_{\alpha \in \Gamma}$
to be the collection of all normal states):
for every normal state~$\omega$ of~$\mathscr{A}$
there is~$x \in \mathscr{H}$
with $\omega(a)=\left<x,ax\right>$
for all~$a\in \mathscr{A}$.
\end{proof}
\begin{exa}
Let~$X$ be a measure space.
Then the
$C^*$-algebra~$L^\infty(X)$
of bounded measurable complex-valued functions on~$X$
(in which two such functions are identified when they
are equal almost everywhere)
is a commutative von Neumann algebra
and the map $\varrho\colon L^\infty(X)\to\mathscr{B}(L^2(X))$
given by $\varrho(f)(g)=\int fgd\mu$
is an injective normal $*$-homomorphism,
where $L^2(X)$
is the Hilbert space of square integrable complex-valued functions on~$X$
(in which two such functions are identified when they are equal almost
everywhere).
\end{exa}
\begin{thm}[Spectral Theorem]
\label{thm:spectral}
For every self-adjoint bounded operator~$A$ on a Hilbert space~$\mathscr{H}$,
there is a measure space~$X$,
an element~$a$ of~$L^\infty(X)$,
and a unitary $U\colon L^2(X)\to \mathscr{H}$,
such that $U^* A U = \int a \cdot -\, d\mu$.
\end{thm}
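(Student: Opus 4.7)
The plan is to deduce the multiplication-operator form by first reducing to the case where $A$ has a cyclic vector and then invoking the continuous functional calculus coming from Gelfand--Neumark (Theorem~\ref{thm:gelfand}), together with a Riesz representation argument.

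In the cyclic case, assume there is $\xi\in\mathscr{H}$ such that $\{p(A)\xi\colon p\in\C[t]\}$ is dense in~$\mathscr{H}$. Let $\mathscr{B}\subseteq\mathscr{B}(\mathscr{H})$ be the unital commutative $C^*$-subalgebra generated by~$A$. Theorem~\ref{thm:gelfand} supplies a compact Hausdorff space~$X$ (which may be identified with the spectrum $\sigma(A)\subseteq\R$) and a $*$-isomorphism $\Phi\colon C(X)\to\mathscr{B}$ with $\Phi(\mathrm{id}_X)=A$. The functional $f\mapsto\langle\xi,\Phi(f)\xi\rangle$ is positive and linear on $C(X)$, so by the Riesz representation theorem there is a finite regular Borel measure~$\mu$ on~$X$ with $\langle\xi,\Phi(f)\xi\rangle=\int f\,d\mu$. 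Define $V\colon C(X)\to\mathscr{H}$ by $V(f)=\Phi(f)\xi$; since $\Phi$ is a $*$-homomorphism,
\begin{equation*}
\|V(f)\|^2\;=\;\langle\xi,\Phi(\bar f)\Phi(f)\xi\rangle\;=\;\int|f|^2\,d\mu\;=\;\|f\|_{L^2(X,\mu)}^2,
\end{equation*}
so $V$ extends by density to an isometry $U\colon L^2(X,\mu)\to\mathscr{H}$, which cyclicity of~$\xi$ makes surjective, hence unitary. The identity $A\,\Phi(f)\xi=\Phi(\mathrm{id}_X\cdot f)\xi$ then shows that conjugation by~$U$ turns~$A$ into multiplication by $a(x):=x$, and this $a$ lies in $L^\infty(X)$ because $X$ is compact.

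For the general case, Zorn's lemma produces a maximal family $\{\xi_\alpha\}_{\alpha\in I}$ of nonzero vectors of~$\mathscr{H}$ whose closed cyclic subspaces $\mathscr{H}_\alpha:=\overline{\{p(A)\xi_\alpha\colon p\in\C[t]\}}$ are pairwise orthogonal. Since $A$ is self-adjoint, each $\mathscr{H}_\alpha$ is $A$-invariant, and maximality forces $\mathscr{H}=\bigoplus_\alpha\mathscr{H}_\alpha$. Applying the cyclic case to each restriction $A|_{\mathscr{H}_\alpha}$ yields data $(X_\alpha,a_\alpha,U_\alpha)$, and then $X:=\bigsqcup_\alpha X_\alpha$ (with the disjoint-union measure), $a:=\bigsqcup_\alpha a_\alpha$, and $U:=\bigoplus_\alpha U_\alpha\colon L^2(X)\cong\bigoplus_\alpha L^2(X_\alpha)\to\mathscr{H}$ will solve the general problem.

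The main obstacle is the bookkeeping for this direct-sum assembly, since a priori $I$ need not be countable: one has to verify that the disjoint union of the $X_\alpha$ carries a well-defined measure space structure, that the uniform bound $\|a_\alpha\|_\infty\le\|A\|$ gives $a\in L^\infty(X)$, and that the canonical isomorphism $L^2(X)\cong\bigoplus_\alpha L^2(X_\alpha)$ intertwines multiplication by~$a$ with $\bigoplus_\alpha A|_{\mathscr{H}_\alpha}$. Once these routine checks are in place, the cyclic case does the rest of the work.
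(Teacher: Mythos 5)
Your argument is correct and is the standard proof of the multiplication-operator form of the spectral theorem: the paper itself offers no proof of Theorem~\ref{thm:spectral}, deferring entirely to the cited article of Halmos, and your cyclic-vector reduction followed by the Gel'fand--Neumark isomorphism, the Riesz representation theorem, and the isometry $f\mapsto\Phi(f)\xi$ is exactly the argument found in that reference. The only point worth making explicit is the one you already flag: for an uncountable index set the disjoint union carries the obvious ``sum'' measure (a set is measurable iff each intersection with an $X_\alpha$ is), every element of $L^2(X)$ is then supported on countably many pieces, so $L^2(X)\cong\bigoplus_\alpha L^2(X_\alpha)$ and the uniform bound $\|a_\alpha\|_\infty\leq\|A\|$ indeed puts $a$ in $L^\infty(X)$.
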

\begin{proof}
See~\cite{halmos1963}.
\end{proof}
\begin{prop}
\label{prop:supremum-commutes}
Let~$D$ be a directed bounded set of self-adjoint
elements of a von Neumann algebra~$\mathscr{A}$.

Let~$b\in \mathscr{A}$.
If~$b$ commutes with all~$d\in D$,
then~$b$ commutes with~$\bigvee D$.
\end{prop}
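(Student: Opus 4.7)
The plan is to identify $s := \bigvee D$ as the ultraweak limit of the net $D$ (directed by its own order) and then exploit the separate ultraweak continuity of multiplication by the fixed element $b$.

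First I would establish ultraweak convergence $d \to s$. For any normal state $\varphi$ of $\mathscr{A}$, positivity makes $(\varphi(d))_{d\in D}$ a bounded monotone-increasing net of real numbers, and by the defining property of normality applied to $\varphi$ its supremum equals $\varphi(s)$. Hence $\varphi(d) \to \varphi(s)$ for every normal state, which is precisely the definition of ultraweak convergence of $d$ to $s$.

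Next, by Theorem~\ref{thm:KGNS} I may assume without loss of generality that $\mathscr{A}$ is a von Neumann subalgebra of $\mathscr{B}(\mathscr{H})$ in such a way that the ultraweak topology on $\mathscr{A}$ coincides with the weak topology induced by $\mathscr{H}$. Thus $d \to s$ in the weak topology, giving $\langle d x, x\rangle \to \langle s x, x\rangle$ for every $x \in \mathscr{H}$; via the polarization identity this upgrades to $\langle dx, y\rangle \to \langle sx, y\rangle$ for all $x, y \in \mathscr{H}$. From here, for any $x,y \in \mathscr{H}$,
\[
\langle bd\, x, y\rangle \ =\ \langle dx, b^*y\rangle \ \to\ \langle sx, b^*y\rangle \ =\ \langle bs\, x,y\rangle,
\]
so $bd \to bs$ weakly; symmetrically $db \to sb$ weakly. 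Since $bd = db$ for every $d \in D$ by hypothesis, passing to the weak limit yields $bs = sb$.

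The only substantive step is the first one, identifying $s$ as the ultraweak limit of its directed defining family; this is a direct unpacking of the definitions of normal state and ultraweak convergence and presents no real obstacle. The remainder is the standard separate weak continuity of multiplication on $\mathscr{B}(\mathscr{H})$, which is purely formal.
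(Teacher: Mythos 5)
Your proof is correct and follows essentially the same route as the paper's: represent $\mathscr{A}$ on a Hilbert space via Theorem~\ref{thm:KGNS}, view $\bigvee D$ as the limit of the net $(d)_{d\in D}$ in an operator topology, and conclude by the separate weak continuity of left and right multiplication by $b$. The only difference is the technical input for the convergence: the paper cites the strong convergence of bounded monotone nets (Lemma~5.1.4 of Kadison--Ringrose), whereas you derive ultraweak (hence, by the choice of representation, weak) convergence directly from the normality of states --- a slightly more self-contained variant that changes nothing essential.
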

\begin{proof}
We may assume 
(by Theorem~\ref{thm:KGNS})
without loss of generality that~$\mathscr{A}$
is a von Neumann subalgebra
of~$\mathscr{B}(\mathscr{H})$
for some Hilbert space~$\mathscr{H}$.
Since~$(d)_{d\in D}$ converges
strongly to~$\bigvee D$
(see Lemma~5.1.4 of~\cite{kadison1997})
we see that
$(bd)_{d\in D}$
converges weakly to~$b(\bigvee D)$.
Since $bd=db$ for all~$d\in D$,
and $(db)_{d\in D}$
converges weakly to $(\bigvee D)b$ by a similar reasoning,
we get $(\bigvee D)b = b (\bigvee D)$.
\end{proof}

\begin{prop}
\label{prop:ultraweak-convergence}
Let~$(a_i)_i$ be a net in a von Neumann algebra~$\mathscr{A}$
such that
\begin{enumerate}
\item $(a_i)_i$ is \emph{norm bounded},
that is $\sup_i \|a_i\| <\infty$, and
\item $(a_i)_i$ is \emph{ultraweakly Cauchy},
that is, $(\varphi(a_i))_i$ is Cauchy for every
normal state~$\varphi\colon \mathscr{A}\to \mathbb{C}$.
\end{enumerate}
Then~$(a_i)_i$ converges ultraweakly.
\end{prop}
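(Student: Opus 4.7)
The plan is to represent $\mathscr{A}$ concretely via Theorem~\ref{thm:KGNS} as a von Neumann subalgebra of $\mathscr{B}(\mathscr{H})$ for a Hilbert space~$\mathscr{H}$ chosen so that the ultraweak topology on $\mathscr{A}$ coincides with the weak topology induced by $\mathscr{H}$. Once this identification is in place, I would construct the putative ultraweak limit as an honest bounded operator on $\mathscr{H}$ via the Riesz representation theorem for sesquilinear forms, and then locate it inside $\mathscr{A}$ using the fact that von Neumann subalgebras are weakly closed (Theorem~\ref{thm:neumann-weakly-closed}).

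In more detail, for each $x\in\mathscr{H}$ the map $a\mapsto \langle ax,x\rangle$ is a (vector) normal state of $\mathscr{A}$, so the hypothesis gives that $(\langle a_i x,x\rangle)_i$ is Cauchy in~$\mathbb{C}$; the standard polarization identity then upgrades this to Cauchyness of $(\langle a_i x,y\rangle)_i$ for all $x,y\in\mathscr{H}$. Since $\mathbb{C}$ is complete, this net converges to some scalar $B(x,y)$, and $B$ is plainly sesquilinear. Writing $M := \sup_i \|a_i\| < \infty$ (the given norm bound), the estimate $|\langle a_ix,y\rangle|\leq M\|x\|\|y\|$ passes to the limit to yield $|B(x,y)|\leq M\|x\|\|y\|$. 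The Riesz representation theorem then delivers a unique $a\in\mathscr{B}(\mathscr{H})$ with $\langle ax,y\rangle = B(x,y)$, and by construction $(a_i)_i$ converges to $a$ in the weak operator topology on $\mathscr{B}(\mathscr{H})$.

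Finally, each $a_i$ lies in $\mathscr{A}$, so by Theorem~\ref{thm:neumann-weakly-closed} the weak limit $a$ lies in $\mathscr{A}$ as well. The special choice of $\mathscr{H}$ in Theorem~\ref{thm:KGNS} ensures that the ultraweak topology on $\mathscr{A}$ agrees with the weak topology induced from $\mathscr{B}(\mathscr{H})$, so $(a_i)_i$ converges to $a$ ultraweakly, as required. The only real subtlety, and the point where Theorem~\ref{thm:KGNS} is doing nontrivial work, is guaranteeing that the ultraweak Cauchy hypothesis (formulated against all normal states of $\mathscr{A}$) already controls enough scalar nets on $\mathscr{H}$ to pin down a bounded operator; everything beyond that is a routine Riesz-plus-weak-closure calculation.
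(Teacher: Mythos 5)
Your proposal is correct and follows essentially the same route as the paper: represent $\mathscr{A}$ via Theorem~\ref{thm:KGNS} so that the ultraweak and induced weak topologies agree, obtain a bounded sesquilinear form from the Cauchy scalar nets, apply Riesz representation, and conclude via weak closedness (Theorem~\ref{thm:neumann-weakly-closed}). The only cosmetic difference is that you pass from diagonal to off-diagonal Cauchyness by polarization, whereas the paper invokes a Cauchy--Schwarz-type inequality; your polarization step is, if anything, the cleaner justification.
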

\begin{proof}
By Theorem~\ref{thm:KGNS},
we may assume without loss of generality
that~$\mathscr{A}$
is a von Neumann algebra of bounded operators
on some Hilbert space~$\mathscr{H}$
such that the weak topology on~$\mathscr{A}$
induced by~$\mathscr{H}$
coincides with the ultraweak topology.

Let~$x\in\mathscr{H}$ be given.
Note that if~$\|x\|=1$,
then $\left<x,-\,x\right>\colon\mathscr{A}\to\C$
is a normal state,
and so~$(\left<x,a_ix\right>)_i$ is Cauchy.
It follows easily that $(\left<x,a_ix\right>)_i$
is Cauchy for all~$x\in \mathscr{H}$.

Let~$x,y\in\mathscr{H}$ be given.
Since for all $a\in\mathscr{A}$, 
\begin{equation*}
|\left<x,ay\right>|^2 \ \leq\ \left<x,ax\right>\, \left<y,ay\right>,
\end{equation*}
we see that $(\left<x,a_iy\right>)_i$ is Cauchy.

Since $(x,y)\mapsto \lim_i \left<x,a_iy\right>$
gives a bilinear map on~$\mathscr{H}$,
which is bounded because
$(a_i)_i$ is norm bounded,
there is, by Riesz's representation theorem,
 a bounded operator~$a$ on~$\mathscr{H}$
with~$\left<ax,y\right> = \lim_i \left<a_ix,y\right>$
for all $x,y\in\mathscr{H}$.

Note that~$(a_i)_i$ converges weakly to~$a$.
Thus~$a\in\mathscr{A}$,
because~$\mathscr{A}$ is weakly closed by 
Theorem~\ref{thm:neumann-weakly-closed}.
Further, $(a_i)_i$ converges ultraweakly to~$a$ as well,
because the weak and ultraweak topologies coincide on~$\mathscr{A}$
by choice of~$\mathscr{H}$.
\end{proof}
\begin{lem}\label{lem:comprcpn}
Let~$a$ be an element of a von Neumann algebra~$\mathscr{A}$.
Then the linear map~$c\colon \mathscr{A}\to\mathscr{A},\
b\mapsto a^*ba$
is normal and completely positive.
\end{lem}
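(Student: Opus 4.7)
The plan is to handle normality and complete positivity separately, both by reducing to elementary facts about von Neumann algebras of bounded operators on a Hilbert space via Theorem~\ref{thm:KGNS}.

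For normality, I would represent $\mathscr{A}$ without loss of generality as a von Neumann subalgebra of $\mathscr{B}(\mathscr{H})$ for some Hilbert space $\mathscr{H}$ with the property that the ultraweak topology on $\mathscr{A}$ coincides with the weak topology induced by $\mathscr{H}$ (Theorem~\ref{thm:KGNS}). Given a directed bounded set $D \subseteq \sa{\mathscr{A}}$ with supremum $b := \bigvee D$, one has $d \to b$ strongly as $d$ ranges over $D$ (this is the standard fact about bounded monotone nets, e.g.\ Lemma~5.1.4 of \cite{kadison1997}, as already used in the proof of Proposition~\ref{prop:supremum-commutes}). The maps $x \mapsto a^* x$ and $x \mapsto xa$ on $\mathscr{B}(\mathscr{H})$ are weakly (and even ultraweakly) continuous, so $a^* d a \to a^* b a$ weakly. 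Since each $a^*da$ lies below $a^* b a$ when $d \leq b$ and the net $(a^* d a)_{d \in D}$ is monotone increasing (because $d \mapsto d' - d$ preserves positivity and hence so does $d \mapsto a^*(d'-d)a$), its weak limit $a^* b a$ must equal its supremum in $\sa{\mathscr{A}}$. Hence $c(\bigvee D) = \bigvee c(D)$, so $c$ is normal.

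For complete positivity, I would argue directly at the level of matrices. Fix $N \in \N$ and let $A \in M_N(\mathscr{A})$ be the diagonal matrix with $a$ on the diagonal. The amplification $c^{(N)} \colon M_N(\mathscr{A}) \to M_N(\mathscr{A})$ of $c$ is easily seen to be given by $c^{(N)}(B) = A^* B A$ for $B = (b_{ij}) \in M_N(\mathscr{A})$, because both sides have $(i,j)$-entry $a^* b_{ij} a$. If $B \geq 0$ in $M_N(\mathscr{A})$, then $B = Y^*Y$ for some $Y \in M_N(\mathscr{A})$, and so $A^* B A = (YA)^*(YA) \geq 0$. Thus $c^{(N)}$ is positive for every $N$, i.e.\ $c$ is completely positive.

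The only non-routine step is the normality argument, and even there the main content is already packaged: once one knows that bounded monotone nets converge strongly to their suprema and that left/right multiplication by a fixed operator is weakly continuous, the result is immediate. I do not expect any substantial obstacle.
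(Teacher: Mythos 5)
Your proof is correct. For complete positivity you take exactly the paper's route: amplify to $M_N(\mathscr{A})$, write the positive matrix $B$ as $C^*C$ (you call it $Y^*Y$), and observe $A^*BA=(CA)^*(CA)\geq 0$ with $A$ the diagonal matrix with $a$ on the diagonal. For normality the routes differ only in that the paper simply cites Lemma~1.7.4 of~\cite{sakai1971}, whereas you unfold a self-contained argument: represent $\mathscr{A}$ spatially via Theorem~\ref{thm:KGNS}, use that a bounded increasing net of self-adjoint operators converges strongly to its supremum (the same Lemma~5.1.4 of~\cite{kadison1997} the paper invokes in Proposition~\ref{prop:supremum-commutes}), note that $x\mapsto a^*xa$ is weakly continuous and order-preserving, and identify the weak limit of the increasing net $(a^*da)_{d\in D}$ with its supremum. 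This buys a proof from first principles at the cost of a page of routine topology; the only point worth making explicit is that the supremum of $\{a^*da\}$ computed in $\mathscr{B}(\mathscr{H})_{\mathrm{sa}}$ agrees with the one in $\sa{\mathscr{A}}$, which is immediate from the paper's definition of von Neumann subalgebra but should be said. No gap.
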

\begin{proof}
        \emph{(Normality)}\ 
        follows from Lemma~1.7.4 of \cite{sakai1971}.

        \noindent
        \emph{(Complete positivity)}\ 
        follows from Theorem~1 of~\cite{stinespring1955},
	but let us give an elementary proof.

        Let~$N\in\N$ be given.
        Let~$B$ be a positive $N\times N$-matrix over~$\mathscr{A}$.
        We must show that~$(a^*B_{ij}a)_{ij}$
        is a positive $N\times N$-matrix
        over~$\mathscr{A}$.
        Since~$B$ is positive,
        there is a $N\times N$-matrix~$C$
        with~$B=C^*C$.
        Note that
        \begin{equation*}
                (a^* B_{ij} a)_{ij}\ = \ 
        A^* B A \ \equiv \ A^* C^* C A \ =\ (CA)^*CA \ \geq 0,
        \end{equation*}
        where $A=(a)_{ij}$
        is a diagonal $N\times N$-matrix.
        Thus~$c$ is completely positive.
\end{proof}
\begin{cor}
\label{cor:corner-von-neumann}
For every projection~$p$ of a von Neumann algebra~$\mathscr{A}$,
$p\mathscr{A}p$
is a von Neumann subalgebra of~$\mathscr{A}$.
\end{cor}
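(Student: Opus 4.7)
The plan is to verify the two things needed: that $p\mathscr{A}p$ is a $C^*$-subalgebra of $\mathscr{A}$, and that it is closed under suprema of bounded directed sets of self-adjoint elements.

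First I would check the $C^*$-subalgebra conditions directly from $p^2 = p$ and $p^* = p$. The set $p\mathscr{A}p$ is evidently a linear subspace. Closure under $*$ follows from $(pap)^* = p^* a^* p^* = pa^*p$, and closure under multiplication from $(pap)(pbp) = pa p^2 b p = papbp$, using $p^2 = p$. For norm-closedness, note that the map $c\colon \mathscr{A} \to \mathscr{A}$, $a \mapsto pap$, is contractive (since $\|pap\| \leq \|p\|^2\|a\| \leq \|a\|$), and $p\mathscr{A}p$ is exactly the image of the idempotent $c$, hence equals the fixed-point set $\{b\in\mathscr{A} : pbp=b\}$. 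If $pa_np \to b$ in norm, then $pbp = \lim p(pa_np)p = \lim pa_np = b$, so $b \in p\mathscr{A}p$.

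Next I would handle the key ingredient distinguishing von Neumann subalgebras from mere $C^*$-subalgebras: closure under suprema of bounded directed self-adjoint subsets. Let $D \subseteq (p\mathscr{A}p)_{\mathrm{sa}}$ be bounded and directed. Since $\mathscr{A}$ is a von Neumann algebra, $\bigvee D$ exists in $\sa{\mathscr{A}}$, and I need to show $\bigvee D \in p\mathscr{A}p$. Here I would invoke Lemma~\ref{lem:comprcpn}, which tells us that $c(a) = pap$ is normal. Normality gives $c(\bigvee D) = \bigvee_{d\in D} c(d)$. But every $d \in D$ already lies in $p\mathscr{A}p$, so $c(d) = pdp = d$, and therefore $c(\bigvee D) = \bigvee D$, i.e., $p(\bigvee D)p = \bigvee D$. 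Hence $\bigvee D \in p\mathscr{A}p$.

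There is no real obstacle here; the proof is essentially bookkeeping, with Lemma~\ref{lem:comprcpn} doing the only nontrivial work (providing normality of compression-by-$p$). The one subtlety worth noting is that $p\mathscr{A}p$ is unital with unit $p$ rather than $1$, so the ``unital $C^*$-subalgebra'' terminology must be interpreted with the inherited unit; this does not affect the argument since the definition of \emph{von Neumann subalgebra} in the appendix only requires a $C^*$-subalgebra closed under directed suprema, not that the units agree.
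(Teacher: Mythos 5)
Your proof is correct and follows essentially the same route as the paper's: verify the $C^*$-subalgebra conditions directly (the paper likewise uses continuity of $a\mapsto pap$ for norm-closedness, which your fixed-point formulation makes slightly more explicit), and then obtain closure under bounded directed suprema from the normality of $a\mapsto pap$ supplied by Lemma~\ref{lem:comprcpn}, exactly as the paper does. Your remark about the unit being $p$ rather than $1$ is also consistent with the paper's definition of von Neumann subalgebra.
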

\begin{proof}
Surely, $p\mathscr{A}p$ is a $*$-subalgebra of~$\mathscr{A}$
with unit~$p$.
Since~$\|pap -pbp\|\leq \|p\|\|a-b\|\|p\|$
for all~$a,b\in\mathscr{A}$,
we see that~$p\mathscr{A} p$ is norm closed,
and~$p\mathscr{A}p$  is a $C^*$-subalgebra.

Let~$D$ be a bounded directed subset of~$(p\mathscr{A}p)_\mathrm{sa}$.
To prove that~$p\mathscr{A}p$ is a von Neumann subalgebra,
it suffices to show that the supremum~$\bigvee D$
of~$D$ in~$\mathscr{A}_\mathrm{sa}$ is in~$p\mathscr{A}p$.

Since~$a\mapsto pap$ is normal on~$\mathscr{A}$ by Lemma~\ref{lem:comprcpn},
and we have $d=pdp$ for all~$d\in D$,
we see that~$p(\bigvee D)p = \bigvee_{d\in D} pdp  = \bigvee D$,
and so~$\bigvee D \in \mathscr{A}$.
\end{proof}
\begin{prop}
        \label{prop:support-neumann}
        Let~$\mathscr{A}$
        be a von Neumann algebra.
        Let~$a\in \mathscr{A}$
        with $0\leq a \leq 1$
	be given.
        \begin{enumerate}
                \item
        \label{prop:support-neumann-1}
                        There is a smallest projection, $\ceil{a}$, above~$a$.
                \item
        \label{prop:support-neumann-2}
                        $\ceil{a}$ is the 
                        supremum of~$a \leq a^{\nicefrac{1}{2}}
                        \leq a^{\nicefrac{1}{4}} 
                        \leq a^{\nicefrac{1}{8}}
                        \leq\dotsb$.
                \item
        \label{prop:support-neumann-3}
			Then $ab=ba$ 
			implies $\ceil{a}b=b\ceil{a}$
			for all~$b\in\mathscr{B}$.
        \end{enumerate}
\end{prop}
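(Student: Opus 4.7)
The plan is to set $p:=\sup_n a^{1/2^n}$, where the supremum exists in $\sa{\mathscr{A}}$ by property~(I) of a von Neumann algebra since $(a^{1/2^n})_{n\geq 0}$ is an increasing sequence bounded above by~$1$. I will show that $p$ is a projection, that $p\geq a$ (immediate, since $a=a^{1/2^0}$), and that $p$ lies below every projection $q\geq a$. This simultaneously establishes~(1) and~(2) with $\ceil{a}=p$.

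To show $p$ is a projection, note that the $a^{1/2^m}$ pairwise commute, so Proposition~\ref{prop:supremum-commutes} gives that $p$ commutes with each $a^{1/2^n}$. Since $0\leq p\leq 1$, the elements $p$ and $1-p$ are commuting positives, so $p(1-p)\geq 0$, i.e.\ $p^2\leq p$. For the reverse, commutativity together with $a^{1/2^n}\leq p$ yields $a^{1/2^{n-1}}=(a^{1/2^n})^2\leq a^{1/2^n}\cdot p\leq p\cdot p=p^2$ for every $n\geq 1$; taking the supremum over~$n$ gives $p\leq p^2$. Hence $p^2=p$.

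For minimality, let $q$ be any projection with $q\geq a$. Corollary~\ref{cor:proj} gives $qa=aq=a$, so $a$ and~$q$ commute, and iterating Corollary~\ref{cor:commutes-sqrt} shows that $q$ commutes with every~$a^{1/2^n}$. The positive element $x:=(1-q)a^{1/2}(1-q)$ equals $a^{1/2}(1-q)$ (since $a^{1/2}$ commutes with $1-q$ and $(1-q)^2=1-q$) and satisfies $x^2=a(1-q)=0$, so $\|x\|^2=\|x^2\|=0$ forces $x=0$. Thus $a^{1/2}q=a^{1/2}$, and therefore $a^{1/2}=qa^{1/2}q\leq q\cdot 1\cdot q=q$. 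Induction on~$n$ (applied with $a^{1/2^n}$ in place of $a$) then yields $a^{1/2^n}\leq q$ for every~$n$, so $p\leq q$.

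For part~(3), if $b$ commutes with $a\geq 0$, then iterated use of Corollary~\ref{cor:commutes-sqrt} shows $b$ commutes with every~$a^{1/2^n}$, and Proposition~\ref{prop:supremum-commutes} then gives $b\,\ceil{a}=\ceil{a}\,b$. The main delicate point is the inequality $p\leq p^2$: the factor~$p$ must be slipped inside the supremum, which works here only because the crude bound $a^{1/2^{n-1}}\leq a^{1/2^n}\cdot p\leq p^2$ already places every approximant under~$p^2$, obviating any appeal to normality of one-sided multiplication past a directed supremum.
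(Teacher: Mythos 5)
Your proof is correct, and its skeleton matches the paper's: define $p=\bigvee_n a^{\nicefrac{1}{2^n}}$, prove $p$ is a projection via $p^2\leq p$ and $p\leq p^2$, and prove minimality by showing $\sqrt{a}(1-q)=0$ for any projection $q\geq a$ via the $C^*$-identity. The minimality argument and part~(3) are essentially identical to the paper's. Where you genuinely diverge is the key inequality $p\leq p^2$: the paper computes $p^2=\sqrt{p}\,p\,\sqrt{p}$ and invokes normality of the compression map $x\mapsto\sqrt{p}\,x\,\sqrt{p}$ (Lemma~\ref{lem:comprcpn}) twice to push $p$ inside the supremum, arriving at the double supremum $\bigvee_n\bigvee_m a^{\nicefrac{1}{2^{n+1}}}a^{\nicefrac{1}{2^m}}a^{\nicefrac{1}{2^{n+1}}}\geq a^{\nicefrac{1}{2^k}}$; you instead use only the elementary fact that a product of commuting positives is positive to get $a^{\nicefrac{1}{2^{n-1}}}=(a^{\nicefrac{1}{2^n}})^2\leq a^{\nicefrac{1}{2^n}}p\leq p^2$ directly, then take the supremum once. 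Your route is more elementary at that step --- it needs Proposition~\ref{prop:supremum-commutes} (to know $p$ commutes with each approximant, which the paper also needs) but dispenses with the normality of two-sided multiplication entirely; the paper's computation, on the other hand, generalizes more readily to situations where one wants to evaluate $\sqrt{p}\,x\,\sqrt{p}$ against a directed supremum that is not already dominated termwise. Both arguments are complete and rely only on results already established in the appendix.
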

\begin{proof}
Let~$p$ be the supremum of $a,\,a^{\nicefrac{1}{2}},\,
a^{\nicefrac{1}{4}},\,\dotsc$
in~$\mathscr{A}_\mathrm{sa}$.
Let~$q$ be a projection in~$\mathscr{A}$
with~$a\leq q$.
Then $aq=qa=a$ by Corollary~\ref{cor:proj},
and so $a^{\nicefrac{1}{2}}q=qa^{\nicefrac{1}{2}}$
by Corollary~\ref{cor:commutes-sqrt}.
Since~$a(1-q)=0$,
we have 
\begin{equation*}
\|\sqrt{a}(1-q)\|^2 \,=\,\|(1-q)a(1-q)\|=0
\end{equation*}
by the $C^*$-identity,
and so~$\sqrt{a}(1-q)=0$,
and thus~$\sqrt{a}q = \sqrt{a}$.
Then~$\sqrt{a}=\sqrt{a}q^2=q\sqrt{a}q\leq q$.
With a similar reasoning,
we get $a^{\nicefrac{1}{4}}\leq q$,
and $a^{\nicefrac{1}{8}}\leq q$, and so on.
It follows that~$p\leq q$,
by definition of~$p$.

Thus,
to show that~$p$ is the least projection above~$a$,
we only need to show that~$p$
is a projection.
Since~$0\leq p\leq 1$
(and thus $p^2\leq p$)
it suffices to show that~$p\leq p^2$.

First note that
any~$b\in\mathscr{A}$
that commutes with~$a$,
commutes with~$a^{\nicefrac{1}{2}}$,
and with~$a^{\nicefrac{1}{4}}$, etc.,
and thus~$b$ commutes with~$p$ 
by Proposition~\ref{prop:supremum-commutes}.

In particular,
since each~$a^{\nicefrac{1}{2^n}}$ commutes with~$a$,
we see that $a^{\nicefrac{1}{2^n}}$ commutes with~$p$.
Then, by Lemma~\ref{lem:comprcpn},
\begin{alignat*}{3}
p^2 \ &=\ \sqrt{p} p\sqrt{p} \\
&=\ \textstyle \bigvee_n \, \sqrt{p} \,a^{\nicefrac{1}{2^n}}\, \sqrt{p}\\
&=\ \textstyle \bigvee_n \, 
	a^{\nicefrac{1}{2^{n+1}}}\ p\ a^{\nicefrac{1}{2^{n+1}}}\\
&=\ \textstyle \bigvee_n\bigvee_m \, 
	a^{\nicefrac{1}{2^{n+1}}}\,a^{\nicefrac{1}{2^m}}
	\,a^{\nicefrac{1}{2^{n+1}}}.
\end{alignat*}
Thus $p^2 \geq a^{\nicefrac{1}{2^k}}$
for every~$k\in \N$,  and so~$p^2 \geq p$.

Hence~$p$ is a projection.
\end{proof}
\begin{prop}
\label{prop:support-ineq}
Let~$f\colon \mathscr{A}\to\mathscr{B}$
be a positive linear contraction between von Neumann algebras.
Let~$a\in \mathscr{A}$.

Then $f(\ceil{a}) \leq \ceil{f(a)}$,
and~$\ceil{f(\ceil{a})}=\ceil{f(a)}$.
\end{prop}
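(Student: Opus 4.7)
My plan is to approximate $\ceil{a}$ from below in $C^*(a)$ by elements bounded by positive multiples of $a$, apply positivity of $f$ to control the images, and then pass to the supremum using normality of $f$ (which I take to be implicit in the statement, since the conclusion fails for non-normal positive contractions, e.g.\ for a singular state on $\ell^\infty$ applied to $a=(1/n)_n$). Assuming $0 \leq a \leq 1$, I would set $b_n := \chi_n(a)$ via continuous functional calculus, with $\chi_n(t) := \min(nt,1)$. From $t \leq \min(nt,1) \leq nt$ on $[0,1]$ (for $n \geq 1$) I obtain $a \leq b_n \leq na$ and $b_n \in [0,1]_\mathscr{A}$.

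Second, I would establish $\bigvee_n b_n = \ceil{a}$. On one side, $\chi_n(0)=0$ forces $b_n = \ceil{a} b_n \ceil{a}$ (since $\ceil{a}a = a$ by Cor.~\ref{cor:proj}), and $b_n \leq 1$ then gives $b_n \leq \ceil{a}$. On the other, the supremum $p \in \mathscr{A}_{\mathrm{sa}}$ of the increasing bounded family $(b_n)$ can be computed inside the commutative von Neumann algebra generated by $a$ via the spectral theorem (Thm.~\ref{thm:spectral}); there $p$ corresponds to $\sup_n \min(n\hat a,1) = \mathbf{1}_{\{\hat a > 0\}}$, a projection above $a$, so $\ceil{a} \leq p$ by minimality (Prop.~\ref{prop:support-neumann}).

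The crux is the inequality $f(b_n) \leq q$, where $q := \ceil{f(a)}$ and $q' := 1-q$. Contractivity gives $f(a) \leq q$, so $q'f(a)q' \leq q'qq' = 0$, and applying positivity of $f$ to $b_n \leq na$ yields $0 \leq q'f(b_n)q' \leq n\,q'f(a)q' = 0$. By the $C^*$-identity, $\|\sqrt{f(b_n)}\,q'\|^2 = \|q'f(b_n)q'\| = 0$, so $\sqrt{f(b_n)}\,q' = 0$; hence $f(b_n)q' = q'f(b_n) = 0$ and $f(b_n) = q f(b_n) q \leq \|f(b_n)\|\,q \leq q$. Normality then gives $f(\ceil{a}) = \bigvee_n f(b_n) \leq q$, which is the first inequality. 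The identity $\ceil{f(\ceil{a})} = \ceil{f(a)}$ falls out by monotonicity of $\ceil{\cdot}$ applied to this inequality and to $f(a) \leq f(\ceil{a})$ (coming from $a \leq \ceil{a}$).

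The main obstacle is choosing the right approximating sequence. The decomposition $\ceil{a} = \bigvee_n a^{1/2^n}$ from Prop.~\ref{prop:support-neumann} is \emph{not} helpful here, because the roots $a^{1/2^n}$ are not dominated by any positive multiple of $a$, so positivity of $f$ gives no control on $f(a^{1/2^n})$ in terms of $f(a)$. The cut-offs $\min(na,1)$ repair this by being bounded simultaneously by $1$ and by $na$; those two bounds, combined with the $C^*$-identity, deliver $f(b_n) \leq \ceil{f(a)}$ using only positivity and contractivity, and normality then upgrades this pointwise estimate to the desired inequality at the supremum.
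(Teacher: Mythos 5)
Your proof is correct, but it takes a genuinely different route from the paper's --- and, amusingly, the paper uses precisely the decomposition you dismiss. The paper's proof writes $\ceil{a}=\bigvee_n a^{\nicefrac{1}{2^n}}$ and controls the images $f(a^{\nicefrac{1}{2^n}})$ not by positivity alone but by Kadison's generalized Schwarz inequality $f(b)^2\leq f(b^2)$ together with operator monotonicity of the square root, giving $f(a^{\nicefrac{1}{2^n}})\leq f(a)^{\nicefrac{1}{2^n}}$ and hence $f(\ceil{a})\leq\bigvee_n f(a)^{\nicefrac{1}{2^n}}=\ceil{f(a)}$ in one line; so your parenthetical claim that the roots give ``no control'' is an overstatement --- they give no control \emph{from positivity alone}, which is exactly why the paper imports Kadison's theorem. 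Your cut-offs $b_n=\min(na,1)$ buy elementarity: the two-sided bound $a\leq b_n\leq na$ lets you squeeze $f(b_n)$ under $\ceil{f(a)}$ using only positivity, contractivity and the $C^*$-identity, at the price of a somewhat longer verification that $\bigvee_n b_n=\ceil{a}$ (you need the commutative functional calculus and the compatibility of suprema with the von Neumann subalgebra generated by $a$ --- standard facts, but ones the paper's appendix does not supply, whereas $\bigvee_n a^{\nicefrac{1}{2^n}}=\ceil{a}$ is already Proposition~\ref{prop:support-neumann}). The closing step, deducing $\ceil{f(\ceil{a})}=\ceil{f(a)}$ from the first inequality and monotonicity of $\ceil{\cdot}$, is the same in both proofs. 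Finally, your observation that the statement as printed omits normality of $f$ --- and genuinely fails without it, e.g.\ for a singular state on $\ell^\infty$ applied to $a=(\nicefrac{1}{n})_n$ --- is a good catch: the paper's own proof also silently invokes normality in the very first line.
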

\begin{proof}
Since $\ceil{a}=\bigvee_n a^{\nicefrac{1}{2^n}}$
by Proposition~\ref{prop:support-neumann},
and~$f$ is normal,
we have
\begin{equation*}
\textstyle f(\ceil{a})\ = \ 
\bigvee_n f(a^{\nicefrac{1}{2^n}})
\ \stackrel{(*)}\leq \ \bigvee_n f(a)^{\nicefrac{1}{2^n}}
\ = \ \ceil{f(a)}. 
\end{equation*}
To justify Inequality~$(*)$
we claim that~$f(\sqrt{b})\leq \sqrt{f(b)}$ for all~$b\in \mathscr{B}_+$.
Since~$\sqrt{-}$ is order preserving\cite{pedersen1972},
it suffices to show that~$f(\sqrt{b})^2 \leq f(\smash{\sqrt{b}}^2)$,
and this has been done in Theorem~1 of~\cite{kadison1952}.

Let prove that $\ceil{f(\ceil{a})}=\ceil{f(a)}$.
On the one hand,
we have $\ceil{f(\ceil{a})}\geq \ceil{f(a)}$,
because $\ceil{a}\geq a$.
On the other hand,
since $\ceil{f(a)}$ is a projection,
and we have just shown that $f(\ceil{a})\leq \ceil{f(a)}$,
we get $\ceil{f(\ceil{a})}\leq \ceil{f(a)}$
by definition of~$\ceil{f(a)}$.
\end{proof}
%
%
%

\begin{thm}[Gardner]\label{thm:gardner}
    For a positive linear map~$f\colon \mathscr{A} \to \mathscr{B}$
    between unital~C$^*$-algebras, the following are equivalent.
    \begin{enumerate}
        \item[(ii)] $f(1)\cdot f(ab)\,=\,f(a)\cdot f(b)$
                   for all $a,b\in \mathscr{A}$.
        \item[(iii)$'$] $f$ is $2$-positive, and
	for all~$a,b\in \mathscr{A}_+$
		with
	$ab=0$ we have~$f(a)f(b)=0$.
    \end{enumerate}
\end{thm}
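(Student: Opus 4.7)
The plan is to prove the two implications separately, with the harder direction being (iii)$'$ $\Rightarrow$ (ii).

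For (ii) $\Rightarrow$ (iii)$'$, the orthogonality half is immediate: if $a,b\in\pos{\mathscr{A}}$ with $ab=0$, then (ii) gives $f(a)f(b) = f(1)f(ab) = 0$. The 2-positivity requires more work. The key observation is that specializing $b=1$ in (ii) yields $f(1)f(a) = f(a)f(1)$, so $f(1)$ is central in $f(\mathscr{A})$. Next, one forms the appropriate "renormalization" of $f$ on the corner $\ceil{f(1)}\mathscr{B}\ceil{f(1)}$: using the pseudoinverse-approximation technique developed in the proof of Proposition~\ref{prop:compression} (taking $q_n$ to be a sequence approximating the pseudoinverse of $\sqrt{f(1)}$), one can define a linear map $g\colon\mathscr{A}\to\ceil{f(1)}\mathscr{B}\ceil{f(1)}$ satisfying $f(a) = f(1)\cdot g(a)$. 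Then (ii) translates into $g(ab)=g(a)g(b)$, i.e.~$g$ is a $*$-homomorphism (with $*$-preservation following since $f$ is positive, hence $*$-linear). Since $*$-homomorphisms are completely positive, and multiplication by the central positive element $f(1)$ preserves complete positivity, $f$ itself is completely positive and in particular 2-positive.

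For the harder direction (iii)$'$ $\Rightarrow$ (ii), the strategy is to exploit 2-positivity through the Kadison--Schwarz inequality together with the orthogonality-preserving condition. By bilinearity and the standard decompositions $a = \Re a + i \Im a$ and $\Re a = (\Re a)_+ - (\Re a)_-$, I would reduce to verifying $f(1)f(ab)=f(a)f(b)$ for $a,b\in \pos{\mathscr{A}}$. For such $a$, apply the spectral theorem (Theorem~\ref{thm:spectral}) to obtain a norm-limit approximation $a \approx \sum_i \lambda_i p_i$ by finite linear combinations of mutually orthogonal spectral projections, and similarly for $b$. By (iii)$'$, $f(p_i)f(p_j)=0$ whenever $p_ip_j=0$, which begins to force a multiplicative structure on $f$ restricted to projections from a single commutative subalgebra.

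The main obstacle will be handling the non-commutativity of $a$ and $b$: their spectral projections $p_i^a, p_j^b$ are not mutually orthogonal in general, so one cannot simply multiply the resolutions term-by-term. The key technical device (following Gardner's original argument) is to use 2-positivity applied to the positive $2\times 2$-matrix $\bigl(\begin{smallmatrix} 1 & a \\ a^* & a^*a\end{smallmatrix}\bigr) = \bigl(\begin{smallmatrix} 1 \\ a^*\end{smallmatrix}\bigr)(1\ a)$, which after applying $f$ componentwise yields the Kadison--Schwarz-type inequality $f(a^*)f(1)^{-1}f(a) \leq f(a^*a)$ (after suitable reduction modulo the kernel of $f(1)$, again using the corner $\ceil{f(1)}\mathscr{B}\ceil{f(1)}$). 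Combining this with the orthogonality preservation applied to the positive-part/negative-part decomposition of $a-b$ and to products of spectral projections gives the required identity $f(1)f(ab)=f(a)f(b)$ first on $\pos{\mathscr{A}}\times\pos{\mathscr{A}}$ and then, by bilinearity, on all of $\mathscr{A}\times\mathscr{A}$.
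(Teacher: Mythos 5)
The paper does not actually prove this statement: its ``proof'' is a citation of Theorem~2 of Gardner's paper, so your sketch cannot be compared against an argument in the text and has to stand on its own. It does not.

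The direction (iii)$'\Rightarrow$(ii) has two genuine gaps. First, the opening reduction is unavailable: in a general unital $C^*$-algebra a positive element is \emph{not} a norm limit of finite linear combinations of its spectral projections, because those projections need not belong to the algebra at all (in $C[0,1]$ the only projections are $0$ and $1$, so the element $x\mapsto x$ admits no such approximation). To use spectral projections you would have to pass to the enveloping von Neumann algebra $\mathscr{A}^{**}$, and then you would need the hypothesis ``$ab=0\Rightarrow f(a)f(b)=0$ for $a,b\in\pos{\mathscr{A}}$'' to survive the weak$^*$ extension to $\pos{(\mathscr{A}^{**})}$ --- this is exactly the kind of limit statement that does not follow from the norm-level hypothesis without further argument, and you do not address it. Second, and more seriously, the concluding sentence (``Combining this with the orthogonality preservation applied to \ldots\ gives the required identity'') is not an argument. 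The entire difficulty of this implication is that orthogonality preservation only constrains $f$ on \emph{commuting, disjoint} pairs, whereas the target identity $f(1)f(ab)=f(a)f(b)$ involves arbitrary non-commuting $a,b$; the mechanism that bridges this gap (in Gardner's proof, a genuine chain of deductions starting from $|f(a)|=f(|a|)$ for self-adjoint $a$, which follows from $f(a_+)f(a_-)=0$) is precisely what is missing from your sketch. Naming the obstacle is not the same as overcoming it.

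The direction (ii)$\Rightarrow$(iii)$'$ is closer to working (the orthogonality half is indeed immediate), but the construction of $g$ with $f=f(1)\cdot g$ by the pseudoinverse-approximation of $\sqrt{f(1)}$ borrows a von Neumann algebra technique: the limits in the proof of Proposition~\ref{prop:compression} are ultraweak limits, which need not exist in a general $C^*$-algebra $\mathscr{B}$, and the corner $\ceil{f(1)}\mathscr{B}\ceil{f(1)}$ is not even defined there since $\ceil{f(1)}$ need not exist. Again one would have to work in $\mathscr{B}^{**}$ and then argue that multiplicativity of $g$ can be checked there, which is plausible but not free. As it stands, neither direction is proved.
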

\begin{proof}
See Theorem~2 of~\cite{gardner}.
\end{proof}
\begin{prop}\label{prop:awmult}
    For a $2$-positive normal unital linear
     map~$f\colon \mathscr{A} \to \mathscr{B}$
        between von Neumann algebras
	the following are equivalent.
    \begin{enumerate}
     \item $f$ is a $*$-homomorphism.\label{prop:awmult-1}
     \item $f$ preserves projections.
     \label{prop:awmult-2}
    \item $\ceil{f(a)} = f(\ceil{a})$ for every~$a\in[0,1]_\mathscr{A}$.
    \label{prop:awmult-3}
    \end{enumerate}
\end{prop}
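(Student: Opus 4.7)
The plan is to close the cycle $(1)\Longrightarrow(2)\Longrightarrow(3)\Longrightarrow(1)$. The implication $(1)\Longrightarrow(2)$ is immediate from $f(p)^2=f(p^2)=f(p)$ and $f(p)^*=f(p)$ for a projection~$p$. For $(2)\Longrightarrow(3)$, Proposition~\ref{prop:support-ineq} already yields $\ceil{f(\ceil{a})}=\ceil{f(a)}$; since $\ceil{a}$ is a projection, $f(\ceil{a})$ is one too by~(2), so it coincides with its own ceiling, and we obtain $f(\ceil{a})=\ceil{f(a)}$.

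The substantive direction is $(3)\Longrightarrow(1)$, which I would derive from Gardner's Theorem~\ref{thm:gardner}. Since $f$ is $2$-positive, it suffices to prove that $ab=0$ implies $f(a)f(b)=0$ for all $a,b\in\mathscr{A}_+$: Gardner then delivers $f(1)\cdot f(ab)=f(a)f(b)$, and unitality upgrades this to multiplicativity $f(ab)=f(a)f(b)$. Positivity of $f$ automatically produces $f(a^*)=f(a)^*$, so multiplicativity is enough to make $f$ a $*$-homomorphism.

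To handle the vanishing-product step, rescale to $a,b\in[0,1]_\mathscr{A}$ and first establish $\ceil{a}\ceil{b}=0$. The $C^*$-identity applied to $ab=0$ gives $\|a^{\nicefrac{1}{2}}b\|^2=\|bab\|\leq\|b\|\|ab\|=0$, hence $a^{\nicefrac{1}{2}}b=0$; iterating yields $a^{\nicefrac{1}{2^n}}b=0$ for every $n\in\N$. Since $\ceil{a}=\bigvee_n a^{\nicefrac{1}{2^n}}$ by Proposition~\ref{prop:support-neumann} and right-multiplication by $b$ is ultraweakly continuous (Lemma~1.7.4 of \cite{sakai1971}), we conclude $\ceil{a}b=0$; repeating with $b$ in place of $a$ gives $\ceil{a}\ceil{b}=0$ and hence $\ceil{a}+\ceil{b}\leq 1$. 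Applying $f$ and invoking~(3), the projections $\ceil{f(a)}=f(\ceil{a})$ and $\ceil{f(b)}=f(\ceil{b})$ satisfy $\ceil{f(a)}+\ceil{f(b)}\leq f(1)=1$, so Corollary~\ref{lem:projsumunderone} forces them to be orthogonal. Since $f(a)\leq\ceil{f(a)}$ and $f(b)\leq\ceil{f(b)}$, Corollary~\ref{cor:proj} yields $f(a)=\ceil{f(a)}f(a)\ceil{f(a)}$ and similarly for $b$, whence $f(a)f(b)=0$. The sole nontrivial obstacle is the passage from $a^{\nicefrac{1}{2^n}}b=0$ for all~$n$ to $\ceil{a}b=0$; once this is in hand, everything else is clean algebraic bookkeeping with supports.
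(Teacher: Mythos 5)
Your proposal is correct and follows essentially the same route as the paper: the easy implications $(1)\Rightarrow(2)\Rightarrow(3)$ via Proposition~\ref{prop:support-ineq}, and $(3)\Rightarrow(1)$ via Gardner's Theorem~\ref{thm:gardner} by reducing $f(a)f(b)=0$ to $\ceil{f(a)}\,\ceil{f(b)}=0$, which follows from $\ceil{a}\ceil{b}=0$, statement~(3), $f(1)=1$, and Corollary~\ref{lem:projsumunderone}. The only (immaterial) divergence is how you reach $\ceil{a}b=0$ --- you iterate the $C^*$-identity and pass to the supremum by separate ultraweak continuity of multiplication, whereas the paper applies Proposition~\ref{prop:support-ineq} to $x\mapsto\sqrt{a}x\sqrt{a}$ --- and both work.
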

\begin{proof}
    \emph{(\ref{prop:awmult-1} $\Longrightarrow$ \ref{prop:awmult-2})}\ 
    Easy.

    \emph{(\ref{prop:awmult-2} $\Longrightarrow$ \ref{prop:awmult-3})}\ 
     Let~$a\in [0,1]_\mathscr{A}$ be given.
     By Proposition~\ref{prop:support-ineq}
     we have $\ceil{f(a)}= \ceil{f(\ceil{a})} = f(\ceil{a})$,
     where the latter equality follows from
     the fact that~$f(\ceil{a})$ is a projection.

    \emph{(\ref{prop:awmult-3} $\Longrightarrow$ \ref{prop:awmult-1})}\ 
    Let $a,b\in\mathscr{A}_+$ with $ab=0$ be given.
    To prove that~$f$ is multiplicative,
    it suffices to show that $f(a)f(b)=0$
    by Theorem~\ref{thm:gardner}
    (since~$f(1)=1$).

    If either~$a$ or~$b$ is zero, we are done,
    so we may assume that~$a\neq 0$ and~$b\neq 0$.
    We may also assume that~$a,b \leq 1$
    (by replacing them by 
    $\nicefrac{a}{\|a\|}$ and~$\nicefrac{b}{\|b\|}$
    if necessary).

    It suffices to show that $\ceil{f(a)}\,\ceil{f(b)}=0$,
    because then $f(a)f(b)= f(a)\ceil{f(a)}\,\ceil{f(b)}f(b)=0$,
    where we used that~$f(a)=f(a)\ceil{f(a)}$ (see 
Proposition~\ref{prop:support-neumann}).

    Note that~$a$ and~$b$ commute, because~$ba = b^*a^* = (ab)^* = 0 = ab$.
    Then $\sqrt{a}$ and~$\sqrt{b}$ commute as well,
    and so~$\sqrt{a}b\sqrt{a}=ab=0$.
    Then~$0\leq \sqrt{a} \ceil{b}\sqrt{a} \leq \ceil{\sqrt{a}b\sqrt{a}}=0$,
    and so~$a\ceil{b}=0$.
    By repeating this argument,
    we see that~$\ceil{a}\ceil{b}=0$.

    It follows that~$\ceil{a} + \ceil{b} $ is a projection, and
     \begin{equation*}
      \ceil{f(a)} \,+\, \ceil{f(b)}\ =\ 
    f(\ceil{a}) \,+\, f(\ceil{b}) \ \leq\  f(1) \ =\  1.
    \end{equation*}
    Thus Corollary~\ref{lem:projsumunderone}
        implies that~$\ceil{f(a)}\,\ceil{f(b)} = 0$.
\end{proof}
\begin{cor}\label{cor:invprocmult}
Let~$f$
be an invertible process between von Neumann
algebras such that~$f^{-1}$ is a process as well.
Then~$f$ is a $*$-isomorphism.
\end{cor}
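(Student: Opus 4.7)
The plan is to reduce the statement to Proposition~\ref{prop:awmult} by showing that $f$ preserves projections, for which Corollary~\ref{cor:isometry-preserves-projections} is tailor-made, once we establish that $f$ is unital.

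First, I would check that $f$ (and symmetrically $f^{-1}$) is unital. Since $f$ is a positive contraction, $f(1)$ is a positive element of norm at most $1$, so $f(1)\leq 1$; likewise $f^{-1}(1)\leq 1$. Applying the positive (hence order-preserving) map $f$ to the latter inequality gives $1 = f(f^{-1}(1)) \leq f(1)$, and combining with $f(1)\leq 1$ yields $f(1)=1$. The same argument with the roles reversed shows $f^{-1}(1)=1$.

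Next, since $f$ is now a positive unital linear bijection whose inverse is also positive (being a process), Corollary~\ref{cor:isometry-preserves-projections} applies and tells us that $f$ preserves projections. Because $f$ is moreover completely positive (hence in particular $2$-positive) and normal, Proposition~\ref{prop:awmult} in its form (\ref{prop:awmult-2})$\Rightarrow$(\ref{prop:awmult-1}) gives that $f$ is a $*$-homomorphism. Being bijective, $f$ is then a $*$-isomorphism.

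There is no real obstacle here: everything is bookkeeping with the tools already developed. The only subtle point is the initial step confirming unitality from contractivity, which uses that for a positive element $a$ in a unital $C^*$-algebra, $\|a\|\leq 1$ entails $a\leq 1$; this follows from the characterisation of the order via the norm (Lemma~\ref{lem:order-via-norm}) or directly from functional calculus on the commutative $C^*$-subalgebra generated by $a$.
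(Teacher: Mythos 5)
Your proposal is correct and follows essentially the same route as the paper: establish unitality of $f$ and $f^{-1}$ from contractivity plus the identity $f(f^{-1}(1))=1$, then invoke Corollary~\ref{cor:isometry-preserves-projections} to get preservation of projections, and conclude via Proposition~\ref{prop:awmult}. The only difference is cosmetic — the paper applies $f^{-1}$ to $f(1)\leq 1$ where you apply $f$ to $f^{-1}(1)\leq 1$ — and your extra remark justifying $\|a\|\leq 1\Rightarrow a\leq 1$ for positive $a$ is a detail the paper leaves implicit.
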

\begin{proof}
Since~$f(1)\leq 1 = f(f^{-1}(1))$
we have $1\leq f^{-1}(1)\leq 1$,
and so~$f^{-1}(1)=1$.
Thus both~$f$ and~$f^{-1}$ are unital.
Then
$f$ preserves projections
by Corollary~\ref{cor:isometry-preserves-projections},
and is thus a $*$-homomorphism by Proposition~\ref{prop:awmult}.

Hence~$f$ is a $*$-isomorphism.
\end{proof}
\section{Ultraweak limits of maps}
\begin{lem}
        \label{lem:normal-uw}
        For a positive linear map~$f\colon \mathscr{A}\to\mathscr{B}$
	between von Neumann algebras
        the following are equivalent.
        \begin{enumerate}
                \item 
                        \label{lem:normal-uw-normal}
                        $f$ is normal.
                \item
                        \label{lem:normal-uw-uw}
                        $f$ is ultraweakly continuous.
                \item
                        \label{lem:normal-uw-uw-eff}
                        The restriction of~$f$
                        to a map~$\Eff{\mathscr{A}}\to\mathscr{B}$
                        is ultraweakly continuous.
        \end{enumerate}
\end{lem}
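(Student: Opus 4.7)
The plan is to establish the three implications $(2) \Rightarrow (3)$, $(3) \Rightarrow (1)$, and $(1) \Rightarrow (2)$, which together give the full equivalence. The first is a triviality, since the restriction of an ultraweakly continuous map is ultraweakly continuous. For the other two the main tool will be the fact (already invoked in the proof of Proposition~\ref{prop:supremum-commutes}) that a bounded increasing net of self-adjoint elements in a von Neumann algebra converges strongly, hence ultraweakly, to its supremum.

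For $(3) \Rightarrow (1)$, I would take a bounded directed set $D \subseteq \sa{\mathscr{A}}$ with supremum~$s$. By an affine rescaling $d \mapsto (d+M)/(2M)$, where~$M$ is a uniform bound for~$D$, I may assume without loss of generality that $D \subseteq \Eff{\mathscr{A}}$ and $s \in \Eff{\mathscr{A}}$. The net $(d)_{d \in D}$ then converges ultraweakly to~$s$ by the monotone convergence fact, so by hypothesis $(f(d))_{d \in D}$ converges ultraweakly to~$f(s)$. On the other hand, positivity of~$f$ ensures that $(f(d))_{d \in D}$ is itself a bounded increasing net in~$\sa{\mathscr{B}}$, and hence converges ultraweakly to $\bigvee f(D)$. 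Uniqueness of ultraweak limits yields $f(s) = \bigvee f(D)$, so $f$ is normal.

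For $(1) \Rightarrow (2)$, assume $f$ is normal and let $(a_i)_i$ converge ultraweakly to~$a$ in~$\mathscr{A}$. To show $(f(a_i))_i$ converges ultraweakly to~$f(a)$, fix a normal state~$\psi$ of~$\mathscr{B}$ and set $\eta := \psi \circ f \colon \mathscr{A} \to \C$. Then $\eta$ is a normal positive linear functional: for any bounded directed $D \subseteq \sa{\mathscr{A}}$ with supremum~$s$, normality of~$f$ gives $f(s) = \bigvee f(D)$, and applying the normal state~$\psi$ yields $\eta(s) = \bigvee \eta(D)$. If $\eta(1) = 0$, then by the Cauchy--Schwarz inequality for positive functionals $|\eta(b)|^2 \leq \eta(1)\,\eta(b^*b) = 0$, so $\eta = 0$ and the convergence is trivial. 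Otherwise $\eta/\eta(1)$ is a normal state of~$\mathscr{A}$, which is ultraweakly continuous by the very definition of the ultraweak topology, and hence $\eta(a_i) \to \eta(a)$, as required.

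The step deserving most care is $(1) \Rightarrow (2)$: one must notice that ultraweak continuity into~$\mathscr{B}$ amounts to pointwise convergence against every normal state of~$\mathscr{B}$, and that precomposition with a normal positive~$f$ sends normal states of~$\mathscr{B}$ to normal positive functionals on~$\mathscr{A}$. Everything else is routine bookkeeping around monotone convergence and the affine rescaling onto~$\Eff{\mathscr{A}}$.
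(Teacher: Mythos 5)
Your proof is correct and follows essentially the same route as the paper's: $(1)\Rightarrow(2)$ by precomposing with normal states of~$\mathscr{B}$, $(2)\Rightarrow(3)$ trivially, and $(3)\Rightarrow(1)$ by reducing a bounded directed set to~$\Eff{\mathscr{A}}$ and using that a bounded monotone net converges ultraweakly to its supremum. The only differences are cosmetic: you conclude $(3)\Rightarrow(1)$ via uniqueness of ultraweak limits where the paper applies a normal state of~$\mathscr{B}$ and compares suprema of real nets, and you spell out the normalization $\eta/\eta(1)$ (including the degenerate case $\eta(1)=0$) that the paper leaves implicit.
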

\begin{proof}
        \emph{(\ref{lem:normal-uw-normal}
                $\Longrightarrow$\ref{lem:normal-uw-uw})}\ 
        Let~$\varphi\colon \mathscr{B}\to \C$
        be a normal state.
        To prove that~$f$ is ultraweakly continuous
        we must show that
        $\varphi\circ f\colon \mathscr{A}\to\C$ 
        is continuous with respect to the ultraweak
        topology on~$\mathscr{A}$ and the standard topology on~$\C$.
        It suffices to show that~$\varphi\circ f$
        is normal,
        which indeed it is, 
        as both~$\varphi$ and~$f$ are normal.

        \emph{(\ref{lem:normal-uw-uw}
                $\Longrightarrow$\ref{lem:normal-uw-uw-eff})}\ 
        is trivial.

        \emph{(\ref{lem:normal-uw-uw-eff}
                $\Longrightarrow$\ref{lem:normal-uw-normal})}\ 
        Let~$D$
        be a bounded directed set of self-adjoint elements of~$\mathscr{A}$
        with supremum,~$\bigvee D$.
        Then as~$f$ is positive, $\{\,f(d)\colon\, d\in D \,\}$
        is directed 
        and bounded by~$f(\,\bigvee D\,)$,
        and thus has a supremum, $\bigvee_{d\in D} f(d)$.
        To show that~$f$ is normal,
        we must prove that~$f(\,\bigvee D\,) = \bigvee_{d\in D} f(d)$.
        Since~$f$ is linear,
        we may assume without loss of generality
        that~$D\subseteq\Eff{\mathscr{A}}$.
        Let~$\varphi\colon\mathscr{B}\to\C$
        be a normal state.
        It suffices to show that
        \begin{equation}
                \label{eq:lem:normal-uw-1}
                \textstyle
                \varphi(f(\bigvee D)) \ = \ 
                \varphi(\bigvee_{d\in D} f(d)).
        \end{equation}
        Note that~$D$ (as net) converges ultraweakly to~$\bigvee D$
        in~$\mathscr{A}$,
        and thus in~$\Eff{\mathscr{A}}$ as well.
        Since the restriction of~$f$ to~$\Eff{\mathscr{A}}$
        is ultraweakly continuous,
        the net $(f(d))_{d\in D}$
        converges ultraweakly to~$f(\,\bigvee D\,)$ in~$\mathscr{B}$.
        So~$(\varphi(f(d)))_{d\in D}$
        converges to~$\varphi(f(\bigvee D))$.
        Since~$(\varphi(f(d)))_{d\in D}$
        is directed,
        $\varphi(f(\bigvee D))$
        is in fact its supremum.
        Finally, since~$\varphi$ is normal,
        $ \varphi(\,\bigvee_{d\in D}f(d)\,)
        = \bigvee_{d\in D} \varphi(f(d))
        = \varphi(f(\,\bigvee D\,))$.
        We have proven Statement~\eqref{eq:lem:normal-uw-1},
        so~$f$ is normal.
\end{proof}

\begin{cor}
        \label{cor:uw-lim-normal}
        Let~$f\colon\mathscr{A}\to\mathscr{B}$
        be a positive linear map
	between von Neumann algebras.
        Let~$(f_\alpha)_{\alpha\in D}$
        be a net of normal positive linear maps
        from~$\mathscr{A}$ to~$\mathscr{B}$
        which converges uniformly on~$\Eff{\mathscr{A}}$ ultraweakly
        to~$f$.

        Then~$f$ is normal.
\end{cor}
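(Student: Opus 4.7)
The plan is to combine the equivalence in Lemma~\ref{lem:normal-uw} with a standard $\nicefrac{\varepsilon}{3}$ argument. By Lemma~\ref{lem:normal-uw}, to show that~$f$ is normal it suffices to show that the restriction $f\colon\Eff{\mathscr{A}}\to\mathscr{B}$ is ultraweakly continuous. So I would fix a net~$(a_\beta)_\beta$ in~$\Eff{\mathscr{A}}$ converging ultraweakly to some~$a\in\Eff{\mathscr{A}}$, fix a normal state~$\varphi\colon\mathscr{B}\to\C$, and try to show that $\varphi(f(a_\beta))\to\varphi(f(a))$.

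The key step is the triangle inequality
\begin{equation*}
|\varphi(f(a_\beta)-f(a))|
\ \leq\  |\varphi((f-f_\alpha)(a_\beta))|
\,+\,|\varphi(f_\alpha(a_\beta)-f_\alpha(a))|
\,+\,|\varphi((f_\alpha-f)(a))|,
\end{equation*}
for an appropriately chosen~$\alpha$. Given~$\varepsilon>0$, uniform ultraweak convergence of $(f_\alpha)$ to~$f$ on~$\Eff{\mathscr{A}}$ (with respect to the seminorm $b\mapsto|\varphi(b)|$) yields an~$\alpha_0$ such that $|\varphi((f-f_{\alpha_0})(b))|<\nicefrac{\varepsilon}{3}$ for \emph{every} $b\in\Eff{\mathscr{A}}$; this uniformity in~$b$ is what handles both the first and third terms in one stroke (in particular, bounding the first term uniformly in~$\beta$). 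For the middle term, since~$f_{\alpha_0}$ is normal and positive, Lemma~\ref{lem:normal-uw} tells us $f_{\alpha_0}$ is ultraweakly continuous, so $\varphi\circ f_{\alpha_0}$ is continuous along~$(a_\beta)$, and hence the middle term is eventually less than~$\nicefrac{\varepsilon}{3}$. Thus eventually $|\varphi(f(a_\beta)-f(a))|<\varepsilon$, as desired.

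There is really no serious obstacle here; the argument is the usual ``uniform limit of continuous functions is continuous,'' transcribed into the ultraweak setting via the seminorm~$|\varphi(-)|$. The only subtle point worth flagging is that the uniformity of the convergence on~$\Eff{\mathscr{A}}$ (rather than merely coordinatewise convergence) is essential: without it, one could not bound $|\varphi((f-f_{\alpha_0})(a_\beta))|$ uniformly in~$\beta$, and the middle term's smallness (depending on~$\beta$) would be useless. This is also why the corollary is stated with uniform convergence, and why in the proof of Proposition~\ref{prop:compression} the author was careful to note that the~$N$ in Inequality~\eqref{eq:bound-qfbq} is independent of~$b$.
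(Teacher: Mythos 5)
Your proposal is correct and follows exactly the paper's route: the paper also reduces via Lemma~\ref{lem:normal-uw} to ultraweak continuity of the restriction of~$f$ to~$\Eff{\mathscr{A}}$ and then appeals to ``the uniform limit of continuous functions is continuous,'' which is precisely the $\nicefrac{\varepsilon}{3}$ argument you spell out. You have merely made explicit the step the paper leaves implicit, and your remark on why uniformity (rather than coordinatewise convergence) is essential matches the paper's own care on this point in the proof of Proposition~\ref{prop:compression}.
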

\begin{proof}
        The uniform limit of continuous functions is continuous.
        In particular,
        since the~$f_\alpha$
        (being normal and hence ultraweakly continuous)
        converge uniformly on~$\Eff{\mathscr{A}}$
        to~$f$,
        we see that 
        the restriction of~$f$ to~$\Eff{\mathscr{A}}$
        is ultraweakly continuous,
        and thus~$f$ is normal by Lemma~\ref{lem:normal-uw}.
\end{proof}

\begin{lem}
        \label{lem:weak-lim-cp}
        Let~$\mathscr{B}$
        be a $C^*$-algebra
        of operators on a Hilbert space~$\mathscr{H}$.
        Let~$\mathscr{A}$
        be a $C^*$-algebra.
        Let~$(f_\alpha)_{\alpha\in D}$
        be a net of completely positive linear 
        maps from~$\mathscr{A}$ to~$\mathscr{B}$
        which converges pointwise weakly
        to a linear map~$f\colon \mathscr{A}\to\mathscr{B}$.
        Then~$f$ is completely positive.
\end{lem}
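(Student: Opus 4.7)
The plan is to reduce the complete positivity of $f$ to a scalar inequality that survives the pointwise weak limit. Fix $N \in \mathbb{N}$ and a positive matrix $A = (A_{ij})_{ij}\in M_N(\mathscr{A})$; the goal is to show that the matrix $F \equiv (f(A_{ij}))_{ij}$ is positive in $M_N(\mathscr{B})$. Since $\mathscr{B}$ is a $C^*$-subalgebra of $\mathscr{B}(\mathscr{H})$, we have $M_N(\mathscr{B}) \subseteq \mathscr{B}(\mathscr{H}^{\oplus N})$, and by Corollary~\ref{cor:order-norm-permanence} positivity of $F$ in $M_N(\mathscr{B})$ is equivalent to positivity of $F$ in $\mathscr{B}(\mathscr{H}^{\oplus N})$. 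By Example~\ref{exa:BH}, this amounts to showing
\begin{equation*}
\sum_{i,j=1}^N \left\langle x_i,\, f(A_{ij})\, x_j\right\rangle \ \geq\ 0
\end{equation*}
for every tuple $(x_1,\dots,x_N)\in\mathscr{H}^{\oplus N}$.

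Next I would upgrade the weak convergence from a diagonal statement to a sesquilinear one. By definition $(f_\alpha(a))_\alpha$ converges weakly to $f(a)$ for each $a$, which means $\langle f_\alpha(a)x,x\rangle\to\langle f(a)x,x\rangle$ for all $x\in\mathscr{H}$. By the standard polarization identity
\begin{equation*}
4\left\langle Tx,y\right\rangle \ =\ \sum_{k=0}^{3} i^k \left\langle T(x+i^k y),\, x+i^k y\right\rangle,
\end{equation*}
this forces $\langle f_\alpha(a)y, x\rangle \to \langle f(a)y,x\rangle$ for all $x,y\in \mathscr{H}$ and all $a\in\mathscr{A}$. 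Applied to each entry $A_{ij}$ of $A$ with the pair $(x_i, x_j)$, this gives termwise convergence of the finite sum
\begin{equation*}
\sum_{i,j=1}^N\left\langle f_\alpha(A_{ij})x_j,\, x_i\right\rangle
\ \longrightarrow\
\sum_{i,j=1}^N\left\langle f(A_{ij})x_j,\, x_i\right\rangle.
\end{equation*}

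Finally, since each $f_\alpha$ is completely positive and hence $N$-positive, the matrix $(f_\alpha(A_{ij}))_{ij}$ is positive in $M_N(\mathscr{B})\subseteq \mathscr{B}(\mathscr{H}^{\oplus N})$, so each term in the net on the left-hand side is non-negative; the limit is therefore non-negative as well, which is exactly the desired inequality. Hence $F$ is positive, $f$ is $N$-positive for every $N$, and so $f$ is completely positive. The only mildly non-routine point is the polarization step — one has to remember that the weak topology on $\mathscr{B}(\mathscr{H})$ as defined here, using only diagonal matrix elements, already determines all off-diagonal ones; beyond that the argument is just a passage to the limit inside a finite sum of scalars.
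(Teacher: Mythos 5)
Your proof is correct and follows essentially the same route as the paper's: reduce positivity of $(f(A_{ij}))_{ij}$ in $\mathscr{B}(\mathscr{H}^{\oplus N})$ to the non-negativity of a finite sum of scalar matrix coefficients, each of which converges by pointwise weak convergence of the $f_\alpha$. Your explicit polarization step is a nice touch the paper glosses over (its definition of weak convergence only mentions diagonal coefficients $\left<a_ix,x\right>$, yet its proof uses convergence of $\left<\,\cdot\,x_j,y_i\right>$ for distinct vectors), but the substance of the argument is identical.
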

\begin{proof}
        Let~$A$
        be a positive $N\times N$-matrix
        over~$\mathscr{A}$
	for some~$N\in\N$.
        We must show that
        $(f(A_{ij}))_{ij}$
        is a positive $N\times N$-matrix over~$\mathscr{B}$.
        Note that the~$N\times N$-matrices over~$\mathscr{B}$
        can be considered a $C^*$-subalgebra
        of operators on~$\mathscr{H}^{\oplus N}$.
        To prove that~$(f(A_{ij}))_{ij}$ is positive,
        we will show that~$(f_\alpha(A_{ij}))_{ij}$
        converges to~$(f(A_{ij}))_{ij}$ weakly
	with respect to~$\mathscr{H}^{\oplus N}$.
        (This is sufficient, because
	the weak limit of positive operators is positive,
	and
        each~$(f_\alpha(A_{ij}))_{ij}$ is positive.)

        Let~$x,y\in \mathscr{H}^{\oplus N}$
        be given.
        To show that $(f_\alpha(A_{ij}))_{ij}$
        converges to~$(f(A_{ij}))_{ij}$ in the weak operator topology
        we must show that
	\begin{equation}
                \label{eq:weak-limit-cp-1} 
        \begin{alignedat}{3}
		&
                \left<\  (f(A_{ij})-f_\alpha(A_{ij}) )_{ij}\ x,\ 
                   y\ \right> \\
            & \qquad
                   \ \equiv\ 
                   {\textstyle \sum_{i,j} }\left<\ 
                   (f(A_{ij})-f_\alpha(A_{ij}))\ x_j,\ 
                   y_i\ \right>
        \end{alignedat}
	\end{equation}
        converges to~$0$ as~$\alpha\rightarrow \infty$.
        Let~$i,j\in\{1,\dotsc,N\}$ be given. Since $f_\alpha$
        converges pointwise in the weak operator topology to~$f$,
        $\left<(f_\alpha(A_{ij})-f(A_{ij}))x_j,y_i\right>$ converges in~$\C$
        to~$0$.
        Thus the right-hand side of Equality~\eqref{eq:weak-limit-cp-1},
        being a finite sum of such terms, 
        converges to~$0$ as~$\alpha\to\infty$.
        Thus~$f$ is completely positive.
\end{proof}
\begin{cor}
        \label{cor:uw-lim-cp}
        Let~$\mathscr{A}$ and~$\mathscr{B}$
	be von Neumann algebras.
        Let~$(f_\alpha)_{\alpha\in D}$
        be a net of completely positive linear maps
        from~$\mathscr{A}$ to~$\mathscr{B}$
        which converges pointwise ultraweakly
        to a linear map $f\colon\mathscr{A}\to\mathscr{B}$.
        Then~$f$ is completely positive.
\end{cor}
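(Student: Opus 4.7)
The plan is to reduce this to Lemma~\ref{lem:weak-lim-cp}, which gives the analogous statement for nets of completely positive maps into a $C^*$-algebra of operators on a Hilbert space that converge pointwise in the weak operator topology. The only gap between the two statements is that Corollary~\ref{cor:uw-lim-cp} is phrased in terms of ultraweak convergence into an abstract von Neumann algebra~$\mathscr{B}$, whereas Lemma~\ref{lem:weak-lim-cp} requires weak convergence into a concrete algebra of bounded operators.

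First I would invoke Theorem~\ref{thm:KGNS} to realise $\mathscr{B}$, without loss of generality, as a von Neumann subalgebra of~$\mathscr{B}(\mathscr{H})$ for some Hilbert space~$\mathscr{H}$ chosen so that the ultraweak topology on~$\mathscr{B}$ coincides with the weak topology induced by~$\mathscr{H}$. Under this identification, pointwise ultraweak convergence of the net $(f_\alpha)$ to~$f$ is exactly pointwise weak convergence of $(f_\alpha)$ to~$f$ when the $f_\alpha$ and $f$ are regarded as maps $\mathscr{A}\to\mathscr{B}\subseteq\mathscr{B}(\mathscr{H})$.

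Next I would apply Lemma~\ref{lem:weak-lim-cp} to the net $(f_\alpha)$ (which are still completely positive as maps into $\mathscr{B}(\mathscr{H})$, since complete positivity is preserved under the inclusion $\mathscr{B}\hookrightarrow\mathscr{B}(\mathscr{H})$) to conclude that $f$ is completely positive as a linear map $\mathscr{A}\to\mathscr{B}(\mathscr{H})$. Since $f$ takes values in $\mathscr{B}$, and positivity of an $N\times N$-matrix over~$\mathscr{B}$ is the same as positivity in~$M_N(\mathscr{B}(\mathscr{H}))$ (by Corollary~\ref{cor:order-norm-permanence} applied to the $C^*$-subalgebra $M_N(\mathscr{B})$ of $M_N(\mathscr{B}(\mathscr{H}))$), this means $f$ is completely positive as a map $\mathscr{A}\to\mathscr{B}$.

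There is no real obstacle here; the content is entirely encapsulated by Lemma~\ref{lem:weak-lim-cp}, and the only delicate point is the passage between the abstract ultraweak topology and the concrete weak operator topology, which is supplied verbatim by Theorem~\ref{thm:KGNS}.
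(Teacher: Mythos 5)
Your proposal is correct and is exactly the argument the paper intends: the corollary is stated without proof immediately after Lemma~\ref{lem:weak-lim-cp}, and the intended reduction is precisely to represent $\mathscr{B}$ via Theorem~\ref{thm:KGNS} so that ultraweak convergence becomes weak convergence and the lemma applies. Your additional remark that positivity in $M_N(\mathscr{B})$ agrees with positivity in $M_N(\mathscr{B}(\mathscr{H}))$ is a detail the paper leaves implicit, and it is handled correctly.
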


\section{Cauchy--Schwarz for~$2$-Positive Maps}
The classical form of the Cauchy--Schwarz inequality
is 
that for any vectors~$x$ and~$y$ in 
a complex vector space~$\mathscr{X}$
with semi-inner product~$\left<-,-\right>$
we have
\begin{equation*}
        \left|\left<x,y\right>\right|^2\ 
        \leq\ \left<x,x\right>\,\left<y,y\right>.
\end{equation*}
Since
any positive functional~$\varphi$
on a $C^*$-algebra~$\mathscr{A}$
gives a semi-inner product
on~$\mathscr{A}$  by~$\left<a,b\right>=\varphi(a^*b)$,
\begin{equation}
        \left| \varphi(a^*b)\right|^2 
                \ \leq\ \varphi(a^*a)\, \varphi(b^*b).\label{eq:kadineq}
\end{equation}
This is known as \emph{Kadison's inequality}.
We need the following generalization.
Given a $2$-positive linear map~$\varphi\colon \mathscr{A}\to\mathscr{B}$
we have,
for all~$a,b\in \mathscr{A}$,
\begin{equation}
        \label{eq:cs-teaser}
        \|\varphi(a^*b)\|^2\ \leq\ 
        \|\varphi(a^*a)\|\, \|\varphi(b^*b)\|.
\end{equation}
Since it
is an exercise in~\cite{paulsen2002}
and seems not to be mentioned elsewhere
we have included a proof
of Inequality~\eqref{eq:cs-teaser}
in this subsection (see Theorem~\ref{thm:cs}).

Recall that a linear map~$\varphi\colon \mathscr{A}\to\mathscr{B}$
is $2$-positive
whenever $\smash{\bigl[\begin{smallmatrix}\varphi(a) & \varphi(b) \\
\varphi(c) & \varphi(d)
\end{smallmatrix}\bigr]}$
is positive
for every
positive matrix
 $\left[\begin{smallmatrix}a & b \\
c & d
\end{smallmatrix}\right]$
with~$a,b,c,d\in\mathscr{A}$.
The trick behind the proof of Inequality~\eqref{eq:cs-teaser}
is to analyze
which $2\times 2$ matrices of operators
on a Hilbert space are positive
(see Lemma~\ref{lem:2-times-2-pos}).
Let us first recall the situation
for $2\times 2$-matrices over~$\C$.
\begin{lem}
        \label{lem:2-times-2-pos-C}
        Let~$T\equiv 
        \left[\begin{smallmatrix}
                        p & a \\
                        a^* & q 
        \end{smallmatrix}\right]$
        be a self-adjoint $2\times 2$ matrix over~$\C$.
        The following are equivalent.
        \begin{enumerate}
                \item $T$ is positive;
                \item $T$ has positive eigenvalues;
                \item $T$ has positive determinant and positive trace;
                \item 
                        $p,q\geq 0$ and
                        $|a|^2\leq pq$.
        \end{enumerate}
\end{lem}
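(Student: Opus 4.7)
The plan is to establish the three equivalences (1~$\Leftrightarrow$~2), (2~$\Leftrightarrow$~3), (3~$\Leftrightarrow$~4) in turn. Everything is finite-dimensional linear algebra over~$\C$, so the main tool is the spectral theorem for self-adjoint matrices together with the identifications
\begin{equation*}
\operatorname{tr} T = p + q, \qquad \det T = pq - |a|^2
\end{equation*}
for $T = \left[\begin{smallmatrix} p & a \\ a^* & q\end{smallmatrix}\right]$.

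For (1~$\Leftrightarrow$~2) I would invoke the spectral theorem: a self-adjoint operator on a finite-dimensional Hilbert space is unitarily diagonalizable, so $T = U D U^*$ with $D$ real diagonal, and positivity of $T$ is equivalent to positivity of the diagonal entries of $D$, which are precisely the eigenvalues. (Alternatively one can appeal directly to the fact that a self-adjoint element of any $C^*$-algebra is positive iff its spectrum is contained in $[0,\infty)$; for a matrix this spectrum is the set of eigenvalues.)

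For (2~$\Leftrightarrow$~3) I would use that the characteristic polynomial factors as $(\lambda - \lambda_1)(\lambda - \lambda_2)$, so $\operatorname{tr} T = \lambda_1 + \lambda_2$ and $\det T = \lambda_1 \lambda_2$. Thus $\lambda_1,\lambda_2 \geq 0$ plainly gives positive trace and determinant. Conversely, if the trace and determinant are both $\geq 0$, then $\lambda_1,\lambda_2$ are real (since $T$ is self-adjoint) with non-negative sum and product, which forces both to be $\geq 0$ (if one were strictly negative, the product being $\geq 0$ would force the other to be $\leq 0$, contradicting the sum being $\geq 0$).

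Finally for (3~$\Leftrightarrow$~4) I would just compute. If $p,q \geq 0$ and $|a|^2 \leq pq$, then $\operatorname{tr} T = p+q \geq 0$ and $\det T = pq - |a|^2 \geq 0$. Conversely, $\det T \geq 0$ gives $|a|^2 \leq pq$ immediately, and in particular $pq \geq 0$; combined with $p + q \geq 0$, the same sign argument as in the previous paragraph yields $p,q \geq 0$. I expect no real obstacle: the entire lemma is essentially the statement that for a $2\times 2$ self-adjoint scalar matrix the principal minors test for positivity holds, and at worst one has to be a little careful with the sign-chasing in the last step.
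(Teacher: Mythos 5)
Your argument is correct and complete: the chain (1~$\Leftrightarrow$~2) via the spectral theorem, (2~$\Leftrightarrow$~3) via $\tr T = \lambda_1+\lambda_2$ and $\det T = \lambda_1\lambda_2$ with the sign-chasing for real eigenvalues, and (3~$\Leftrightarrow$~4) via $\tr T = p+q$ and $\det T = pq - |a|^2$ all go through, with the needed facts (that $p,q$ and the eigenvalues are real) guaranteed by self-adjointness. The paper itself offers no proof here --- it states ``We leave this to the reader'' --- so there is nothing to compare against; your write-up is exactly the standard argument the authors presumably had in mind.
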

\begin{proof}
We leave this to the reader.
\end{proof}
\begin{lem}
        \label{lem:2-times-2-pos}
        Let~$T\equiv
        \left[
        \begin{smallmatrix}
                P & A \\
                A^* & Q
        \end{smallmatrix}\right]$
         be a self-adjoint $2\times 2$ matrix
        of bounded operators on a Hilbert space~$\mathscr{H}$.
        The following are equivalent.
        \begin{enumerate}
                \item 
                        \label{lem:2-times-2-pos-1}
                        $T$ is positive.
                \item 
                        \label{lem:2-times-2-pos-2}
                        $P,Q\geq 0$, and for all~$x,y\in\mathscr{H}$,
                \begin{equation}
                        \label{eq:pos-2-times-2}
                        \left|\left< Ay,x\right>\right|^2\ \leq\ 
                        \left<Px,x\right>\ \left<Qy,y\right>.
                \end{equation}
                \newcounter{enumTemp}
                \setcounter{enumTemp}{\theenumi}
        \end{enumerate}
        Moreover,
        if~$T$ is positive, then:
        \begin{enumerate}
                        \setcounter{enumi}{\theenumTemp}
                \item
                        \label{lem:2-times-2-pos-3}
                        $A^*A \leq \|P\|\, Q$
                \item
                        \label{lem:2-times-2-pos-4}
                        $AA^* \leq \|Q\|\,P$
                \item $\|A\|^2 \leq \|P\|\, \|Q\|$
                        \label{lem:2-times-2-pos-5}
        \end{enumerate}
\end{lem}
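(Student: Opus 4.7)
The plan is to reduce the whole statement to the scalar $2\times 2$ case already handled by Lemma~\ref{lem:2-times-2-pos-C}, by testing the positivity of $T$ against vectors of the form $(\alpha x,\beta y)\in\mathscr{H}\oplus\mathscr{H}$ with $\alpha,\beta\in\C$ and fixed $x,y\in\mathscr{H}$. The key identity is
$$\left<T(\alpha x,\beta y),(\alpha x,\beta y)\right> \ =\
|\alpha|^2 \left<Px,x\right> \,+\, 2\operatorname{Re}\bigl(\bar\alpha\beta\left<Ay,x\right>\bigr) \,+\, |\beta|^2 \left<Qy,y\right>,$$
which is exactly the quadratic form on $\C^2$ associated to the scalar self-adjoint matrix
$M_{x,y}\,:=\,\left[\begin{smallmatrix}\left<Px,x\right> & \left<Ay,x\right> \\ \left<A^*x,y\right> & \left<Qy,y\right>\end{smallmatrix}\right].$

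For (\ref{lem:2-times-2-pos-1}) $\Longrightarrow$ (\ref{lem:2-times-2-pos-2}), taking $\beta=0$ or $\alpha=0$ yields $P,Q\geq 0$; positivity of $T$ forces $M_{x,y}\geq 0$ for every $x,y\in\mathscr{H}$, and then Lemma~\ref{lem:2-times-2-pos-C} applied to $M_{x,y}$ delivers $|\left<Ay,x\right>|^2 \leq \left<Px,x\right>\left<Qy,y\right>$. Conversely, for (\ref{lem:2-times-2-pos-2}) $\Longrightarrow$ (\ref{lem:2-times-2-pos-1}), evaluating $\left<T\xi,\xi\right>$ for $\xi=(x,y)$ and using the inequality from (\ref{lem:2-times-2-pos-2}) together with $|\operatorname{Re}z|\leq|z|$ and the arithmetic–geometric mean inequality $\left<Px,x\right>+\left<Qy,y\right>\geq 2\sqrt{\left<Px,x\right>\left<Qy,y\right>}$ gives $\left<T\xi,\xi\right>\geq 0$.

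For (\ref{lem:2-times-2-pos-3}), fix $y\in\mathscr{H}$ and substitute $x:=Ay$ into the inequality from (\ref{lem:2-times-2-pos-2}):
$$\|Ay\|^4 \ =\ |\left<Ay,Ay\right>|^2 \ \leq\ \left<P(Ay),Ay\right>\left<Qy,y\right> \ \leq\ \|P\|\,\|Ay\|^2\,\left<Qy,y\right>,$$
and dividing by $\|Ay\|^2$ (the case $Ay=0$ being trivial since $Q\geq 0$) yields $\left<A^*Ay,y\right>\leq \|P\|\left<Qy,y\right>$, i.e., $A^*A\leq\|P\|Q$. Claim (\ref{lem:2-times-2-pos-4}) is perfectly symmetric, obtained by fixing $x$ and substituting $y:=A^*x$. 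Finally, (\ref{lem:2-times-2-pos-5}) follows from (\ref{lem:2-times-2-pos-3}) via $\|A\|^2=\|A^*A\|\leq\|\,\|P\|Q\,\|=\|P\|\,\|Q\|$.

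The main obstacle is purely bookkeeping: keeping track of complex conjugates so that the scalar quadratic form is correctly identified with that of $M_{x,y}$, and separately handling the edge case $\|Ay\|=0$ (and its dual) when dividing to extract (\ref{lem:2-times-2-pos-3}) and (\ref{lem:2-times-2-pos-4}). All real content has already been packaged into Lemma~\ref{lem:2-times-2-pos-C}.
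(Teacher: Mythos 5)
Your proposal is correct and follows essentially the same route as the paper: both reduce to the scalar matrix $M_{x,y}$ (the paper's $T'$) via the quadratic form on vectors $(\alpha x,\beta y)$, and both obtain (\ref{lem:2-times-2-pos-3}) by the substitution $x:=Ay$ with the same treatment of the $Ay=0$ edge case. The only cosmetic difference is that in the direction (\ref{lem:2-times-2-pos-2})~$\Longrightarrow$~(\ref{lem:2-times-2-pos-1}) you carry out the arithmetic--geometric mean estimate directly rather than citing Lemma~\ref{lem:2-times-2-pos-C} again, which amounts to re-proving the scalar case inline.
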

\begin{proof}
        \emph{(\ref{lem:2-times-2-pos-1}$\ 
        \ \Longrightarrow\ $\ref{lem:2-times-2-pos-2})}\ 
        Let~$x,y\in\mathscr{H}$
        be given.
        Let us consider 
        $T' := \bigl[\smash{\begin{smallmatrix}
                        \left<Px,x\right>&
                        \left<Ay,x\right>\\
                        \left<A^*x,y\right>&
                        \left<Qy,y\right>
        \end{smallmatrix}}\bigr]$.
        Since~$T$ is self-adjoint,
        $T'$ is self-adjoint.
        Further,
        given~$\lambda,\mu\in\C$
        we have
        \begin{equation*}
                \label{eq:pos-2-times-2-1}
                \Bigl<
                \begin{bmatrix}
                        P & A \\
                        A^*& Q
                \end{bmatrix}\!\!
                \begin{bmatrix}
                \lambda x \\ \mu y
                \end{bmatrix},
                \begin{bmatrix}
                        \lambda x \\ \mu y 
                \end{bmatrix} 
                \Bigr> 
                    =
                \Bigl<
                \begin{bmatrix}
                        \left<Px,x\right>&
                        \left<Ay,x\right>\\
                        \left<A^*x,y\right>&
                        \left<Qy,y\right>
                \end{bmatrix}\!\!
                \begin{bmatrix}
                        \lambda \\ \mu
                \end{bmatrix},
                \begin{bmatrix}
                \lambda \\ \mu
                \end{bmatrix}
                \Bigr>.
        \end{equation*}
        From this we see that
        as $T$ is positive, $T'$ is positive.
        Then by Lemma~\ref{lem:2-times-2-pos-C}
        we get~$\left<Px,x\right> \geq 0$,
        $\left<Qy,y\right> \geq 0$,
        and
        $\left|\left<Ay,x\right>\right|^2 \leq
        \left<Px,x\right>\,\left<Qy,y\right>$.
        Hence~$P$ and~$Q$ are positive,
        and Inequality~\eqref{eq:pos-2-times-2} holds.
        \vspace{.3em}

        \noindent
        \emph{(\ref{lem:2-times-2-pos-2}\ $\Longrightarrow$
                \ref{lem:2-times-2-pos-1})}\ 
        We must show that~$T$ is positive.
        Note that~$T$ is self-adjoint
        since both~$P$ and~$Q$ are self-adjoint.
        Given~$x,y\in\mathscr{H}$
        we have
        \begin{equation*}
                \left< 
                \begin{bmatrix}
                        P & A \\
                        A^*& Q
                \end{bmatrix}
                \begin{bmatrix}
                      x \\ y
                \end{bmatrix},
                \begin{bmatrix}
                        x \\ y 
                \end{bmatrix} 
                \right> 
                = 
                \left<
                \begin{bmatrix}
                        \left<Px,x\right>&
                        \left<Ay,x\right>\\
                        \left<A^*x,y\right>&
                        \left<Qy,y\right>
                \end{bmatrix}
                \begin{bmatrix}
                        1 \\ 1
                \end{bmatrix}
                ,
                \begin{bmatrix}
                1 \\ 1
                \end{bmatrix}
                \right>.
        \end{equation*}
        So to show that~$T$ is positive,
        it suffices to show that
        $T' := \bigl[\smash{\begin{smallmatrix}
                        \left<Px,x\right>&
                        \left<Ay,x\right>\\
                        \left<A^*x,y\right>&
                        \left<Qy,y\right>
        \end{smallmatrix}}\bigr]$
        is positive.
        By Lemma~\ref{lem:2-times-2-pos-C}
        we must show that~$\left<Px,x\right> \geq 0$,
        $\left<Qy,y\right>\geq 0$,
        and $\left|\left<Ay,x\right>\right|^2 \leq \left<Px,x\right>
        \left<Qy,y\right>$.
        The latter statement is Inequality~\eqref{eq:pos-2-times-2}
        and holds by assumption.
        The other two statements follow from~$P\geq 0$ and~$Q\geq 0$.
        \vspace{.3em}

        \noindent
        \emph{(\ref{lem:2-times-2-pos-3})}\ 
        Assume that~$T$ is positive. 
        Let~$y\in\mathscr{H}$ be given. We must show that
        \begin{equation}
                \label{eq:2-times-2-pos-3}
                \left<A^*Ay,y\right>\ \leq\ \|P\|\,\left<Qy,y\right>.
        \end{equation}
        Note that $0\leq \left<A^*Ay,y\right> = \left<Ay,Ay\right>
        = \left|\left<Ay,Ay\right>\right|$.
	So
        \begin{align*}
                \left|\left<Ay,Ay\right>\right|^2\ &\leq\ \left<PAy,Ay\right>\,
                        \left<Qy,y\right>\qquad \\
                        &\qquad\qquad \text{by Ineq.~\eqref{eq:pos-2-times-2} with~$x=Ay$}\\
                        &\leq \ \|P\|\left<Ay,Ay\right>\,
                        \left<Qy,y\right>\qquad \\
                        & \qquad\qquad \text{since $P\leq \|P\|$ and $0\leq Q$}.
        \end{align*}
        So either~$\left<A^*Ay,y\right>=0$ --- in 
        which case Inequality~\eqref{eq:2-times-2-pos-3}
        holds trivially --- or~$\left<A^*Ay,y\right>\neq 0$
        in which case we get
        \begin{equation*}
        \left<A^*Ay,y\right>=\left<Ay,Ay\right>\ 
        \leq\ \|P\|\left<Qy,y\right>.
        \end{equation*}
        Thus~$A^*A \leq \|P\|\,Q$.
        \vspace{.3em}

        \noindent
        \emph{(\ref{lem:2-times-2-pos-4})}\ 
        follows by a similar reasoning as in~\ref{lem:2-times-2-pos-3}.
        \vspace{.3em}

        \noindent
        \emph{(\ref{lem:2-times-2-pos-5})}\ 
	We have
        $\|A\|^2=\|A^*A\|\leq \|\,\|P\|Q\,\|=\|P\|\,\|Q\|$
        since~$A^*A\leq \|P\|Q$ by~\ref{lem:2-times-2-pos-3}.
\end{proof}
\begin{thm}[Cauchy--Schwarz for~$2$-positive maps]$\,$ 
        \label{thm:cs}
        Let~$f\colon \mathscr{A}\to\mathscr{B}$
        be a $2$-positive map
        between $C^*$-algebras.
        Then we have, for all~$a,b\in \mathscr{A}$:
        \begin{enumerate}
                \item 
                        $f(b^*a)\,f(a^*b)\ \leq\ \|f(a^*a)\|\,f(b^*b)$
                \item
                        $f(a^*b)\,f(b^*a)\ \leq\ \|f(b^*b)\|\,f(a^*a)$
                \item
                        $\|f(a^*b)\|^2 \ \leq\ \|f(a^*a)\|\,\|f(b^*b)\|$
        \end{enumerate}
\end{thm}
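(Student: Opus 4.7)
The plan is to reduce the three stated inequalities to Lemma~\ref{lem:2-times-2-pos} by transporting a naturally positive $2\times 2$ matrix through~$f$. Concretely, given $a,b\in\mathscr{A}$, set
\[
  X \ :=\ \begin{pmatrix} a & b \\ 0 & 0 \end{pmatrix} \ \in \ M_2(\mathscr{A}),
\]
so that a direct multiplication yields
\[
  X^*X \ =\ \begin{pmatrix} a^*a & a^*b \\ b^*a & b^*b \end{pmatrix},
\]
which is therefore positive in $M_2(\mathscr{A})$. Applying $f$ coordinatewise and using $2$-positivity, the matrix
\[
  M \ :=\ \begin{pmatrix} f(a^*a) & f(a^*b) \\ f(b^*a) & f(b^*b) \end{pmatrix}
\]
is positive in $M_2(\mathscr{B})$.

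Before invoking Lemma~\ref{lem:2-times-2-pos} I need $M$ to be in the form $\bigl[\begin{smallmatrix}P & A \\ A^* & Q\end{smallmatrix}\bigr]$. For this I would observe that any positive linear map between $C^*$-algebras is $*$-preserving: every self-adjoint element $x$ decomposes as $x = x_+ - x_-$ with $x_\pm \geq 0$ (by functional calculus in the commutative $C^*$-subalgebra generated by $x$), so $f(x)$ is self-adjoint, and then $*$-preservation follows by splitting any element into real and imaginary parts. Hence $f(b^*a) = f((a^*b)^*) = f(a^*b)^*$, and $M$ has the required shape with $P=f(a^*a)$, $Q=f(b^*b)$, $A=f(a^*b)$.

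Next, to apply Lemma~\ref{lem:2-times-2-pos} I would realise $\mathscr{B}$ as a $C^*$-algebra of bounded operators on some Hilbert space~$\mathscr{H}$ via the Gel'fand--Neumark--Segal theorem (Theorem~\ref{thm:gns}), unitising $\mathscr{B}$ first if necessary. Then $M_2(\mathscr{B})$ sits isometrically and order-faithfully inside $\mathscr{B}(\mathscr{H}\oplus\mathscr{H})$ (cf.~Corollary~\ref{cor:order-norm-permanence}), so the positivity of $M$ in $M_2(\mathscr{B})$ is equivalently its positivity as an operator on $\mathscr{H}\oplus\mathscr{H}$. Parts~(\ref{lem:2-times-2-pos-3}), (\ref{lem:2-times-2-pos-4}), (\ref{lem:2-times-2-pos-5}) of Lemma~\ref{lem:2-times-2-pos} then read off the three inequalities verbatim: part~(\ref{lem:2-times-2-pos-3}) gives $A^*A = f(b^*a)f(a^*b) \leq \|P\|Q$, which is~(1); part~(\ref{lem:2-times-2-pos-4}) gives $AA^* = f(a^*b)f(b^*a) \leq \|Q\|P$, which is~(2); and part~(\ref{lem:2-times-2-pos-5}) gives $\|A\|^2 \leq \|P\|\|Q\|$, which is~(3).

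There is no real obstacle here: all the nontrivial analytic work has been absorbed into Lemma~\ref{lem:2-times-2-pos}, and the only point requiring a moment's care is the bookkeeping that lets Lemma~\ref{lem:2-times-2-pos} apply, namely the passage to a Hilbert-space representation and the identification $f(b^*a)=f(a^*b)^*$. Once those are in place, the theorem is an immediate translation of the lemma across the $2$-positive map.
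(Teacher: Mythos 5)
Your proposal is correct and follows essentially the same route as the paper: factor $\bigl[\begin{smallmatrix}a^*a & a^*b\\ b^*a & b^*b\end{smallmatrix}\bigr]$ as $X^*X$, push it through $f$ by $2$-positivity, represent $\mathscr{B}$ on a Hilbert space, and read off the three inequalities from parts (\ref{lem:2-times-2-pos-3})--(\ref{lem:2-times-2-pos-5}) of Lemma~\ref{lem:2-times-2-pos}. Your extra remark that $f$ is $*$-preserving is harmless but not strictly needed, since positivity of the image matrix already forces it to be self-adjoint and hence of the form $\bigl[\begin{smallmatrix}P & A\\ A^* & Q\end{smallmatrix}\bigr]$.
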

\begin{proof}
        We may assume that~$\mathscr{B}$
        is a $C^*$-subalgebra of the space of 
        bounded linear operators~$\mathscr{B}(\mathscr{H})$
        on Hilbert space~$\mathscr{H}$.
        Since~$
        \bigl[\begin{smallmatrix}
                        a^*a & a^*b \\
        b^*a & b^*b \end{smallmatrix}\bigr]
                        \equiv 
        \bigl[\begin{smallmatrix}
                        a^* & 0 \\
        b^* & 0 \end{smallmatrix}\bigr]
                        \,
        \bigl[\begin{smallmatrix}
                        a & b \\
        0 & 0 \end{smallmatrix}\bigr]$
                        is positive 
        and~$f$ is $2$-positive
        we get that
        $T:=\bigl[\smash{\begin{smallmatrix}
                        f(a^*a) & f(a^*b) \\
        f(b^*a) & f(b^*b) \end{smallmatrix}}\bigr]$
                        is positive in~$M_2(\mathscr{B})$,
                        and thus~$T$ is positive 
                        in~$M_2(\mathscr{B(\mathscr{H})})$.

                        Now apply Lemma~\ref{lem:2-times-2-pos}.
\end{proof}

\end{document}